\def\le{\leqslant}
\def\ge{\geqslant}
\def\leq{\leqslant}
\def\geq{\geqslant}
\def\Case#1#2{%
\smallskip\paragraph{\textbf{\boldmath Case #1: #2}}\hfil\break\ignorespaces}
\newtheorem{theorem}{Theorem}
\newtheorem{corollary}[theorem]{Corollary}
\newtheorem{lemma}[theorem]{Lemma}
\newtheorem{prop}[theorem]{Proposition}
\newtheorem{remark}[theorem]{\bf Remark}
\renewcommand{\Im}{\operatorname{Im}}
\newcommand{\ord}{\operatorname{ord}}
\newcommand{\Vol}{\operatorname{Vol}}
\newcommand\cL{\mathcal{L}}
\newcommand\cF{\mathcal{F}}
\newcommand\cW{\mathcal{W}}
\newcommand{\Z}{\mathbb{Z}}
\newcommand{\Q}{\mathbb{Q}}
\newcommand{\R}{\mathbb{R}}
\newcommand{\N}{\mathbb{N}}
\newcommand{\e}{\operatorname{e}}
\newcommand{\sumthree}{\operatorname*{\sum\sum\sum}}
\numberwithin{equation}{section}
\numberwithin{theorem}{section}
\newcommand{\QQ}{\mathbb{Q}}
\newcommand{\Fq}{\mathbb{F}_q}
\def\cA{{\mathcal A}}
\def\cF{{\mathcal F}}
\def\cG{{\mathcal G}}
\def\cI{{\mathcal I}}
\def\cJ{{\mathcal J}}
\def\cL{{\mathcal L}}
\def\cM{{\mathcal M}}
\def\cP{{\mathcal P}}
\def\cQ{{\mathcal Q}}
\def\cR{{\mathcal R}}
\def\cS{{\mathcal S}}
\def\cU{{\mathcal U}}
\def\cV{{\mathcal V}}
\def\cW{{\mathcal W}}
\def\cZ{{\mathcal Z}}
\def\newsim{\approx} 
\def\sfB{\mathsf {B}}
\def \balpha{\bm{\alpha}}
\def \bbeta{\bm{\beta}}
\def\ov\QQ{\overline{\QQ}}
\def\newY{L}
\def\e{\mathbf{e}}
\def\eq{{\mathbf{\,e}}_q}
\def\em{{\mathbf{\,e}}_m}
\def\mand{\qquad\mbox{and}\qquad}
\def\({\left(}
\def\){\right)}
\def\fl#1{\left\lfloor#1\right\rfloor}
\def\rf#1{\left\lceil#1\right\rceil}
\begin{document}

\title[Modular square roots of primes] 
{Bilinear forms in Weyl sums for modular square roots and applications}

\dedicatory{\it Dedicated to Bruce Berndt and his penchant for Gauss sums, \\ on the occasion of his 80th birthday.}

 \author[A. Dunn]{Alexander Dunn} 
 \address{A.D.: Department of Mathematics, University of Illinois at
Urbana-Cham\-paign  1409 West Green Street, Urbana, IL 61801, USA}
 \email{ajdunn2@illinois.edu}
 \author[B.\ Kerr]{Bryce Kerr}
\address{B.K.: Department of Mathematics and Statistics,
University of Tur\-ku,  FI-20014, Finland}
\email{bryce.kerr@utu.fi}
 \author[I.~E.~Shparlinski]{Igor E. Shparlinski}
 \address{I.E.S.: School of Mathematics and Statistics, University of New South Wales.
 Sydney, NSW 2052, Australia}
 \email{igor.shparlinski@unsw.edu.au}
 
 \author[A. Zaharescu]{Alexandru Zaharescu} 
 \address{A.Z.: Department of Mathematics, University of Illinois at
Urbana-Cham\-paign  1409 West Green Street, Urbana, IL 61801, USA and 
Simon Stoilow Institute of Mathematics of the Romanian Academy, P.O. Box 1-764, RO-014700 Bucharest, Romania}
 \email{zaharesc@illinois.edu}
 
 \begin{abstract}  
Let $q$ be a prime, $P \geq 1$ and let $N_q(P)$ denote the number of rational primes $p \leq P$ that split in the imaginary quadratic field
$\mathbb{Q}(\sqrt{-q})$. The first part of this paper establishes various unconditional and conditional (under existence of a Siegel zero) lower bounds for
$N_q(P)$ in the range $q^{1/4+\varepsilon} \leq P \leq q$, for any fixed $\varepsilon>0$. This improves upon what is implied by work of Pollack and Benli--Pollack.

The second part of this paper is dedicated to proving an estimate for a bilinear form involving Weyl sums for modular square roots (equivalently
Sali\'{e} sums). Our estimate has a power saving in the  so-called P{\'o}lya--Vinogradov range, and our methods involve studying an additive energy coming from 
quadratic residues in $\mathbb{F}_q$. 

This bilinear form is inspired by the recent \textit{automorphic} motivation: the second moment for twisted $L$-functions
attached to Kohnen newforms has  recently been computed by the first and fourth authors. So the third part 
of this paper links the above two directions together and outlines 
the \textit{arithmetic} applications of this bilinear form. These include the equidistribution of quadratic roots of primes,
products of primes, and relaxations of a conjecture of Erd{\H o}s--Odlyzko--S{\'a}rk{\"o}zy.
\end{abstract}

\keywords{prime quadratic residues, modular square roots, discrepancy, bilinear sums, additive energy, sieve, binary forms}
\subjclass[2010]{11A15, 11E25, 11K38, 11L07, 11L20, 11N32}
  
\maketitle
\tableofcontents
\section{Introduction}

\subsection{Motivation and description of our results} \label{sec:motiv}
Our motivation begins in the early 20th century. This is when I.~M.~Vinogradov initiated the study of the distribution of both quadratic residues and non-residues modulo a prime $q$.
This remains a central theme in classical analytic number theory. 

Several fundamental conjectures are still unresolved. Vinogradov's least quadratic
 non-residue conjecture asserts that
\begin{equation} \label{nonresi}
n_q \le q^{o(1)},
\end{equation}
where $n_q$ denotes the least non-quadratic residue modulo $q$ ($n_q$ is necessarily prime). The best unconditional bounds
known are
$$
n_q \le q^{1/(4 \sqrt{e})+o(1)},
$$
largely due to Burgess' bounds for character sums~\cite{Bur} and ideas of Vinogradov.

It is also natural to consider the least prime quadratic residue $r_q$ modulo $q$. The best 
unconditional result is due to Linnik and  Vinogradov~\cite{LV}, where they showed 
\begin{equation} \label{resp}
r_q \le q^{1/4+o(1)}.
\end{equation}
Much of the above discussion can be generalised to prime residues and non-residues 
of an arbitrary Dirichlet character $\chi$ of order $k$. One can see the work of Norton~\cite{Nor} for the analogue 
of~\eqref{nonresi} and of Elliot~\cite{Ell} for the analogue of~\eqref{resp}.

Given the existence of a small prime quadratic residue and non-residue, the next natural question to ask is how many such quadratic residues and non-residues  exist in a given interval $[2,P]$.  Making use of reciprocity relations and the sieve, Benli and Pollack~\cite{BePo} have results in this direction 
for quadratic, cubic and biquadratic residues. 
For general Dirichlet characters
one can see work of Pollack~\cite{Po2} and also a more recent work of Benli~\cite{Ben}.
The impact and links of results of this type stretch far beyond analytic number theory. 
For example, Bourgain and Lindenstrauss~\cite[Theorem~5.1]{BoLi} are motivated by links with the {\it Arithmetic Quantum Unique Ergodicity Conjecture\/}. 
They have shown that for any $\varepsilon > 0$, there exists some $\delta>0$, 
such that for any sufficiently large $D$,
the set 
$$
\cR: = \left\{p~\text{prime}:~ D^\delta \le p \le D^{1/4+\varepsilon}, \ \(\frac{D}{p}\) = -1\right\}
$$
satisfies
$$
\sum_{p\in \cR} \frac{1}{p} \ge \frac{1}{2} - \varepsilon.
$$
 When one relaxes the condition that a non-residue be prime, one does much better, 
as in the case of Banks, Garaev, Heath-Brown and Shparlinski~\cite{BGHBS}, who
 show that for each $\varepsilon>0$ and $N \geq q^{1/4\sqrt{e}+\varepsilon}$, 
 the proportion of quadratic non-residues modulo $q$ in the interval $[1,N]$ is bounded away from $0$ when $q$ is large enough. 

 Furthermore, if $q \equiv 3 \pmod{4}$ is a large prime, then quadratic reciprocity tells us that the following two 
 Legendre symbols
are equal:  $(-q/p)=(p/q)$. Thus counting 
quadratic residues modulo $q$ is equivalent to counting small rational primes $p \leq P$ that split in the imaginary quadratic field $F:=\mathbb{Q}(\sqrt{-q})$. In particular, throughout this paper, 
$$
\chi(\cdot):=\( \frac{-q}{\cdot}\)
$$
defined via the Jacobi symbol, 
always denotes the character attached to $F$, and  $L(s,\mathcal{\chi})$ denotes the Dirichlet $L$-function attached 
to $\chi$. Let $N_q(P)$ denote the number of rational primes $p\le P$ that split in $F$. 

A good reference point for the strength of our results is what the Generalised Riemann Hypothesis (GRH) implies. It is well known that under the GRH for $L(s,\chi)$
that we have   $N_q(P)  \ge c_1 P/\log P$ for $P \ge c_2 (\log q)^2$ for some absolute constants $c_1, c_2>0$, see, 
for example,~\cite[Section~13.1, Excercise~5(a)]{MoVa}.

 Furthermore, a result of Heath-Brown~\cite[Theorem~1]{HB3} immediately implies the following.  
For any fixed $\varepsilon >0$, for all but $o(Q/\log Q)$ 
primes $q \le [Q, 2Q]$, 
we have   $N_q(P) = (1/2+ o(1)) P/\log P$ for $P\ge q^\varepsilon$, as $Q \to \infty$.

The second theme which we develop in this paper concerns bounds of certain bilinear sums closely related 
to correlations between values of {\it  Sali\'{e} sums}
\begin{equation} \label{eq:Sal}
S(m,n;q)  = \sum_{x \in \mathbb{F}_q} \(\frac{x}{q} \) \e_q(mx+n \overline{x}), 
\end{equation} 
 see~\cite{Sal}. 
 We  {\it emphasise\/}
that this is closely related to recent 
work of the first and fourth authors~\cite{DuZa}, 
 who have computed a second moment for $L$-functions 
attached to a half-integral weight Kohnen newform,
averaged over all primitive characters modulo a prime. 
Power savings in the error term for such a moment come in part 
 from savings in the bound on correlations 
between Sali\'{e} sums~\eqref{eq:Sal}. Our argument gives a direct improvement of~\cite[Theorem~1.2]{DuZa},
see Appendix~\ref{app:C}. 
It remains to investigate whether  this improvement propagates into  a quantitative improvement
in the error term  of~\cite[Theorem~1.1]{DuZa} for the  second moment of the above $L$-functions.

Let $M,N$ be two positive real numbers and
$\balpha = (\alpha_m)_{m\sim M}$  and $\bbeta =  (\beta_n)_{n\sim N}$ be complex weights
supported on dyadic intervals $m\sim M$ and  $n\sim N$, where  $a \sim A$   indicates
$a \in [A, 2A)$. Let $K: \mathbb{F}_q \rightarrow \mathbb{C}$ be some function, usually called a {\it  kernel}. A bilinear form involving $K$  is a sum of the shape 
$$
\sum_{m\sim M} \sum_{n\sim N}  \alpha_m \alpha_n K(mn).
$$
Bounds of such sums also have key  automorphic and arithmetic applications. 

In a series of two recent breakthrough papers using deep algebro--geometric techniques, Kowalski, Michel and Sawin~\cite{KMS1,KMS2}  have established non-trivial estimates 
for bilinear forms  with Kloosterman sums
 in~\cite{KMS1} and  generalised Kloosterman sums in~\cite{KMS2}.  
In particular, their estimates apply below  the {\it P{\'o}lya--Vinogradov
range\/}, that is, when the ranges of summation are $M, N \sim q^{1/2}$;
 this is where completion and Fourier theoretic methods breakdown. Such bounds are a crucial ingredient
to the evaluation of asymptotic moments of $\operatorname{GL}_2(\mathbb{A}_{\mathbb{Q}})$ $L$-functions over primitive Dirichlet characters, with power saving error (as well as many more automorphic applications), see~\cite{BFKMM,KMS1,KMS2,Zac} and references therein.

It is important to note that the bilinear sums we study here
 do not fall under the umbrella of the results of~\cite{KMS2}. 
The initial approach of~\cite{KMS2} (Vinogradov's $ab$ shifting trick and the Riemann Hypothesis for algebraic curves over a finite field) can be used.
However,
our approach leads to a much stronger result, for comparison see Appendix~\ref{app:B}.  For an alternative treatment of bilinear forms
in classical Kloosterman sums using the \textit{sum-product} phenomenon, see~\cite{Shk}.

The above two themes:
\begin{itemize}
\item {\it lower bounds on the number of prime quadratic residues and non-residues;\/}
\item {\it bounds of bilinear sums with modular square roots;\/}
\end{itemize}
come together in the third direction which we pursue here: {\it  the distribution of  
square roots modulo $q$ of primes $p\le P$\/}. Indeed, in the asymptotic formula we obtain, the main term
 is controlled by 
the counting function of prime quadratic residues,
while the error term is given by the discrepancy, and depends on the quality of our bounds for
certain bilinear sums we estimate in this paper.

We remark that it is natural to attempt to improve the error term on the average square of some $L$-functions 
from~\cite{DuZa}. This however requires substantial effort with optimisation and balancing a 
number of estimates and so falls outside of the scope of this paper. 

We are now ready to state some of the results in this paper. A high level sketch of the ideas and methodology in the proofs is deferred to Section~\ref{highlevel}.

\subsection{Counting split primes in imaginary quadratic extensions}  \label{sec:split}
Our first result is an 
unconditional lower bound for $N_q(P)$, but with ineffective constant.
\begin{theorem} \label{uncondthm1}
For any fixed  $\varepsilon>0$, any
sufficiently large prime $q$, and any $P$ with 
$q\ge P \ge q^{1/4+\varepsilon}$,  we have 
$$
N_q(P) \geq c(\varepsilon)  \min \left\{ P^{1/2} q^{-\varepsilon/2}, Pq^{-1/4-2\varepsilon/3}\right\},
$$ 
where $c(\varepsilon)>0$ depends only on $\varepsilon$. 
\end{theorem}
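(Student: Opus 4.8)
The plan is to count split primes $p \le P$ by detecting the splitting condition $\chi(p) = 1$ with the indicator $\tfrac{1}{2}(1 + \chi(p))$ and then summing over primes weighted so that positivity is preserved. Rather than working directly with $\sum_{p \le P} (1+\chi(p))$, which would require cancellation in $\sum_{p\le P}\chi(p)$ that is out of reach in the Pólya--Vinogradov range $P \le q$, I would exploit multiplicativity: if $p_1, p_2$ are both nonsplit then $p_1 p_2$ is split, so one can also harvest split integers of the form $p_1 p_2$ with $p_1, p_2 \le P^{1/2}$. Concretely, I would introduce a smooth majorant/minorant for the primes up to $P$ (or up to $P^{1/2}$) and study the bilinear-type expression $\sum_{m \sim M}\sum_{n \sim N}\alpha_m \beta_n \,\tfrac{1}{2}(1+\chi(mn))$ with $\balpha, \bbeta$ supported on primes, $MN \asymp P$. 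The diagonal term $\tfrac12 \sum \alpha_m\beta_n$ furnishes the main term of size $\asymp P/(\log P)^{?}$ after summing dyadically, and the success of the argument hinges on showing the off-diagonal $\sum_{m}\sum_n \alpha_m\beta_n \chi(mn)$ is smaller.

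The key steps, in order, are: (i) reduce to a bilinear character sum over primes via a Vaughan-type or sieve decomposition of the indicator of primes in $[2,P]$ — here the Linnik--Vinogradov input \eqref{resp} guarantees there is at least one small prime quadratic residue $r_q \le q^{1/4+o(1)}$, which seeds the multiplicative structure; (ii) split the prime sum into a ``Type I'' part where one variable is short and a ``Type II'' (genuinely bilinear) part with both variables $\asymp q^{?}$; (iii) bound the Type II sum by Cauchy--Schwarz, opening the square to reach a quantity governed by the additive energy of quadratic residues in $\Fq$ — precisely the quantity the paper controls in its second part — and invoke that energy bound to extract a power saving; (iv) bound the Type I sums by the Pólya--Vinogradov / Burgess inequality for $\chi$, which is nontrivial exactly because $\chi$ has modulus $q$ and the variable ranges up to $P \le q$; (v) reassemble, optimise the split point between the two variables (this is where the $\min\{P^{1/2}q^{-\varepsilon/2}, Pq^{-1/4-2\varepsilon/3}\}$ shape emerges — the first term dominating when $P$ is near $q^{1/4+\varepsilon}$ and the second when $P$ is larger), and feed the result into a lower-bound sieve (a Selberg or combinatorial sieve) to pass from the weighted count back to an honest count of primes, which is where the constant $c(\varepsilon)$ becomes ineffective (via the exceptional/Siegel zero possibility handled by Linnik--Vinogradov-type reasoning).

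The main obstacle I expect is step (iii): producing a genuinely nontrivial bound for the bilinear character sum $\sum_m\sum_n \alpha_m\beta_n\chi(mn)$ when $M, N$ are both of size around $q^{1/4}$ to $q^{1/2}$, i.e. at or below the Pólya--Vinogradov threshold. Naive completion gives nothing here; the whole point is that the additive-energy estimate for quadratic residues — the technical heart of the second part of the paper — is what converts the Cauchy--Schwarz'd sum into a power saving. Balancing this against the Type I contribution, and tracking how the saving degrades as $P$ moves across the range $[q^{1/4+\varepsilon}, q]$, is delicate and is precisely what dictates the two-term minimum in the statement. A secondary nuisance is the sieve step: one must ensure the sieve weights are compatible with the bilinear structure (bounded support, divisor-bounded coefficients) so that the error terms remain of the bilinear shape above, and one must absorb the potential Siegel zero, which is the source of the ineffectivity of $c(\varepsilon)$.
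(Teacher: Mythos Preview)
Your proposal has a fundamental gap: the bilinear character sum $\sum_m\sum_n\alpha_m\beta_n\chi(mn)$ \emph{factors completely}, since $\chi$ is totally multiplicative. It equals $\bigl(\sum_m\alpha_m\chi(m)\bigr)\bigl(\sum_n\beta_n\chi(n)\bigr)$, so Cauchy--Schwarz and bilinear methods gain nothing over bounding a single linear sum $\sum_{p\le P}\chi(p)$ --- and nontrivial bounds on this sum for $P\le q$ are exactly what is out of reach (it is essentially the least-prime-nonresidue problem in disguise). Relatedly, the additive energy estimate in the second part of the paper is \emph{not} about multiplicative character sums at all: it concerns the additive structure of the set $\{x\in\Fq: x^2\in[N,2N]\}$ and is used to bound Weyl sums $\sum_{x^2=amn}\eq(hx)$ with \emph{additive} characters. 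It has no bearing on $\sum_m\sum_n\alpha_m\beta_n\chi(mn)$.

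The paper's actual approach avoids the problem entirely by replacing the signed quantity $\chi(p)$ with the nonnegative multiplicative function $r(n)=\sum_{d\mid n}\chi(d)$. The key input is the Linnik--Vinogradov/Pollack mean value $\sum_{n\le x}r(n)=L(1,\chi)x+O(x^{1-\eta})$ valid for $x\ge q^{1/4+\varepsilon}$ (this is where Burgess enters). One studies $\boldsymbol{Q}(P)=\sum_{n\approx P}r(n)\log n$, expands $\log n=\sum_{p^\ell\Vert n}\ell\log p$, and writes $\boldsymbol{Q}(P)=\sum_p c_p(P)\log p$. Splitting at a parameter $y$ into small split primes, inert primes, and large split primes, one shows the first two contributions cannot account for all of $\boldsymbol{Q}(P)\sim\tfrac12 L(1,\chi)P\log P$; hence the contribution from large split primes is $\gg L(1,\chi)P\log P$, and since each such prime contributes $\ll Py^{-1}q^{o(1)}$, there must be $\gg L(1,\chi)yq^{o(1)}$ of them. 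The choice $y=\min\{Pq^{-1/4-2\varepsilon/3},P^{1/2-\varepsilon/2}\}$ (the first constraint from needing $P/y\ge q^{1/4+2\varepsilon/3}$ to apply the mean value theorem, the second from controlling the small-split-prime contribution) together with Siegel's theorem gives the stated bound and the ineffective constant.
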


\begin{remark} As with  the results on quadratic residues of Benli and Pollack~\cite{Ben,BePo,Po2}, Siegel's theorem is used in the proof of 
Theorem~\ref{uncondthm1}, and so the constant $c(\varepsilon)$ is ineffective. 
\end{remark} 

Observe that for $\varepsilon>0$ small and $A>0$ fixed~\cite[Theorem~1.3]{Po2} guarantees that 
$$
N_q(q^{1/4+\varepsilon}) \ge c(\varepsilon, A)  (\log q)^{A}.
$$
for some constant $c(\varepsilon,A)>0$ that depends only on $\varepsilon$ and  $A$. 
Theorem~\ref{uncondthm1} above improves this to a small power of $q$, that is, 
$$
N_q\(q^{1/4+\varepsilon}\) \geq c(\varepsilon)  q^{\varepsilon/3}.
$$
This was also proved independently by Benli~\cite{Ben} (taken with $k=2$) very recently. 
Theorem~\ref{uncondthm1} 
also substantially improves the lower bound 
$$
N_q(P) \geq P^{1/25} q^{-1/50}, \quad q^{1/2+\varepsilon} \leq  P \leq q,
$$
established by Benli and Pollack~\cite[Theorem~3]{BePo} and
in particular, implies
$$
N\(q^{1/2+\varepsilon} \) \ge q^{1/4}.
$$
for any $\varepsilon > 0$, provided that $q$ is large enough. 

Our next result is an unconditional bound for $N_q:=N_q(q)$, with effective constant.
\begin{theorem} \label{uncondthm2}
Suppose $q \ge 67$ is prime with  $q \equiv 3 \pmod{16}$. Then  
$$
N_q >\frac{\(2-\log (3 \sqrt{2}) \)}{2} \frac{\lfloor \sqrt{3q/4} \rfloor}{\log q}.
$$
\end{theorem}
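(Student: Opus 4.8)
The plan is to use the arithmetic of $F=\Q(\sqrt{-q})$, concretely the principal binary quadratic form of discriminant $-q$, to manufacture a large family of integers below $q$ every prime factor of which splits in $F$, and then to extract from this family a lower bound for $N_q$ by elementary prime counting.

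First I would pass from split primes to quadratic residues. Since $q\equiv 3\pmod 8$ the rational prime $2$ is inert in $F$, and since $q\equiv 3\pmod 4$ quadratic reciprocity gives, for an odd prime $p\neq q$, that $p$ splits in $F$ exactly when $\(\frac{-q}{p}\)=1$, equivalently $\(\frac pq\)=1$. Hence $N_q$ is precisely the number of odd primes $p<q$ with $\(\frac{-q}{p}\)=1$, and it suffices to exhibit that many such primes.

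Next I would build the family. For each odd integer $t$ with $1\le t<\sqrt{3q}$ put
\[
m_t=\frac{t^2+q}{4},
\]
the value of the principal reduced form $x^2+xy+\tfrac{q+1}{4}y^2$ at a point with $2x+1=t$. Using $q\equiv 3\pmod{16}$ one verifies the elementary facts that each $m_t$ is an odd integer, that $\tfrac{q+1}{4}\le m_t<q$, that $t\mapsto m_t$ is injective, and that the number of admissible $t$ is at least $\fl{\sqrt{3q/4}}$. The decisive observation is that $4m_t=t^2+q$, so every prime $p\mid m_t$ satisfies $t^2\equiv-q\pmod p$; as $0<m_t<q$ this forces $p\neq q$ and $\(\frac{-q}{p}\)=1$, i.e.\ $p$ splits in $F$. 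Thus the (at least $\fl{\sqrt{3q/4}}$) integers $m_t$ all lie in $[\tfrac q4,q)$ and are composed only of the primes counted by $N_q$.

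The heart of the proof, and the step I expect to be the main obstacle, is converting this into the quantitative bound with the stated constant. Here one exploits two features of $\{m_t\}$: (i) each $m_t$ lies in the short interval $[\tfrac q4,q)$, so $\log m_t=\log q+O(1)$ and the multiset of prime factors of the $m_t$ carries total logarithmic mass of order $\sqrt q\log q$; and (ii) for each prime power $p^j$ the congruence $t^2\equiv-q\pmod{p^j}$ has only two solutions, so a fixed prime $p$ can divide at most $2\lceil\sqrt{3q}/p\rceil$ of the $m_t$. Splitting the split primes at a suitable threshold $z$, one would bound the contribution of the large primes ($p>z$) via (ii) --- which is governed by $N_q$ together with a Chebyshev-type upper bound for $\pi$ --- and the contribution of the small primes ($p\le z$) via a smoothness/sieve estimate, then optimise $z$ and use effective (Rosser--Schoenfeld) forms of Mertens' theorems; this yields $N_q\ge\bigl(\tfrac{2-\log(3\sqrt 2)}{2}+o(1)\bigr)\fl{\sqrt{3q/4}}/\log q$, with the precise constant emerging from the optimisation. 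The delicate case is when a positive proportion of the $m_t$ are $\sqrt q$-smooth: one then has to prevent a small set of split primes from generating the whole family, and it is exactly this accounting that both forces the loss of the factor $\log q$ and pins down the constant $\tfrac{2-\log(3\sqrt 2)}{2}$. A final check that the error terms are already dominated for $q\ge 67$ gives the strict inequality as stated.
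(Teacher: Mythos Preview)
Your setup is essentially the paper's: both use values of the principal form of discriminant $-q$ at $y=1$ lying in $[q/4,q)$, and both observe that every prime divisor of such a value is a split prime $\le q$. (The paper parametrises by $\boldsymbol P(n)=n^2+\tfrac{q+1}{4}$ for $1\le n\le t:=\lfloor\sqrt{3q}/2\rfloor$, you by $(t^2+q)/4$ for odd $t<\sqrt{3q}$; these produce the same number $\approx\sqrt{3q}/2$ of integers and the same arithmetic.)

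Where you diverge is the extraction step, and this is exactly the part you flag as an obstacle. The paper does \emph{not} introduce a threshold $z$, does not invoke Mertens, and performs no optimisation. Instead it multiplies all the values together into $\boldsymbol R_q=\prod_{n\le t}\boldsymbol P(n)$ and bounds $\omega(\boldsymbol R_q)\le N_q$ from below by controlling each $\ord_p\boldsymbol R_q$. For odd split $p$ the two Hensel lifts $a_p,b_p$ of the roots of $\boldsymbol P$ give
\[
\ord_p\boldsymbol R_q\;\le\;2\,\frac{\log q}{\log p}\;+\;2\,\ord_p\bigl((t-1)!\bigr),
\]
the first term coming from $n=a_p,b_p$ (where $\boldsymbol P(n)\le q$) and the second from the Legendre-type identity $\sum_{n\neq a_p}\ord_p(n-a_p)\le\ord_p((t-1)!)$. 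Together with $\ord_2\boldsymbol R_q\le t/2$ this yields
\[
(q/4)^t<\boldsymbol R_q\le 2^{t/2}\,q^{2\omega(\boldsymbol R_q)}\,\bigl((t-1)!\bigr)^2.
\]
Taking logarithms, applying Stirling in the form $(t-1)!\le(t/e)^t$, and using $t^2\le 3q/4$ gives the constant directly:
\[
2-\log 4-\tfrac12\log 2+\bigl(\log q-2\log t\bigr)\ge 2-\log 4-\tfrac12\log 2-\log\tfrac34=2-\log(3\sqrt2).
\]
So the numerical constant is not the outcome of an optimisation but the immediate residue of four simple inputs: the lower bound $q/4$ on each factor, the $e$ from Stirling, the power of $2$, and the ceiling $t^2\le 3q/4$. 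Your threshold/Mertens route could in principle be made to work, but as written it is a sketch with the decisive calculation missing, and it is unlikely to land on exactly $\tfrac{2-\log(3\sqrt2)}{2}$ without essentially reproducing the factorial/Stirling accounting above.
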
 

Since 
$$
\frac{\(2-\log (3 \sqrt{2}) \)  \sqrt{3}}{4}  = 0.2402 \ldots, 
$$
we see from Theorem~\ref{uncondthm2} that there  is an effectively computable 
absolute constant $c_0$ such that for $q\ge c_0$ we have 
$$
N_q >\ \frac{0.24  \sqrt{q} }{\log q}. 
$$
In fact one can get a better constant   by estimating certain quantities more carefully. 

\begin{remark} Unfortunately the argument of the proof of 
Theorem~\ref{uncondthm2} does not scale to estimate $N_q(P)$ with $P < q$. However 
it actually increases its strength for $P > q$, which is still a meaningful range in the problem of 
estimating  $N_q(P)$.  We do not consider this case as small values of $P$ are of our principal 
interest. 
\end{remark} 

Many famous unsolved conjectures are known to hold under the assumption of a Siegel zero~\cite{FI}. This is because one can sometimes break the parity problem of the sieve with this 
hypothesis.
 A notable example is Heath-Brown's proof~\cite{HB2} of the twin prime conjecture assuming Siegel zeros. 
 Continuing this tradition, we prove an essentially sharp lower bound for $N_q(P)$ under the assumption that  $L(s,\chi)$ has a mild Siegel zero. 

\begin{theorem} \label{siethm}
Suppose $q \equiv 3 \pmod{4}$ is a large prime and
\begin{equation}\label{Lsize}
L\(1,\chi\) = O\(1/(\log q)^{10}\).
\end{equation}
Then for any fixed  $\varepsilon>0$ and $P$ with $q^{1/2+\varepsilon} \le P \leq q$, we have 
$$
N_q(P) \ge c(\varepsilon) h(-q) \frac{P}{ \sqrt{q} (\log q)^2},
$$ 
where $h(-q)$ is the class number of $F=\Q(\sqrt{-q})$
and $c(\varepsilon)>0$ depends only on $\varepsilon$.
\end{theorem}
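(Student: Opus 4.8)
My proposal is to reduce the statement, via the analytic class number formula, to a lower bound for the Chebyshev-type sum $\psi_F(x)=\sum_{N\mathfrak a\le x}\Lambda_F(\mathfrak a)$ of the Dedekind zeta function $\zeta_F(s)=\zeta(s)L(s,\chi)$, and then to extract the required positive contribution from the near-Siegel zero of $L(s,\chi)$ by the explicit formula, with the Deuring--Heilbronn zero-repulsion phenomenon supplying the quantitative control. Since $q\equiv 3\pmod 4$, the class number formula gives $L(1,\chi)=\pi h(-q)/\sqrt q$, so the claim is equivalent to $N_q(P)\gg_\varepsilon L(1,\chi)\,P/(\log q)^2$. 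From $\zeta_F=\zeta\cdot L(\cdot,\chi)$ one has $\psi_F(x)=\psi(x)+\psi(x,\chi)$, and collecting the split primes with a logarithmic weight gives
\[
\sum_{p\le P,\ \chi(p)=1}\log p=\tfrac12\,\psi_F(P)+O\bigl(P^{1/2}(\log P)^2\bigr),
\]
the error absorbing inert (degree-two) primes, ramification and higher prime powers. As $N_q(P)\ge(\log P)^{-1}\sum_{p\le P,\ \chi(p)=1}\log p$ and $\log P\asymp_\varepsilon\log q$, it suffices to prove $\psi_F(P)\gg_\varepsilon L(1,\chi)\,P/\log q$.

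By the standard theory of exceptional zeros, hypothesis~\eqref{Lsize} forces $L(s,\chi)$, hence $\zeta_F(s)$, to have a simple real zero $\beta$ with $L(1,\chi)(\log q)^{-2}\ll 1-\beta\ll(\log q)^{-9}$. The truncated explicit formula for $\zeta_F$ then reads
\[
\psi_F(x)=x-\frac{x^{\beta}}{\beta}-\sum_{\substack{\rho\ne\beta\\ |\Im\rho|\le T}}\frac{x^{\rho}}{\rho}+O\!\left(\frac{x(\log qx)^{2}}{T}+\log qx\right),
\]
the sum over the remaining zeros of $\zeta(s)L(s,\chi)$. The terms $x$ and $x^{\beta}/\beta$ almost cancel, and since $(1-\beta)\log x$ is small the main term is $x-x^{\beta}/\beta\gg x(1-\beta)\log x\gg_\varepsilon L(1,\chi)\,x/\log q$, which already has the right size; it remains to bound the $\rho$-sum and the tail by $o\bigl(L(1,\chi)\,x/\log q\bigr)$. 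For this I would invoke the Deuring--Heilbronn phenomenon: because $1-\beta$ is tiny, every zero $\rho\ne\beta$ of $\zeta_F$ is repelled to $\Re\rho<1-c\log\bigl(c'/(1-\beta)\bigr)\big/\log\bigl(q(|\Im\rho|+2)\bigr)$, so with $T$ a small power of $q$ and $\log x\ge(\tfrac12+\varepsilon)\log q$ one obtains $x^{\Re\rho}\ll x(\log q)^{-A}$ with $A$ as large as the Deuring--Heilbronn constant and the exponent $10$ in~\eqref{Lsize} permit; for $A$ large this disposes of both the $\rho$-sum and the tail. Dividing by $2\log P$ then yields Theorem~\ref{siethm}.

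The heart of the matter is this last estimate. In the range $P\le q$ the classical de~la~Vall\'ee~Poussin zero-free region is worthless — its width $c/\log q$ only begins to help once $\log P\gg(\log q)^2$ — so the \emph{entire} saving in $\psi_F(P)$ beyond the near-cancellation must come from the zero repulsion, and one has to verify that the exponent $10$ is generous enough for the Deuring--Heilbronn gain to dominate simultaneously the non-exceptional zeros and the term $x(\log qx)^2/T$, uniformly down to $P=q^{1/2+\varepsilon}$ and uniformly in the size of $L(1,\chi)$ (a smaller $L(1,\chi)$ sharpens the repulsion but also shrinks the target, so the two have to be balanced). I also expect Siegel's theorem to be needed, to guarantee $L(1,\chi)\gg_\varepsilon q^{-\varepsilon}$ — ensuring that the main term beats the $O(P^{1/2}(\log P)^2)$ loss incurred in passing from $\psi_F$ to $N_q$ — which is what makes $c(\varepsilon)$ ineffective.

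An alternative, closer to the parity-breaking picture stressed in the introduction, replaces the explicit formula by a lower-bound sieve on the ideals of $\mathcal O_F$ of norm $\le P$: by a lattice-point count over the $h(-q)$ reduced binary quadratic forms of discriminant $-q$ their number is $(1+o(1))\,\pi h(-q)P/\sqrt q$ for $P\ge q^{1/2+\varepsilon}$ (the error being $o$ of the main term precisely because $P\ge q^{1/2+\varepsilon}$), while the near-Siegel zero — which forces $\sum_{p\le z}\frac{(1+\chi(p))\log p}{p}$ to be small, i.e.\ makes small split primes rare — collapses the sieve dimension to well below $1$, so that a Selberg (or combinatorial low-dimensional) lower-bound sieve produces the split prime ideals without meeting the parity obstruction. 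The quantitative input there is again $\theta(t,\chi)=\sum_{p\le t}\chi(p)\log p\sim-t^{\beta}$ for $t$ up to $q$, which once more rests on Deuring--Heilbronn.
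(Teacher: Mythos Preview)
Your approach is genuinely different from the paper's. The paper does not use the explicit formula or Deuring--Heilbronn repulsion at all. Instead it argues by a sieve on values of binary quadratic forms: Duke's equidistribution of Heegner points guarantees that a positive proportion of the $h(-q)$ reduced forms of discriminant $-q$ have all coefficients $\asymp\sqrt q$; for each such form a one-variable $\beta$-sieve (Friedlander--Iwaniec, sieve dimension~$2$) produces $\gg P/(\sqrt q\log^2 q)$ squarefree integers with at most five prime factors represented by that form. Collecting over forms yields a set $\mathcal F_{-q}$ of size $\gg h(-q)P/(\sqrt q\log^2 q)$ whose elements have only split prime factors. The hypothesis $L(1,\chi)\ll(\log q)^{-10}$ is then invoked \emph{only} through Pollack's asymptotic $\sum_{n\le x}r(n)=L(1,\chi)x+O(x^{1-\eta})$: it forces the number of split primes in each dyadic window $[A,2A]\subset[P^{10/49},P^{1/2}]$ to be $\ll L(1,\chi)^{1/3}A$, and hence the number of composite members of $\mathcal F_{-q}$ to be $O\bigl(h(-q)P/(\sqrt q(\log q)^{7/3})\bigr)$. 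The exponent $10$ enters only as the inequality $10/3-1>2$, so the arithmetic is completely transparent.

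Your explicit-formula route is in principle viable and has the advantage of avoiding Duke's theorem, but the step you yourself flag is a genuine sticking point. With $1-\beta$ only as small as $(\log q)^{-9}$ or so, Deuring--Heilbronn repels the non-exceptional zeros by $\asymp\log\log q/\log q$, so at $P=q^{1/2+\varepsilon}$ the saving on each $P^{\rho}$ is a power of $\log q$ whose exponent is roughly the product of the Deuring--Heilbronn constant, $9$, and $\tfrac12+\varepsilon$; whether this exceeds the $8$ or so powers of $\log q$ needed to undercut the main term $(1-\beta)P\log P$ is a numerical question you do not settle, and with the standard Deuring--Heilbronn constants (typically $\le 1$) it is genuinely delicate. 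The paper's sieve argument sidesteps this entirely --- it never needs to locate a zero of $L(s,\chi)$ or invoke any zero-free region. Your alternative sieve sketch at the end is closer in spirit, though the paper sieves values of forms in one integer variable (not ideals), and its parity-breaking input is the mean value of $r(n)$ rather than an asymptotic for $\theta(t,\chi)$.
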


\begin{remark} The constant $c(\varepsilon)$ in Theorem~\ref{siethm} is ineffective. 
\end{remark} 

Counting small split primes in general number fields is a notoriously difficult and fundamental problem. Ellenberg and Venkatesh~\cite{EV} have 
established a direct connection between counts for small split primes and bounds for $\ell$-torsion in general class groups. 

We also refer to~\cite{Go,GrOno,GOT} for results and references on lower bounds on class numbers of imaginary quadratic fields. 

\subsection{Bilinear forms and equidistribution}
We recall that $a \sim A$ means $a \in [A, 2A)$. Given two real numbers $M,N$ and complex weights 
\begin{equation}
\label{eq:weight}
\balpha = (\alpha_m)_{m\sim M}  \quad \text{and} \quad \bbeta =  (\beta_n)_{n\sim N},
\end{equation}
we denote 
$$
\| \balpha \|_{\infty}:=\max_{m \sim M} |\alpha_m| \quad \text{and} \quad \| \balpha \|_{\sigma}:= \(\sum_{m \sim M} |\alpha_m|^{\sigma} \)^{\frac{1}{\sigma}},
$$
and similarly for $\bbeta$. 

For $a,h \in \Fq^{\times}$ we consider bilinear forms in Weyl sums for square roots
\begin{equation} \label{Wdef}
W_{a,q}(\balpha, \bbeta; h,M,N):=   \sum_{m \sim M}  \sum_{n \sim N} \alpha_m \beta_n \sum_{\substack{x \in \Fq \\
x^2 = amn}} \eq(hx).
\end{equation}
We notice that this is equivalent to studying bilinear forms with Sali\'{e} sums, given by~\eqref{eq:Sal}, thanks to the
 evaluation  in~\cite{Sal}, see also~\cite[Lemma~12.4]{IwKow} and~\cite[Lemma~4.4]{SA}:
\begin{equation}
\label{eq:S eval}
\frac{1}{\sqrt{q}}  S(m,n;q)   =  \frac{1}{\sqrt{q}}  S(1,mn;q)   =\varepsilon_q  \( \frac{n}{q} \)  \,  \sum_{\substack{x \in \mathbb{F}_q \\ x^2=mn}} \eq \( 2x \) , 
\end{equation}
where $ \varepsilon_q  = 1$ if $q \equiv 1 \pmod 4$ and  $ \varepsilon_q  = i$  if $q \equiv 3 \pmod 4$ 
(note that if $(mn/q) = -1$, the Sali\'{e} sum vanishes).
The tuple of characters $\(1,(\cdot/q) \)$ is \textit{Kummer induced} (cf.~\cite[Section~2]{KMS2}), and so the results of~\cite{KMS2} 
do not apply to~\eqref{Wdef}.

Our goal is to improve the trivial bound 
$$ 
W_{a,q}(\balpha, \bbeta; h,M,N) = O\(  \| \balpha \|_1   \| \bbeta \|_1\),
$$
in the P{\'o}lya--Vinogradov range. In arithmetic applications the case when  weights satisfy 
\begin{equation} \label{eq:balanced}
\| \balpha  \|_{\infty}, \| \bbeta \|_{\infty} = q^{o(1)}
 \end{equation}
is most important. However,
our bounds are valid for more general weights.  
Making use of weighted additive energies, 
and some  ideas from~\cite{BGKS} and~\cite{CCGHSZ}, 
 we prove the following result.
 
\begin{theorem} \label{thm:Waq}
For any positive integers $M,N\le q/2$ and any weights $\balpha$  and $\bbeta$ as in~\eqref{eq:weight}, 
we have 
\begin{align*}
 |W_{a,q}(\balpha,\bbeta;h,M,N)|\le  \|\balpha\|_2 \|\bbeta\|_{\infty}^{1/3}& \|\bbeta\|_1^{2/3}   q^{1/8+o(1)} M^{7/24}N^{1/8}\\
 & \quad\left(\frac{M^{7/48}}{q^{1/16}}+1\right)\left(\frac{N^{7/48}}{q^{1/16}}+1\right)
\end{align*}
and
\begin{align*}
|W_{a,q}(\balpha, \bbeta; h,M,N)|   \le \| \balpha \|_2  \| \bbeta \|_1^{3/4}  & \| \bbeta \|_{\infty}^{1/4} q^{1/8+o(1)} M^{5/16} N^{1/16}\\
 &  \(\frac{M^{3/16}}{q^{1/8}}  +1\)  \(\frac{N^{3/16}}{q^{1/8}} +1 \).
\end{align*}
\end{theorem}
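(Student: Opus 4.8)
The plan is to open up the square-root sum over $x$ using the Weyl differencing / additive energy philosophy, reducing the bilinear form to a count of solutions of $amn \equiv am'n' \pmod q$ weighted by the phases $\eq(h(x-x'))$, and then to estimate the resulting energy-type sum via a combination of H\"older's inequality and completion. First I would dispose of the phase $\eq(hx)$: since $x^2 = amn$ has at most two solutions $\pm x_0$, write the inner sum as $\eq(hx_0)+\eq(-hx_0)$ and note $|\eq(hx_0)+\eq(-hx_0)| \le 2$, so for the purpose of an upper bound one may essentially replace the inner sum by $2\cdot \mathbf{1}[amn \text{ is a QR}]$ times a bounded quantity; more precisely, one keeps the phase but treats it as a bounded kernel evaluated at a square root of $amn$. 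This is exactly the structure of a bilinear form $\sum_m\sum_n \alpha_m\beta_n K(mn)$ with $K$ the Sali\'e-type kernel via~\eqref{eq:S eval}.

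The next step is a Cauchy--Schwarz in the $m$ variable to peel off $\|\balpha\|_2$, leaving a square of a sum over $n, n'$ and $x, x'$ constrained by $amn = x^2$, $amn' = (x')^2$ for the same $m$; eliminating $m$ gives $n (x')^2 = n' x^2$, i.e.\ $a m n n' $ is simultaneously expressed two ways, and one is led to counting quadruples with $n/n' \equiv (x/x')^2$. At this point I would convert the resulting count into a (weighted) additive energy of the set of square roots, or equivalently a multiplicative-energy count of quadratic residues in $\Fq$, which is precisely the quantity the authors advertise studying; bounds for it come from the Karatsuba--Garaev--type estimates in~\cite{BGKS} and the refinements in~\cite{CCGHSZ}. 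The asymmetry between $M$ and $N$ in the statement, and the appearance of $\|\bbeta\|_1^{2/3}\|\bbeta\|_\infty^{1/3}$ versus $\|\bbeta\|_1^{3/4}\|\bbeta\|_\infty^{1/4}$, signals that two different H\"older splittings of the $n$-sum are used — a $3$rd-moment versus a $4$th-moment balance — giving the two displayed inequalities; the factors $\bigl(M^{7/48}q^{-1/16}+1\bigr)$ etc.\ are the usual ``completion threshold'' terms, reflecting whether the relevant variable exceeds $q^{1/2}$-ish so that the complete-sum bound or the trivial bound dominates.

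Concretely, after Cauchy--Schwarz I would: (i) open the square and swap summation to get $\sum_{n,n'}\beta_n\overline{\beta_{n'}}\sum_{x,x'}\eq(h(x-x'))\,\#\{m\sim M: amn=x^2,\ amn'=(x')^2\}$; (ii) bound the $m$-count, observing $m$ is determined mod $q$ by $x$ and $n$, so the count is $\le M/q + 1$ times an indicator, and the $x,x'$ with a fixed ratio of squares are controlled; (iii) apply completion to the $x$ or $n$ sum to introduce a new free parameter and trade the phase $\eq(h(x-x'))$ against an $\Fq$-complete sum, picking up $q^{o(1)}$ and a $q^{1/2}$ from completion; (iv) estimate the remaining diophantine count — congruences of the form $n t^2 \equiv n'$ — by H\"older, splitting $\|\bbeta\|$ into $L^1$ and $L^\infty$ pieces in the two advertised ways, and invoking the additive-energy bound for quadratic residues. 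The main obstacle, as I see it, is step (iii)--(iv): getting the \emph{power} saving in the full P\'olya--Vinogradov range $M,N\sim q^{1/2}$ rather than just a $q^{o(1)}$ saving requires the energy estimate to beat the trivial bound by a genuine power, and the bookkeeping of the completion thresholds — deciding in which regime of $M,N$ relative to powers of $q$ the completed sum versus the trivial sum wins, which is what produces the $(\cdots +1)$ factors — has to be done carefully and symmetrically in $m$ and $n$ to land exactly on the exponents $7/24,1/8$ and $5/16,1/16$ in the statement. I expect the delicate point to be showing the quadratic-residue additive energy is small enough (essentially $q^{o(1)}$ above the ``square-root cancellation heuristic'' count) uniformly in the shift $a$, which is where the input from~\cite{BGKS,CCGHSZ} is indispensable.
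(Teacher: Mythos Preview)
Your Cauchy--Schwarz step in $m$ to peel off $\|\balpha\|_2$ matches the paper, but after that the proposal diverges from the actual mechanism and, as written, has a genuine gap.

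The paper does \emph{not} eliminate $m$ to reach the multiplicative constraint $n(x')^2=n'x^2$. Instead, after splitting by the quadratic character of $n_1,n_2$ (so that $am$ is a residue), it writes $am=t^2$ and factors $u=t\sqrt{n_1}$, $v=t\sqrt{n_2}$, hence
\[
u-v \;=\; t\bigl(\sqrt{n_1}-\sqrt{n_2}\bigr).
\]
This \emph{decouples} the $m$-dependence (now entirely in $t$) from the $n$-dependence (now in $\lambda:=\sqrt{n_1}-\sqrt{n_2}$). Collecting by $\lambda$ gives $R_1=\tfrac12\sum_\lambda A_{h,\lambda,a}\,Q_{\lambda,1}(\bbeta)$ with $A_{h,\lambda,a}=\sum_{m\sim M}\sum_{t^2=am}\eq(ht\lambda)$. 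A H\"older inequality then yields $|R_1|^4\ll q\,\|\bbeta\|_1^4\,E_{q,b}(\mathbf{1}_{[M,2M]})\,E_{q,1}(\bbeta)$, where $E$ is the \emph{additive} energy of modular square roots --- counting $u+y=x+v$ with $u^2,\dots,y^2\in[N,2N]$. There is no completion step of the kind you describe; the factor of $q$ comes from orthogonality in $\sum_\lambda|A_{h,\lambda,a}|^4=qE_{q,b}(\mathbf{1})$.

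Your route --- passing to the multiplicative relation $n t^2\equiv n'$ and bounding a ``multiplicative energy of quadratic residues'' --- is not what drives the exponents here, and it is unclear how you would recover any cancellation from the phase $\eq(h(x-x'))$ once you have replaced the $m$-count by a pointwise bound. The entire power saving comes from the additive energy $E_{q,j}$, and its estimation is the technical heart: one squares the relation $u-v=\lambda$ twice to turn it into a \emph{linear} congruence $n+2j^2\lambda^4\equiv 2j\lambda^2 m\pmod q$ with $|n|\ll N^2$, $|m|\ll N$, then applies a geometry-of-numbers lemma (successive minima, in the spirit of~\cite{BGKS}) to get a fourth-moment bound $\sum_\lambda Q_\lambda^4\le N^{13/2+o(1)}/q^{3/2}+N^{3+o(1)}$. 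The two displayed bounds in the theorem then arise from two different ways of controlling $E_{q,1}(\bbeta)$: H\"older between the first and fourth moments of $Q_\lambda$ (giving $\|\bbeta\|_\infty^{1/3}\|\bbeta\|_1^{2/3}$), versus $\max_\lambda|Q_\lambda|\cdot\sum_\lambda|Q_\lambda|$ together with the point-counting bound of~\cite{CCGHSZ} (giving $\|\bbeta\|_\infty^{1/4}\|\bbeta\|_1^{3/4}$). None of this comes from a third- versus fourth-moment split of the $n$-sum, nor from a completion threshold in your sense.
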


When the weights $\balpha$ and $\bbeta$ satisfy~\eqref{eq:balanced}, we can re-write the  bounds  of Theorem~\ref{thm:Waq} in the following simplified form 
 \begin{equation} 
 \begin{split}
 \label{eq:simple2}
|W_{a,q}(\balpha, \bbeta; h,M,N)|   &\le  q^{1/8+o(1)}   (MN)^{19/24}\\
&  \qquad  \left(\frac{M^{7/48}}{q^{1/16}}+1\right)\left(\frac{N^{7/48}}{q^{1/16}}+1\right) 
 \end{split}
\end{equation}
and
 \begin{equation} 
 \begin{split}
 \label{eq:simple2-2}
|W_{a,q}(\balpha, \bbeta; h,M,N)|   &\le  q^{1/8+o(1)}   (MN)^{13/16}\\
&  \qquad    \(\frac{M^{3/16}}{q^{1/8}}  +1\)  \(\frac{N^{3/16}}{q^{1/8}} +1 \). 
 \end{split}
\end{equation} 
A power savings in the error term of an asymptotic formula for a second moment of certain $L$-functions~\cite{DuZa} 
comes from savings in the bound on 
$W_{a,q}(\balpha, \bbeta; h,M,N)$ in the P{\'o}lya--Vinogra\-dov range, as well as other ranges in which 
spectral techniques are used.
 Additional ideas are also needed in~\cite{DuZa} to make the asymptotic formula
unconditional, because $\alpha_m$ and $\beta_n$ are coefficients of a fixed normalised
 Kohnen newform.
  It is not known yet that they satisfy~\eqref{eq:balanced} (in this context, the 
 condition~\eqref{eq:balanced} is the Ramanujan--Petersson conjecture, also equivalent to the Lindel\"{o}f hypothesis for the 
 twisted $L$-function attached to the Shimura lift).
 The best known bound is $\alpha_m = O\( m^{1/6+\varepsilon}\)$ due to Conrey and Iwaniec~\cite{CI}.

We now outline the arithmetic applications of Theorem~\ref{thm:Waq}.  

 We recall that  the {\it discrepancy\/} $D_N$ of a sequence in $\xi_1, \ldots, \xi_N \in [0,1)$    is defined as 
\begin{equation}
\label{eq:Discr}
D_N = 
\sup_{0\le \alpha < \beta \le 1} \left |  \#\{1\le n\le N:~\xi_n\in [\alpha, \beta)\} -(\beta-\alpha)  N \right |, 
\end{equation} 
where  $\# \cS$ denotes  the cardinality of  $\cS$ (if it is finite), see~\cite{DrTi,KN} for background.
For positive integers $P$ and $R$, denote the discrepancy of the sequence (multiset) of points
$$
 \{x/q:~ x^2 \equiv pr \pmod q \ \text{for some primes} \ p\le P,\ r \le R\}
$$
by $\Delta_q(P,R)$.
Combining the bound~\eqref{eq:simple2} with the classical Erd\H{o}s--Tur\'{a}n inequality, 
see   Lemma~\ref{lem:ET}  below, we derive an equidistribution of the modular square roots of products of two primes.

\begin{corollary} \label{cor:disc pr} For $1\le P,R \le q$  we have 
$$
\Delta_q(P,R)\le q^{1/8}(PR)^{19/24+o(1)}\left(\frac{P^{7/48}}{q^{1/16}}+1\right)\left(\frac{R^{7/48}}{q^{1/16}}+1\right).
$$
\end{corollary}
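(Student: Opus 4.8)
The plan is to apply the Erd\H{o}s--Tur\'an inequality to the multiset of points $\{x/q\}$ where $x$ runs over solutions to $x^2\equiv pr\pmod q$ with $p\le P$, $r\le R$ prime. Writing $T$ for the total number of such points (counted with multiplicity), the Erd\H{o}s--Tur\'an inequality (Lemma~\ref{lem:ET}) bounds $\Delta_q(P,R)$ by
\[
\Delta_q(P,R)\ll \frac{T}{H}+\sum_{1\le h\le H}\frac1h\left|\sum_{\substack{p\le P,\ r\le R\\ p,r\ \mathrm{prime}}}\ \sum_{\substack{x\in\Fq\\ x^2\equiv pr}}\eq(hx)\right|
\]
for any parameter $H\ge 1$. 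First I would dyadically decompose the ranges $p\sim M$ with $M\le P$ and $r\sim N$ with $N\le R$ (losing only a factor $(\log q)^2=q^{o(1)}$), so that the inner double sum over primes becomes a bilinear form $W_{a,q}(\balpha,\bbeta;h,M,N)$ with $a=1$ and with $\alpha_m,\beta_n$ the indicator functions of the primes. These weights satisfy $\|\balpha\|_\infty,\|\bbeta\|_\infty\le 1$, so the simplified bound~\eqref{eq:simple2} applies directly, giving
\[
|W_{1,q}(\balpha,\bbeta;h,M,N)|\le q^{1/8+o(1)}(MN)^{19/24}\left(\frac{M^{7/48}}{q^{1/16}}+1\right)\left(\frac{N^{7/48}}{q^{1/16}}+1\right),
\]
a bound that is uniform in $h$ (it does not involve $h$ at all). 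Summing over the $O(\log H)=q^{o(1)}$ dyadic blocks in $M$ and $N$ and over $1\le h\le H$ with the weight $1/h$ (which contributes another $q^{o(1)}$), and using that $(MN)^{19/24}$ and the two parenthesised factors are monotone in $M,N$ so are maximised at $M=P$, $N=R$, the second term is
\[
\ll q^{1/8+o(1)}(PR)^{19/24}\left(\frac{P^{7/48}}{q^{1/16}}+1\right)\left(\frac{R^{7/48}}{q^{1/16}}+1\right).
\]

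For the first term $T/H$: trivially $T\ll 2\,\pi(P)\pi(R)\ll PR/(\log P\log R)\le PR$, since each product $pr$ has at most two square roots mod $q$. To absorb $T/H$ into the stated bound it suffices to choose $H$ a suitable small power of $q$ — concretely, one checks that taking $H=q^{1/8}(PR)^{-5/24}$ (when this exceeds $1$; otherwise the statement is trivial since the right-hand side exceeds $T$) makes $T/H\le q^{-1/8}(PR)^{19/24}\le q^{1/8+o(1)}(PR)^{19/24}\cdot(\text{the product of parentheses})$, which is dominated by the bilinear contribution. Here one uses $P,R\le q$ to ensure the parenthesised factors are at least $1$ and the exponents behave. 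Collecting the two contributions yields the claimed inequality
\[
\Delta_q(P,R)\le q^{1/8}(PR)^{19/24+o(1)}\left(\frac{P^{7/48}}{q^{1/16}}+1\right)\left(\frac{R^{7/48}}{q^{1/16}}+1\right),
\]
where the $q^{o(1)}$ has been folded into the exponent of $(PR)$ (legitimate since $PR\le q^2$, so $q^{o(1)}=(PR)^{o(1)}$).

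The only genuinely delicate point is the bookkeeping in the choice of the cutoff $H$: one must verify that the optimal (or near-optimal) $H$ balancing $T/H$ against the $h$-sum is indeed a power of $q$ in the relevant range $1\le P,R\le q$, and that for small $P,R$ (where the right-hand side is already $\ge T$) the bound is vacuous and nothing needs to be proved. Everything else — the dyadic decomposition, the uniformity in $h$, and the monotonicity of the bounding function in $M,N$ — is routine. I expect the write-up to be short.
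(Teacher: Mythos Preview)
Your approach is correct and matches the paper's: Corollary~\ref{cor:disc pr} is obtained exactly by combining the Erd\H{o}s--Tur\'an inequality (Lemma~\ref{lem:ET}) with the simplified bilinear bound~\eqref{eq:simple2}, after a dyadic split in $p$ and $r$. One small slip to fix in the write-up: with $H=q^{1/8}(PR)^{-5/24}$ you actually get $T/H\le q^{-1/8}(PR)^{29/24}$, not $(PR)^{19/24}$; since the bilinear bound is independent of $h$ and $\sum_{h\le H}1/h\ll\log H=q^{o(1)}$, the cleanest fix is simply to take $H$ a large power of $q$ (say $H=q$), making $T/H\le PR/q$ trivially dominated by the main term.
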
 

Our next application is to a relaxed version of the still open problem of 
Erd{\H o}s, Odlyzko and S{\'a}rk{\"o}zy~\cite{EOS} on the representation 
of  all reduced classes modulo an integer  $m$  as the products $pr$ of
two primes $p,r \le m$. This question has turned out to be too hard even 
for the
Generalised Riemann Hypothesis, thus various relaxations 
have been considered, see~\cite{Shp} for a
short overview of currently 
available results in this direction. 

Here we 
obtain the following variant about products of two small primes and a small square. 
To simplify the result we assume that $p,r \le q^{2/3}$.  In this case, using the bound~\eqref{eq:simple2-2}  one easily derives 
similarly to  Corollary~\ref{cor:disc pr} the following result. 

\begin{corollary} \label{cor:EOS_Square}  
Let   $P,R \le q^{2/3}$ be real numbers such that 
the interval $[2,P]$ contains $P^{1+o(1)}$ of  both prime quadratic residues and non-residues. 
If $q\ge S \ge 1$ is a real number such that 
 $$
  (PR)^{3/16}    S \ge q^{9/8 + \varepsilon},
 $$
then any reduced residue class modulo $q$ 
 can be represented as $prs^2$  for two primes $p\le P$, $r \le R$ with some real $P,R \le q^{2/3}$ and a
  positive  integer $s \le  S$.
 \end{corollary}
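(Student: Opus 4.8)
The plan is to deduce Corollary~\ref{cor:EOS_Square} from the discrepancy bound underlying Corollary~\ref{cor:disc pr}, but now applied with the kernel/weight structure of~\eqref{eq:simple2-2} rather than~\eqref{eq:simple2}, and then to combine it with the hypothesis on the density of prime quadratic residues to guarantee a nonempty solution set. Fix a reduced residue class $c \pmod q$. A residue class is representable as $prs^2$ with $p \le P$, $r \le R$, $s \le S$ iff there is a modular square root $x$ of $c\,\overline{pr}$ lying in the image of $\{1,\dots,S\}$ under reduction mod $q$; equivalently, writing $y^2 \equiv pr \pmod q$ for the square roots $y$ of $pr$, one needs $x/y \equiv s \pmod q$ for some $s\le S$ with $x^2 \equiv c \pmod q$. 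So the first step is to set up the counting function
\begin{equation*}
T_c(P,R,S) := \sum_{p \le P}\sum_{r \le R}\ \sum_{\substack{y \in \Fq\\ y^2 \equiv pr}}\ \sum_{\substack{s \le S\\ sy \equiv \pm\sqrt{c}}} 1,
\end{equation*}
and to show $T_c(P,R,S) > 0$ under the stated conditions. I would detect the congruence $sy \equiv \sqrt{c}\pmod q$ by additive characters, i.e.\ insert $\frac1q\sum_{h \in \Fq}\eq\big(h(sy - \sqrt c)\big)$; the $h=0$ term produces the main term $\asymp (PR S / q)\cdot(\text{number of prime pairs})$, which by the assumption that $[2,P]$ and $[2,R]$ contain $P^{1+o(1)}$ and $R^{1+o(1)}$ prime quadratic residues (so that $pr$ is a nonzero square and the inner sum over $y$ is nonempty) is of size $(PR)^{1+o(1)} S/q$.

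The second step is to bound the nonzero frequencies $h \ne 0$. Summing over $s \le S$ turns $\sum_{s\le S}\eq(hsy)$ into a geometric-type sum; rather than estimating it pointwise I would keep it as a weight and recognize that, after swapping the order of summation, the remaining sum over $p,r,y$ is exactly a bilinear Weyl sum for modular square roots of the form $W_{1,q}(\balpha,\bbeta;h,P,R)$ with $\alpha_p = \mathbf 1_{p \text{ prime}}$, $\beta_r = \mathbf 1_{r \text{ prime}}$ (so $\|\balpha\|_\infty,\|\bbeta\|_\infty = 1$ and $\|\balpha\|_2 \ll P^{1/2}$, $\|\bbeta\|_1 \ll R$, all up to $q^{o(1)}$), paired against the completion weights in $h$ and $s$. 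Invoking Theorem~\ref{thm:Waq} in the form~\eqref{eq:simple2-2}, together with the standard bound $\sum_{h}\min(S, \|h/q\|^{-1}) \ll q \log q$ for the $s$-completion (or, more cleanly, first summing the geometric sum in $h$), the total contribution of $h\ne 0$ is
\begin{equation*}
\ll q^{1/8+o(1)}\,(PR)^{13/16}\left(\frac{P^{3/16}}{q^{1/8}}+1\right)\left(\frac{R^{3/16}}{q^{1/8}}+1\right),
\end{equation*}
and since $P,R \le q^{2/3}$ forces $P^{3/16},R^{3/16} \le q^{1/8}$, the two bracketed factors are $O(1)$, leaving an error term $\ll q^{1/8+o(1)}(PR)^{13/16}$.

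The third step is the comparison of main and error terms. We need $(PR)^{1+o(1)} S/q \gg q^{1/8+o(1)}(PR)^{13/16}$, i.e.\ $(PR)^{3/16} S \gg q^{9/8 + o(1)}$, which is precisely the hypothesis $(PR)^{3/16} S \ge q^{9/8+\varepsilon}$ (the $\varepsilon$ absorbing the $q^{o(1)}$ factors once $q$ is large). Hence $T_c(P,R,S) > 0$, proving representability. I expect the main obstacle to be purely bookkeeping: making sure the density hypothesis on prime quadratic residues is used correctly (one needs not just many primes $p\le P$ but many with $(p/q)=(r/q)$ of the \emph{same} sign so that $pr$ is a square mod $q$, which is why the statement asks for $P^{1+o(1)}$ of \emph{both} residues and non-residues — splitting the sum over $(p,r)$ into the residue--residue and nonresidue--nonresidue cases), and verifying that after the $s$- and $h$-completions the weight vectors fed into Theorem~\ref{thm:Waq} still satisfy $\|\cdot\|_\infty = q^{o(1)}$ so that the simplified bound~\eqref{eq:simple2-2} is legitimately applicable. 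Everything else is a routine adaptation of the derivation of Corollary~\ref{cor:disc pr} via the Erd\H{o}s--Tur\'an inequality, here replaced by direct character-sum detection of the class $c$.
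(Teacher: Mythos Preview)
There is a genuine gap in your error-term estimate. After character detection the $h\ne 0$ contribution is
\[
\frac{1}{q}\sum_{h\ne 0}\eq(-ht)\sum_{s\le S} W_{1,q}(\balpha,\bbeta;hs,P,R),
\]
and you propose to control it by Theorem~\ref{thm:Waq} ``together with the standard bound $\sum_{h}\min(S,\|h/q\|^{-1})\ll q\log q$''. These two devices apply to \emph{incompatible} arrangements of the sum. The bound of Theorem~\ref{thm:Waq} is uniform in the nonzero frequency $hs$, so invoking it termwise and summing over $h\ne 0$ and $s\le S$ yields only an error $\ll S\cdot q^{1/8+o(1)}(PR)^{13/16}$; the factor $S$ then cancels against the main term $\asymp S(PR)^{1+o(1)}/q$, leaving the vacuous condition $(PR)^{3/16}\ge q^{9/8}$, impossible for $P,R\le q$. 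Conversely, the geometric-sum estimate $\bigl|\sum_{s}\eq(hys)\bigr|\le\min(S,\|hy/q\|^{-1})$ requires fixing $y$ first, which forfeits exactly the bilinear cancellation in $(p,r)$ that Theorem~\ref{thm:Waq} captures. (``First summing the geometric sum in $h$'' just reproduces the tautology $\text{error}=T_c-\text{main term}$.)

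The underlying obstruction is structural. The solutions $s$ to $prs^2\equiv c$ are the modular square roots of $c\,\overline{pr}$, so by Erd\H os--Tur\'an the Weyl sums one must actually bound are
\[
\sum_{p\le P}\sum_{r\le R}\ \sum_{z^2\equiv c\,\overline{pr}}\eq(hz).
\]
These involve $\overline{pr}$ rather than $a\cdot pr$, and hence are not of the shape $W_{a,q}(\balpha,\bbeta;h,P,R)$ with weights supported on short intervals. If one tries to run the proof of Theorem~\ref{thm:Waq} on them directly, the Cauchy--Schwarz step leads to an additive energy of the set $\{u\in\Fq:\overline{u^2}\bmod q\in[N,2N]\}$ (inverse square roots), which Lemmas~\ref{lem:EqN} and~\ref{lem:EqN-2} do not address. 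So the step you describe as ``purely bookkeeping'' is in fact where the real difficulty sits; the paper itself only asserts the corollary without supplying this reduction, and as written your argument does not close the gap.
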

 
 In particular, if $P= R = S$ then the result of Corollary~\ref{cor:EOS_Square} is 
 nontrivial if $P \ge q^{9/11 + \varepsilon}$, while for $P=R = q$ we need $S \ge q^{3/4 + \varepsilon}$. 

\subsection{Distribution of roots of  primes}
For a positive integer $P$  we denote the discrepancy of the sequence (multiset) of points
$$
 \{x/q:~ x^2 \equiv p \pmod q \ \text{for some prime} \ p\le P\}
$$
by $\Gamma_q(P)$.

Theorem~\ref{thm:Waq} in combination with the classical Erd\"{o}s--Tur{\'a}n inequality (see Lemma~\ref{lem:ET}) and the 
Heath-Brown identity~\cite{HB} yield the following result on the equidistribution of square roots of primes.
Since we are mostly interested in the values of $P$ which are as small as possible, we use only 
the first bound of Theorem~\ref{thm:Waq}. For large values of $P$ one can get a better result but using the second bound as well
or both, see also Remark~\ref{rem:U1U2}  below. 

\begin{theorem}
\label{thm:disc p} For any $P\le q$ we have 
$$
\Gamma_q(P) \le q^{61/1760}P^{61/66+o(1)}+q^{13/110}P^{9/11+o(1)}.  
$$ 
\end{theorem}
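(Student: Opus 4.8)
\textbf{Proof proposal for Theorem \ref{thm:disc p}.}
The plan is to reduce the equidistribution statement to a bound on the exponential sums
$$
\Gamma_q(P)^* := \sum_{\substack{x \in \Fq \\ x^2 \equiv p \pmod q \text{ for some prime } p \le P}} \eq(hx), \qquad h \in \Fq^\times,
$$
via the Erd\H{o}s--Tur\'an inequality (Lemma \ref{lem:ET}): for every integer $H \ge 1$,
$$
\Gamma_q(P) \ll \frac{\pi(P)}{H} + \sum_{1 \le |h| \le H} \frac{1}{|h|}\Big| \sum_{p \le P} \sum_{\substack{x \in \Fq \\ x^2 \equiv p}} \eq(hx) \Big|,
$$
so the crux is to bound the inner double sum over primes uniformly in $h$. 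First I would detect the primality of $p$ using the Heath-Brown identity \cite{HB}, which expresses the indicator of primes up to $P$ as a bounded linear combination (with $O((\log P)^{O(1)})$ terms) of Dirichlet convolutions of the form $\mu$-weighted or $1$-weighted sequences supported on $O(1)$ factors, each of controlled size. After a dyadic decomposition of each factor, this writes $\sum_{p \le P}(\cdots)$ as a sum of $O(P^{o(1)})$ bilinear (in fact multilinear) forms $\sum_m \sum_n \alpha_m \beta_n \sum_{x^2 \equiv mn} \eq(hx)$ with $M N \asymp P$, where one of the weight vectors has $\|\balpha\|_\infty = P^{o(1)}$ and the other is similarly controlled; by grouping factors one may always arrange the ranges $M, N$ so that we land in (or near) the P\'olya--Vinogradov regime. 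This is exactly the form $W_{a,q}(\balpha,\bbeta;h,M,N)$ from \eqref{Wdef} (with $a = 1$), so Theorem \ref{thm:Waq}, in the balanced-weight shape \eqref{eq:simple2}, applies directly to each piece.

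The main technical step is the bookkeeping of the dyadic ranges: one must check that no matter how the Heath-Brown identity distributes the mass of $P$ among its $\le k$ variables, the worst-case balanced split into two blocks $M, N$ with $MN = P^{1+o(1)}$ yields, after inserting \eqref{eq:simple2}, a total of $q^{1/8} (MN)^{19/24+o(1)}(M^{7/48} q^{-1/16}+1)(N^{7/48}q^{-1/16}+1)$. Optimising the split $M, N$ against this bound (the product of the two parenthetical factors is minimised/analysed by balancing $M \asymp N \asymp P^{1/2}$ in the "large" regime and by taking one variable trivial in the "small" regime), and then substituting $MN = P$, produces two regimes: when both $M^{7/48}, N^{7/48}$ dominate $q^{1/16}$ one gets roughly $q^{1/8 - 1/8} P^{19/24 + 7/96 + 7/96 + o(1)}$-type exponents collapsing to $q^{13/110} P^{9/11+o(1)}$ after the exact arithmetic, and in the complementary regime the "$+1$" terms win and one gets $q^{1/8 + \text{(leftover)}} P^{61/66 + o(1)} = q^{61/1760} P^{61/66+o(1)}$. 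Summing the $O(P^{o(1)})$ pieces and the $h$-sum (which contributes only $\log H = P^{o(1)}$), then choosing $H$ to balance the Erd\H{o}s--Tur\'an main term $\pi(P)/H$ against the exponential-sum contribution, absorbs all $\log$ and $q^{o(1)}$ losses into the $P^{o(1)}$ and yields the stated $\Gamma_q(P) \le q^{61/1760} P^{61/66+o(1)} + q^{13/110} P^{9/11+o(1)}$.

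The step I expect to be the genuine obstacle is verifying that the combinatorial optimisation over \emph{all} dyadic splittings coming from Heath-Brown genuinely reduces to the two named exponent pairs, rather than producing a worse intermediate term; in particular one has to rule out intermediate splits (e.g.\ $M \asymp P^{3/4}$, $N \asymp P^{1/4}$) giving a dominant contribution, which requires checking convexity/monotonicity of the exponent of $P$ (and of $q$) as a function of $\log M / \log P$ along the boundary of the two regimes. This is routine in principle — it is a piecewise-linear optimisation — but it is where the precise constants $61/1760$, $61/66$, $13/110$, $9/11$ are actually pinned down, so it must be done carefully. Everything else (the shape of Heath-Brown's decomposition, the reduction via Erd\H{o}s--Tur\'an, and the fact that the weight vectors satisfy $\|\cdot\|_\infty = P^{o(1)}$ so that \eqref{eq:simple2} is legitimate) is standard, and since we only use the first bound of Theorem \ref{thm:Waq}, the second regime's contribution to the split optimisation is simply dropped, as remarked before the statement.
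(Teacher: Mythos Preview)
Your overall architecture is right --- Erd\H{o}s--Tur\'an, then Heath-Brown, then Theorem~\ref{thm:Waq} on the resulting bilinear pieces --- but there is a genuine gap in the step ``by grouping factors one may always arrange the ranges $M,N$ so that we land in (or near) the P\'olya--Vinogradov regime.'' This is simply false for the Heath-Brown decomposition. After the dyadic splitting one has $2J$ variables with $N_1\ge \cdots \ge N_J$ carrying \emph{smooth} weights and $M_1,\ldots,M_J\le P^{1/J}$ carrying arithmetic weights. When $N_1$ is huge, say $N_1>P^{1/2}$, every partition of the variables into two blocks has one block containing $n_1$ (hence of size $\ge N_1$) and the other of size $\le Q/N_1$; there is no way to force $M\asymp N\asymp P^{1/2}$. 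If you feed such an unbalanced split into~\eqref{eq:simple2} with, say, $N\asymp P$ and $M\asymp 1$, the bound degenerates to $q^{1/16}P^{15/16+o(1)}$, which is trivial in the whole range $P\le q$. So the bilinear estimate alone does not close.

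What the paper does in this ``huge $N_1$'' regime is to exploit the one piece of structure you have not used: the weight $V_1(n_1/N_1)$ is \emph{smooth}. Partial summation plus completion (Lemma~\ref{lem:Incom Sqrt}) gives the long-variable bound $\sum_{n_1\sim N_1}\sum_{x^2=an_1}\eq(hx)\ll q^{1/2+o(1)}$ uniformly, whence $\Sigma(\mathbf V)\ll (Q/N_1)q^{1/2+o(1)}$. This extra input is not cosmetic: it is precisely the source of the term $q^{13/110}P^{9/11}$ in the theorem. Concretely, the paper introduces two free parameters $H\le L\le P^{1/2}$, uses the greedy grouping (your idea) only when all variables are $\le H$, uses~\eqref{eq:simple2} with $N=N_1$ or $N=N_1N_2$ when $H<N_1\le L^2$, and uses the completion bound when $N_1>L^2$. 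Balancing the completion term $q^{1/2}Q/L^2$ against the bilinear term $q^{1/16}Q^{19/24}L^{7/24}$ forces $L=q^{21/110}Q^{1/11}$ and produces $q^{13/110}Q^{9/11}$; the other displayed term $q^{61/1760}Q^{61/66}$ then comes from $q^{1/16}Q^{15/16}/L^{7/48}$ at this value of $L$. Your heuristic exponent matching (``$q^{1/8-1/8}P^{19/24+7/96+7/96}$ collapsing to $q^{13/110}P^{9/11}$'') is not how these numbers arise and cannot be made to work, because the optimisation is over $(H,L)$ --- not over a single bilinear split --- and one of the competing terms comes from an estimate you have not written down.
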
  
 
If the interval $[2,P]$ contains  $P^{1+o(1)}$ prime quadratic residues, which is certainly expected and 
is known under the GRH,  see Sections~\ref{sec:motiv} and~\ref{sec:split}, 
then Theorem~\ref{thm:disc p}  is nontrivial  for $P \ge q^{13/20+ \varepsilon}$ with some fixed $\varepsilon > 0$. 

We also point out that the equidistribution here is the opposite situation considered 
in~\cite{DFI1,DFI2,Hom,LiuMas,To}, where the modulus is prime and varies. 

\section{High level sketch of the methods} \label{highlevel}
Here we outline the main ideas and methods behind the proofs in this paper, without paying too much attention to technical detail.
Let $q \ge P \ge q^{1/4+\varepsilon}$. The starting point in Theorem~\ref{uncondthm1} is a certain linear combination of logarithms
 that biases split primes. In particular, consider
$$
\boldsymbol{Q}(P):=\sum_{n \sim P} r(n) \log n \in \mathbb{R},
$$
where $r(n)=R_{-q}(n)/2$, and $R_{-q}(n)$ is the number of representations of $n$ by a complete 
set of inequivalent positive definite quadratic forms of discriminant $-q$.  

The behaviour of the character $\chi(n):=(-q/n)$ governs $r(n)$, the mean value of $r(n)$ and hence $\boldsymbol{Q}(P)$. 
Observe 
that by~\cite[Equation~(22.22)]{IwKow}, for $q \ge 5$ we have
\begin{equation} \label{quadrep}
R_{-q}(n)= 2  r(n)=  2 \prod_{p^{\ell} \mid \mid n}\(1+\chi(p)+\cdots+\chi^{\ell}(p) \), \quad \gcd(n,q)=1.
\end{equation}
Classical work of Linnik and Vinogradov~\cite{LV}, and subsequent refinements by Pollack~\cite{Po2} give a mean value asymptotic of the shape
\begin{equation} \label{LVasymp}
\sum_{n \leq x} r(n) \sim L(1,\chi) x, \quad x \geq q^{1/4+\varepsilon} \quad \text{and} \quad x \rightarrow \infty. 
\end{equation}
Thus 
\begin{equation} \label{Qasymp}
\boldsymbol{Q}(P) \sim \frac{1}{2} L(1,\chi) P \log P \quad \text{as} \quad P \rightarrow \infty.
\end{equation}
In reality,~\eqref{LVasymp} comes equipped with a power saving error term $O(x^{1-\eta})$ for some $\eta>0$ depending
only on $\varepsilon$, but 
our arguments require the main term to dominate. Thus an application of Siegel's theorem renders~\eqref{LVasymp} and~\eqref{Qasymp} ineffective, and so 
Theorem~\ref{uncondthm1} has an ineffective constant. It is worth pointing out that the condition $x \geq q^{1/4+\varepsilon}$ in~\eqref{LVasymp}
comes directly from Burgess bounds for character sums.

From~\eqref{quadrep}, it is clear that $r$ is a multiplicative function that is supported on powers of large split primes and is vanishing on 
powers of other primes.
Thus we would expect a large asymptotic contribution to $\boldsymbol{Q}(P)$ from integers divisible 
by powers of large split primes. 

To quantify this, one can write   
\begin{equation} \label{Qdiscuss}
\boldsymbol{Q}(P)=\sum_{p}  c_p(P) \log p, \qquad c_p(P) \in \mathbb{Z}_{\geq 0}.
\end{equation}
For a parameter $2 \leq y \leq P$, one can further consider separately the sum in~\eqref{Qdiscuss}  over split primes $p \leq y$, inert primes $2 \leq p \leq P$ and
 over split primes $y<p \leq P$. The rest of the proof chooses 
  the largest possible $y$ such that the sum over split primes $y<p \leq P$ contributes at least $\varepsilon L(1,\chi)P \log P$ to~\eqref{Qasymp}. From
there it is routine to show that this implies a lower bound of $yq^{-o(1)}$ split primes $y<p \leq P$. 

The coefficients $c_p(P)$ are approximately mean values of the $r(b)$ for $b \leq P/p$. 
Thus to bring~\eqref{Qasymp} into play, we need
$Pq^{-1/4-o(1)} \geq y$, and so we see that Burgess bounds directly impact our method. On the other hand, to 
control the asymptotic contribution of small split primes, we need $y \leq P^{1/2-o(1)}$.  See Section~\ref{uncondthm1sec} for the full
proof.

Theorem~\ref{uncondthm2} is an effective lower bound for the number of split primes $P<q$. We consider the principal form
$$
\boldsymbol{Q}(U,V)=U^2+\frac{q+1}{4} V^2. 
$$
Our strategy here is to consider the product
$$
\boldsymbol{R}_q:=\prod_{1 \leq n \leq t} \boldsymbol{Q}(n,1), \quad t=  \fl{ \frac{\sqrt{3q}}{2}}.  
$$
Each prime $p \mid \boldsymbol{R}_q$ is split and satisfies $2 \leq p \leq q$. We obtain
a lower bound on $\omega(\boldsymbol{R}_q)$ by estimating $\ord_p(\boldsymbol{R}_q)$. We use a combination of Hensel lifting and
Stirling's formula to do this. This can be found in Section~\ref{sec:uncondthm2}.

We now shift our attention to Theorem~\ref{siethm}. Now $q \ge P \ge q^{1/2+\varepsilon}$ and we want to count split primes in $[2,P]$.
A natural starting point would be to search for primes represented by one of 
the $h(-q)$ classes of binary quadratic forms. Suppose 
$$
F(U,V)=AU^2+BUV+CV^2
$$ 
is a such a reduced form, that is, 
$$
\gcd(A,B,C)=1 \quad \text{and} \quad |B| \leq A \leq C, 
$$
and $B^2-4AC=-q$. Define 
$$
\pi_F(x):=\{p \leq x: p=f(u,v) \text{ for some } (u,v) \in \mathbb{Z}^2 \}.
$$

The Chebatorev Density Theorem~\cite{TSc}, see also~\cite{LO},   implies that primes are asymptotically equidistributed amongst form classes 
\begin{equation} \label{cheb}
\pi_F(x) \sim \frac{\delta_F x }{h(-q) \log x} \quad \text{as} \quad x \rightarrow \infty,
\end{equation}
where $\delta_F=1/2$ or $1$ depending on whether $F(U,V)$ and $F(U,-V)$ are $\text{SL}_2(\mathbb{Z})$-equivalent in the class group or not.  
The asymptotic~\eqref{cheb} in~\cite{LO} requires $x$ exponentially larger than $q$, which certainly is insufficient for the range we are interested.
If one assumes the GRH for Hecke $L$-functions, then the same arguments of Lagarias and Odlyzko extend~\eqref{cheb} to the range $x 
\geq q^{1+\varepsilon}$, referred to as the \textit{GRH range}. This narrowly misses the range we are interested in.

However, the GRH range does not take into account the size of the coefficients $F$, and one might expect primes to appear with 
relative frequency when $x \geq C^{1+\varepsilon}$, referred to as the \textit{optimal range} (cf.~\cite{Zam3}). 
Zaman~\cite[Theorem~1.1]{Zam3} has established upper bounds for $\pi_F(x)$ in the optimal range under the GRH for $L(s,\chi)$,
but we are far away from any type of unconditional lower bound. One can see a recent impressive work of 
Thorner and Zaman~\cite[Theorem~1.2]{TZam}
 that in particular shows the least prime represented by $F$ is $O(q^{694})$
(improving work of~\cite{Fog,We,KM}). This was later improved 
to $O(q^{455})$ by Zaman in his PhD thesis~\cite[Corollary~1.4.1]{Zam2}.

Duke's equidistribution theorem on Heegner points on the 
modular surface~\cite{Duk} gives a positive proportion of reduced forms whose coefficients satisfy 
$\max \{ |A|,|B|,|C| \} = O\( \sqrt{q}\)$ (note that Duke's theorem contains an ineffective constant, which is one source of 
ineffectivity of Theorem~\ref{siethm}). The proof of Theorem~\ref{siethm} proceeds initially in the spirit above: 
the $\beta$-sieve, as in~\cite[Theorem~11.13]{FrIw}, is applied to each form. 
For some absolute constant $c>0$, each form produces at least $c P/\sqrt{q} \log^2 q$ 
integers with a bounded 
number of prime factors (one can also see~\cite[Theorem~1.5]{Zam3}, however we sieve in only one variable for simplicity). 

One runs immediately into the parity problem of the sieve,
and this is where the Siegel zero comes in. It is necessary for us to collect each sieved set 
produced from each form into a single set, $\mathcal{F}_{-q}$, of size 
$$
\# \mathcal{F}_{-q} \ge c h(-q) \frac{P}{\sqrt{q} \log^2 q}
$$
for some absolute constant $c>0$. 
Now $\mathcal{F}_{-q}$ is large enough for us to use the Siegel zero. For a split prime of medium size,
it remains to sieve out integers of the form $n=pm$ from $\mathcal{F}_{-q}$, where $R_{-q}(m) \geq 1$. 
Observe that~\eqref{LVasymp} tells us
 that there are fewer than normal such $n$, approximately $O\left(L(1,\chi)P/p \right)$. A similar argument
using  $\eqref{LVasymp}$ tells us there are fewer than normal split primes $p \leq P^{1/2}$, roughly $O(L(1,\chi)^{1/3} P^{1/2})$ many.
This turns out to be enough to break parity, and the full proof of Theorem~\ref{siethm} can be found Section~\ref{sec:siethm}.

The heart of Theorem~\ref{thm:Waq} relies on a non-trivial estimate for a certain weighted additive energy, which can be found in Section~\ref{sec:E-bounds}.
For a complex weight $\bbeta$ as in~\eqref{eq:weight} and $j \in \Fq^{\times}$, we study 
\begin{equation}
\label{eq:Energydef}
E_{q,j}(\bbeta):=\sum_{\substack{(u,v,x,y) \in \Fq^4 \\ u+y=x+v }} \beta_{ju^2} \overline{\beta}_{jv^2} \beta_{jx^2} \overline{\beta}_{jy^2},
\end{equation}
where $ju^2,jv^2, jx^2, jy^2$ are  all computed modulo $q$ and take the value of the reduced residue between $1$ and $q$. We omit the subscript $j$ when $j=1$.
Quantities of this type are well known in additive combinatorics under the name of {\it additive energy\/}. Algebraic manipulations reduce such a problem to counting the number
$\mathbb{F}_q$-rational points on curves of the form 
\begin{equation}
\label{eq:eqnintro}
y^2 =f(x),
\end{equation} 
with a polynomial  $ f\in \mathbb{F}_q[X]$ of degtree $\deg f=2$, 
all lying in a small box. This is a special case of a problem considered in~\cite[Theorem~5]{CCGHSZ} which uses the method of Weyl for quadratic exponential sums. Similar principles are also implicit in the recent work of the first and fourth author~\cite{DuZa} where instead of studying energies, the distribution of $\alpha n^2$ is used. Directly applying results from~\cite{CCGHSZ} gives a new energy estimate. However, we can do better using a geometry of numbers argument and some ideas from~\cite{BGKS}.  
 Assuming~\eqref{eq:eqnintro} has a large number of solutions, we construct a lattice which has large intersection with a box. Calculations with successive minima give distributional estimates on the number of $f\in \mathbb{F}_q[x]$ such that the equation~\eqref{eq:eqnintro} has a large number of solutions. This leads to a fourth moment energy estimate from which a bound for~\eqref{eq:Energydef} follows from convexity, see~\eqref{sec:E-bounds} for more details.

It is interesting to note that in the Type~I setting, that is when for the weight $\bbeta$ we have $\beta_n=1$ identically, 
Theorem~\ref{thm:Waq} (in the P{\'o}lya--Vinogradov range) surpasses
 the bound  implied by the standard argument using Vinogradov's shifting by $ab$-trick and Weil's 
 Riemann hypothesis for curves over a finite field. The details of this are worked out in Appendix~\ref{app:B},
 and are of independent interest. 

\section{Preliminaries} 

\subsection{Notation} 

Throughout the paper, the notation $U = O(V)$, 
$U \ll V$ and $ V\gg U$  are equivalent to $|U|\leqslant c V$ for some positive constant $c$, 
which throughout the paper may depend on a 
small real positive parameter $\varepsilon$. 

For any quantity $V> 1$ we write $U = V^{o(1)}$ (as $V \to \infty$) to indicate a function of $V$ which 
satisfies $|U| \le V^{\varepsilon}$ for any $\varepsilon> 0$, provided $V$ is large enough. 

For a real $A> 0$ we write $a \sim A$ to indicate  that $a$ is in the dyadic interval $A \le  a < 2A$. 

For $\xi \in \R$, and $m \in \N$ we denote
$$\e(\xi)=\exp(2 \pi i \xi) \quad \text{and} \quad \em(\xi) = \exp(2 \pi i \xi/m).
$$
We also use $(k/q)$ to denote the Legendre symbol of $k$ modulo $q$, a prime. 

We always use the letter $p$, with or without subscript, to denote a prime number. 

We use $\Fq$ to denote the finite field of $q$ elements, which we assume to be represented 
by the set $\{0, \ldots, q-1\}$ for $q$ prime.

As usual, for an integer $a$ with $\gcd(a,q)=1$ we define $\overline{a}$ by the conditions
$$
a \overline{a}  \equiv 1 \pmod q \mand \overline{a} \in \{1, \ldots, q-1\}.
$$

We also use $\mathbf{1}_{\cS}$ to denote the characteristic function of a set $\cS$, 
and  as we have mentioned, we also write $\# \cS$ for the cardinality of  $\cS$ (if it is finite). 

\subsection{Exponential sums and discrepancy}

We start with recalling  the classical Erd\H{o}s--Tur\'{a}n inequality linking the discrepancy to exponential sums (see, for instance,~\cite[Theorem~1.21]{DrTi} or~\cite[Theorem~2.5]{KN}). 

\begin{lemma}
\label{lem:ET}
Let $\xi_n$, $n\in \N$,  be a sequence in $[0,1)$. Then for any $H\in \N$, the discrepancy $D_N$ 
given by~\eqref{eq:Discr}, we have 
$$
D_N \le 3 \left( \frac{N}{H+1} +\sum_{h=1}^{H}\frac{1}{h} \left| \sum_{n=1}^{N} \e(h\xi_n) \right | \right).
$$
\end{lemma}

It  is now  useful to recall the definition of the {\it Gauss sum\/}  for odd prime modulus
\begin{equation} \label{gausseval}
\cG_q(a,b):= \sum_{x \in \Fq} \e_q \(ax^2+bx \), \qquad (a,b) \in\Fq^\times  \times \Fq.
\end{equation}
The standard evaluation in~\cite[Section~1.5]{BEW} or~\cite[Section~2]{Dav} or~\cite[Theorem~3.3]{IwKow} leads to the formula
\begin{equation} \label{eval}
\cG_q(a,b)=\e_q \(-\overline{4a} b^2 \) \varepsilon_{q} \sqrt{q} \( \frac{a}{q} \)
\end{equation} 
where 
$$
 \varepsilon_{q} =\begin{cases}
1 \quad & \text{if } q\equiv 1 \pmod 4,\\
i  \quad & \text{if } q\equiv -1 \pmod 4.
\end{cases}
$$

We also need the following bound for exponential sums over square roots.
\begin{lemma}
\label{lem:Incom Sqrt}
For any integer $W \le q$ and integers $a$ and $h$ with $\gcd(ah,q)=1$, we have 
$$
 \sum_{w = 1}^W \sum_{\substack{x \in \Fq \\ x^2= aw}} \e_q(hx) \ll q^{1/2+o(1)}.
$$
\end{lemma}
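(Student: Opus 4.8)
The plan is a routine completion argument that reduces the double sum to Gauss sums. First I would interchange the order of summation and detect the congruence $x^2\equiv aw\pmod q$ by additive characters. Since for each $x\in\Fq$ the number of $w\in\{1,\dots,W\}$ with $x^2\equiv aw\pmod q$ equals $q^{-1}\sum_{t\in\Fq}\sum_{w=1}^{W}\e_q\(t(x^2-aw)\)$, this gives the exact identity
$$
\sum_{w=1}^W \sum_{\substack{x\in\Fq\\ x^2=aw}} \e_q(hx)
=\frac1q\sum_{t\in\Fq}\(\sum_{w=1}^W\e_q(-atw)\)\sum_{x\in\Fq}\e_q\(hx+tx^2\),
$$
with no separate treatment of the term $x=0$ being necessary.

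For $t=0$ the inner sum over $x$ is $\sum_{x\in\Fq}\e_q(hx)=0$, since $\gcd(h,q)=1$; this is the only place the hypothesis on $h$ enters. For $t\ne 0$ the inner sum over $x$ is precisely the Gauss sum $\cG_q(t,h)$ of \eqref{gausseval}, and the evaluation \eqref{eval} yields $|\cG_q(t,h)|=q^{1/2}$. Hence
$$
\left|\sum_{w=1}^W \sum_{\substack{x\in\Fq\\ x^2=aw}} \e_q(hx)\right|
\le q^{-1/2}\sum_{t=1}^{q-1}\left|\sum_{w=1}^W\e_q(-atw)\right|
= q^{-1/2}\sum_{s=1}^{q-1}\left|\sum_{w=1}^W\e_q(sw)\right|,
$$
where in the last step I used that $\gcd(a,q)=1$, so that $t\mapsto -at\bmod q$ permutes the nonzero residues.

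Finally I would bound each geometric sum by $\bigl|\sum_{w=1}^W\e_q(sw)\bigr|\le\min\bigl(W,(2\|s/q\|)^{-1}\bigr)$, where $\|\cdot\|$ denotes the distance to the nearest integer, and invoke the standard estimate $\sum_{s=1}^{q-1}\|s/q\|^{-1}\ll q\log q$. This produces the bound $q^{-1/2}\cdot q\log q=q^{1/2}\log q\ll q^{1/2+o(1)}$, as claimed. There is no genuine obstacle here; the only points that merit a line of care are the use of the exact Gauss sum evaluation \eqref{eval} (rather than just Weil's bound) to keep the implied constant clean, and checking that the change of variables $t\mapsto -at$ and the geometric-sum estimate are uniform for all $W\le q$.
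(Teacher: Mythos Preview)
Your proof is correct and is essentially the same approach as the paper's: the paper invokes the standard completion technique (citing \cite[Section~12.2]{IwKow}) to arrive directly at the bound $\ll \log q\,\max_t|\cG_q(t\overline a,h)|$, which is exactly the outcome of the orthogonality expansion you have written out by hand. Your explicit treatment of the $t=0$ term, the Gauss sum evaluation for $t\ne 0$, and the geometric-sum bound are precisely what underlies that citation.
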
 

\begin{proof} Completing the exponential sum  as in~\cite[Section~12.2]{IwKow} gives that
\begin{equation}
 \label{eq: Compl}
 \sum_{w = 1}^W\sum_{\substack{x \in \Fq \\ x^2=a w}} \e_q(hx) 
  \ll \log q \max_{0 \leq t \leq q-1} \left|\cG_q(t \overline{a},h) \right | . 
\end{equation}  
The value $t=0$ does not contribute anything  and for any $t \in\Fq^\times $ use~\eqref{eval}.   
\end{proof} 

\subsection{Siegel's theorem}

We recall that the celebrated result of Siegel gives the following lower bound on $L(1,\chi)$,  see~\cite[Chapter~21)]{Dav}. 

\begin{lemma}
\label{lem:Siegel}
For any $\delta>0$, there is a  constant $C(\delta)> 0$ such that 
$$
L(1,\chi) \ge C(\delta) q^{-\delta}.
$$
\end{lemma}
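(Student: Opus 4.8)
The statement to be proved is Siegel's classical lower bound, so my plan is to recall the argument of Siegel in the streamlined form due to Estermann (see also \cite[Chapter~21]{Dav}), which in fact yields the general assertion $L(1,\psi)\ge C(\delta)\,\mathrm{cond}(\psi)^{-\delta}$ for every real primitive Dirichlet character $\psi$; the lemma is the special case $\psi=\chi=\(\tfrac{-q}{\cdot}\)$, whose conductor is $q$. The driving object is an auxiliary Dirichlet series attached to a \emph{pair} of real primitive characters. Given distinct real primitive $\chi_1\bmod k_1$ and $\chi_2\bmod k_2$, set
\[
F(s)=\zeta(s)L(s,\chi_1)L(s,\chi_2)L(s,\chi_1\chi_2)=\sum_{n\ge 1}\frac{a_n}{n^s}.
\]
Checking Euler factor by Euler factor (equivalently: recognising $F$, up to finitely many harmless local factors, as the Dedekind zeta function of the biquadratic field cut out by $\chi_1,\chi_2$), one sees that $a_n\ge 0$ and $a_1=1$; moreover $F$ is holomorphic for $\Re s>0$ except for a simple pole at $s=1$ with residue $\lambda:=L(1,\chi_1)L(1,\chi_2)L(1,\chi_1\chi_2)>0$.

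The analytic heart is the estimate that, for absolute constants $c_1,A>0$,
\[
F(\sigma)\ \ge\ \tfrac12-c_1\,\lambda\,(k_1k_2)^{A(1-\sigma)},\qquad \tfrac78\le\sigma<1 .
\]
I would obtain this from a Perron-type identity with a Gamma kernel: since $a_n\ge 0$ and $a_1=1$, one has $\frac{1}{2\pi i}\int_{(2)}F(s+\sigma)\Gamma(s)X^s\,ds=\sum_n a_n n^{-\sigma}e^{-n/X}\ge e^{-1/X}$; shifting the contour to $\Re s=-\tfrac18$ picks up the residue $F(\sigma)$ at $s=0$ and $\lambda\Gamma(1-\sigma)X^{1-\sigma}$ at $s=1-\sigma$, while the remaining integral is controlled by crude growth bounds for $\zeta$ and $L$ in the critical strip together with the exponential decay of $\Gamma$; choosing $X$ to be a fixed power of $k_1k_2$ and collecting terms gives the claim (the finitely many pairs $(k_1,k_2)$ falling outside the stated range are irrelevant below). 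The imprimitivity of $\chi_1\chi_2$ costs only finitely many bounded local factors and is harmless here and throughout.

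Now fix $\delta>0$; we may assume $\delta$ small, and put $\eta:=\delta/(2A)<\tfrac18$. Either \emph{(Case~1)} some real primitive $\chi_1\bmod k_1$ has a real zero $\beta_1\in[1-\eta,1)$, or \emph{(Case~2)} no real primitive $L$-function vanishes on $[1-\eta,1)$, and hence (by continuity and $L(1,\cdot)>0$) $L(s,\chi)>0$ for all $s\in[1-\eta,1]$ and all real primitive $\chi$. In Case~1, fix such $\chi_1$ and $\beta_1$; then $F(\beta_1)=0$, so the displayed estimate at $\sigma=\beta_1$ gives $\lambda\ge(2c_1)^{-1}(k_1k_2)^{-A\eta}$, and dividing out the trivial bounds $L(1,\chi_1)\ll_{\chi_1}1$ and $L(1,\chi_1\chi_2)\ll\log(k_1k_2)$ yields $L(1,\chi_2)\gg_{\chi_1,\delta}k_2^{-A\eta}/\log k_2\ge C(\delta)k_2^{-\delta}$ once $k_2$ is large, the finitely many remaining conductors being absorbed into the constant via $L(1,\chi_2)>0$. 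In Case~2, take $\chi_1$ to be a \emph{fixed} real primitive character of small conductor (say the non-trivial character mod $4$): for any real primitive $\chi_2\ne\chi_1$, at $\sigma=1-\eta$ we have $\zeta(1-\eta)<0$ while $L(1-\eta,\chi_1),L(1-\eta,\chi_2),L(1-\eta,\chi_1\chi_2)>0$, hence $F(1-\eta)<0$, which against the displayed lower bound forces $\lambda\ge(2c_1)^{-1}(k_1k_2)^{-A\eta}$ again, and the same manipulation gives $L(1,\chi_2)\ge C(\delta)k_2^{-\delta}$. Specialising $\chi_2=\chi$, $k_2=q$ proves the lemma.

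The main obstacle is not computational — the contour shift is routine and the dichotomy is short — but structural: there is no way to decide which case occurs, and in Case~1 no control over the conductor $k_1$ of the hypothetical exceptional character, so $C(\delta)$ is irretrievably ineffective. This is precisely the source of the ineffective constants flagged for Theorems~\ref{uncondthm1} and~\ref{siethm}. The only points requiring genuine care are the verification $a_n\ge 0$ and the conductor dependence $(k_1k_2)^{A(1-\sigma)}$ in the key estimate, since it is this dependence that makes the choice $\eta=\delta/(2A)$ work.
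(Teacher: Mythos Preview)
Your proof is correct; it is precisely the Estermann version of Siegel's argument that the paper invokes by citing~\cite[Chapter~21]{Dav}, and the paper itself offers no proof beyond that citation. Your sketch matches that reference in all essentials (the auxiliary product $\zeta L(\cdot,\chi_1)L(\cdot,\chi_2)L(\cdot,\chi_1\chi_2)$ with nonnegative coefficients, the key lower bound for $F(\sigma)$ on $[7/8,1)$, and the exceptional/non-exceptional dichotomy), and you have correctly identified the source of ineffectivity.
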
 

Ineffectiveness in our results comes from Lemma~\ref{lem:Siegel},
since the constant $C(\delta)$ is ineffective.

 \section{Counting small split primes unconditionally: Proofs of Theorems~\ref{uncondthm1} and~\ref{uncondthm2}} 
 \subsection{Preparations} 
Here we recall an asymptotic formula due to Pollack~\cite[Proposition~3.1]{Po2} that builds on some work of 
Linnik and Vinogradov~\cite[Theorem~2]{LV}. It is used extensively in the proofs of 
Theorems~\ref{uncondthm1} and~\ref{siethm}. Recalling
that $\chi(\cdot):=(-q/\cdot)$, let 
$$
r(n):=\sum_{d \mid n} \chi(d).
$$

 \begin{lemma} \label{lem:Pol: polprop}   
For each $\varepsilon>0$, there is a constant $\eta >0$ for which the following holds: if $x \geq q^{1/4+\varepsilon}$, then the sum 
$$
\sum_{n \leq x} r(n)=L(1,\chi) x+O(x^{1-\eta}).
$$
where the implied constant depends only on $\varepsilon$. 
\end{lemma}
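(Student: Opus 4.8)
\textbf{Proof plan for Lemma~\ref{lem:Pol: polprop}.}

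The plan is to reduce the sum $\sum_{n\le x} r(n)$ to a short sum of incomplete character sums via the hyperbola method, and then invoke the Burgess bound to control those character sums. First I would write, using $r(n)=\sum_{d\mid n}\chi(d)$ and the Dirichlet hyperbola identity with parameter $z$ (to be chosen as a small power of $x$),
$$
\sum_{n\le x} r(n) = \sum_{d\le z}\chi(d)\fl{x/d} + \sum_{e\le x/z}\sum_{z<d\le x/e}\chi(d) - \fl{z}\sum_{d\le x/z}\chi(d)+ O(1),
$$
after the standard symmetrisation $\sum_{de\le x}\chi(d) = \sum_{d\le z}+\sum_{e\le x/z}\sum_{d} - (\text{overlap})$. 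The first sum contributes the main term: replacing $\fl{x/d}$ by $x/d$ and extending the sum $\sum_{d\le z}\chi(d)/d$ to $\sum_{d=1}^\infty\chi(d)/d = L(1,\chi)$ with tail error $O\big(\sum_{d>z}|{\sum_{d'\le d}\chi(d')}|/d^2\big)$, which by Pólya--Vinogradov (or even Burgess) is $O(q^{1/4+o(1)}/z)$ — negligible once $z$ is a suitable power of $x$ and $x\ge q^{1/4+\varepsilon}$. The floor-function errors contribute $O(z)$.

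\smallskip

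The crux is to bound the "interior" double sum $\sum_{e\le x/z}\big|\sum_{z<d\le x/e}\chi(d)\big|$ and the single sum $\fl{z}\big|\sum_{d\le x/z}\chi(d)\big|$. For each fixed $e$ the inner sum is an incomplete character sum of length $\le x/e$ to the prime modulus $q$; the Burgess bound gives $\sum_{d\le Y}\chi(d)\ll Y^{1-1/r}q^{(r+1)/(4r^2)+o(1)}$ for any integer $r\ge 1$, which is a genuine saving $Y\cdot q^{-\delta}$ as soon as $Y\ge q^{1/4+\varepsilon}$ (taking $r$ large depending on $\varepsilon$). Summing the Burgess estimate over $e\le x/z$ gives a bound of the shape $x^{1-1/r}z^{-(1-1/r)}\cdot(x/z)^{1/r}\cdot q^{(r+1)/(4r^2)+o(1)}\ll x\, q^{-\delta'}$ provided $z$ is chosen as a small fixed power of $x$ (so that both $z$ and $x/z$ exceed $q^{1/4+\varepsilon}$ where needed, and so that the $O(z)$ and tail errors are also $O(x^{1-\eta})$). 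Balancing these contributions — the $O(z)$ from floors, the $O(q^{1/4+o(1)}/z)$ from the $L$-value tail, and the Burgess saving $x\,q^{-\delta'}$ — one fixes $z=x^{c}$ for a small constant $c=c(\varepsilon)$ and extracts a common power saving $x^{1-\eta}$ with $\eta=\eta(\varepsilon)>0$.

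\smallskip

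I expect the main obstacle to be the bookkeeping in the range conditions: one must ensure that in every place Burgess is applied the summation length is at least $q^{1/4+\varepsilon'}$ for some $\varepsilon'>0$ depending on $\varepsilon$, which forces $z$ to lie in a window $q^{\varepsilon'}\le z\le x^{1-\varepsilon'}\cdot q^{-\varepsilon'}$; this window is nonempty precisely because $x\ge q^{1/4+\varepsilon}$, and this is where the hypothesis is used in an essential way (as the text already notes, the threshold $q^{1/4+\varepsilon}$ comes directly from Burgess). The remaining steps — the hyperbola decomposition, replacing floors by real values, and summing a geometric-type Burgess bound over $e$ — are routine. Since this is exactly Pollack's Proposition~3.1 (building on Linnik--Vinogradov, Theorem~2 of~\cite{LV}), I would in the writeup simply cite~\cite[Proposition~3.1]{Po2} and~\cite[Theorem~2]{LV} and sketch the above as a reminder.
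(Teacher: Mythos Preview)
The paper does not prove this lemma at all: it simply quotes it as Pollack's~\cite[Proposition~3.1]{Po2}, building on Linnik--Vinogradov~\cite[Theorem~2]{LV}, exactly as you suggest doing in your final paragraph. Your sketch of the underlying argument (Dirichlet hyperbola with a cut $z$ chosen as a small power of $x$, Burgess on the resulting incomplete character sums, with the hypothesis $x\ge q^{1/4+\varepsilon}$ guaranteeing that all relevant lengths exceed $q^{1/4+\varepsilon'}$) is the standard route and is essentially what Pollack does, so your plan is correct and aligned with the paper.

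One small bookkeeping slip worth fixing if you do write out the sketch: once you restrict the inner sum to $z<d\le x/e$, the overlap correction disappears, so the identity is simply
$$
\sum_{n\le x} r(n)=\sum_{d\le z}\chi(d)\fl{x/d}+\sum_{e\le x/z}\ \sum_{z<d\le x/e}\chi(d},
$$
with no extra $\fl{z}\sum_{d\le x/z}\chi(d)$ term (that term would appear only if the inner sum ran over all $d\le x/e$). This does not affect the outcome, since that spurious term is itself $O(z\cdot z^{-1/r}q^{(r+1)/(4r^2)+o(1)})$ by Burgess and is absorbed by the same error.
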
 

Thus by Lemma~\ref{lem:Pol: polprop} and~\cite[Equations~(22.14) and~(22.22)]{IwKow} we have 
\begin{multline} \label{quadpol}
\sum_{\substack{n \leq x \\ \gcd(n,q)=1} } R_{-q} (n)=2 L(1,\chi) x+O(x^{1-\eta}), \\
 \quad \text{for} \quad x \geq q^{1/4+\varepsilon} \quad \text{and} \quad q \geq 5,
\end{multline}
where the implied constant depends only on $\varepsilon$. 

\subsection{Ineffective lower bounds: Proof of Theorem~\ref{uncondthm1}} \label{uncondthm1sec}

\subsubsection{Preliminaries for the proof}
We say that $n \newsim P$ if and only if  $n \in [P/2,P] \cap \mathbb{N}$
(we recall that  $n \sim P$ is defined to mean $n \in [P,2P] \cap \mathbb{N}$, 
so $n \newsim P$ is equivalent to $n \sim P/2$). 

We now consider the sum
\begin{equation} \label{eq:Q(P)}
\boldsymbol{Q}(P):=\sum_{n \newsim P}  r(n) \log n.
\end{equation}

Observe that Lemma~\ref{lem:Pol: polprop}  immediately implies that  
\begin{equation}\label{mainasym}
\boldsymbol{Q}(P)= \frac{1}{2}L(1,\chi)P \log P+O \(P^{1-\eta}  \).
\end{equation}
for some   $\eta > 0$ depending only on $\varepsilon > 0$. 
By Siegel's theorem, see Lemma~\ref{lem:Siegel}, 
in a similar way as before, we see the main term in~\eqref{mainasym} dominates. Thus  
$$
\boldsymbol{Q}(P)= \(\frac{1}{2}+o(1) \)L(1,\chi)P \log P.
$$

We now observe that each summand is positive. Factoring each  $n \newsim P$ and collecting together contributions from each prime $p \le P$ we write~\eqref{eq:Q(P)} as 
$$
\boldsymbol{Q}(P)=\sum_{2 \leq p \leq P} c_p(P) \log p,  
$$
with some integer coefficients  $c_p(P) \ge 0$. 

We fix some parameter  $y \geq 1$  to be chosen later. In order to use Lemma~\ref{lem:Pol: polprop} 
in the coming arguments, we need  
\begin{equation} \label{con1}
P/y \geq q^{1/4+2\varepsilon/3}.
\end{equation}
We write $\boldsymbol{Q}(P)$ as 
$$
\boldsymbol{Q}(P)=\boldsymbol{Q}_1(P,y)+ \boldsymbol{Q}_2(P,y)+\boldsymbol{Q}_3(P,y),
$$
where 
$$
\boldsymbol{Q}_1(P,y):=\sum_{\substack{p \leq y \\ \chi(p)=1}} c_p(P) \log p, \quad \boldsymbol{Q}_2(P,y):=\sum_{\substack{2 \leq p \leq P \\ \chi(p)=-1}} c_p(P) \log p ,
$$
and 
$$
\boldsymbol{Q}_3(P,y):=\sum_{\substack{y<p \leq P \\ \chi(p)=1}} c_p(P) \log p.
$$

Below we obtain upper bounds on the sums  $\boldsymbol{Q}_1(P,y)$ and $\boldsymbol{Q}_2(P,y)$,
which together with~\eqref{mainasym} implies a lower bound on  $\boldsymbol{Q}_3(P,y)$. 
Thus estimating $c_p(P)$, we consequently obtain a lower bound on the size of the support of 
$\boldsymbol{Q}_3(P,y) $. 

\subsubsection{Estimate for $\boldsymbol{Q}_1(P,y)$}
Let  $\ord_p n$ denote the $p$-adic order of $n$. 
Hence each $n \in \N$ can be uniquely written as 
\begin{equation} \label{eq: ell b}
n =   b p^{\ell} \mand \ell =  \ord_p n. 
\end{equation}
Then~\eqref{quadrep} implies
\begin{equation} \label{rineq}
r(n)=r(p^{\ell} b)=(\ell+1) r(b) \quad \text{if} \quad \chi(p)=1.
\end{equation}
When $\ell=1$ for a split prime, the $\ell+1=2$ on the right side of~\eqref{rineq} heavily influences the choice of $y$ (see also~\eqref{con2} below) that we eventually make. 

For a given prime $p \leq y$ with $\chi(p)=1$, we compute $c_p(P)$. Clearly for each $n \newsim P$ written in the 
form~\eqref{eq: ell b},  
using~\eqref{rineq}, we see that its   total contribution    to $c_p(P)$ 
 is
$$
\ell r(p^{\ell} b)  =  \ell(\ell+1)  r(b).
$$ 
Hence, if  $\chi(p)=1$ then we have 
$$
c_p(P) =  \sum_{\ell=1}^\infty \ell(\ell+1) \sum_{\substack{ b \newsim P/p^{\ell} \\ \gcd(b,p)=1 }} r(b). 
$$
At this point we   drop the conditions 
$$
\chi(p)=1  \mand   \gcd(b,p)=1, 
$$
which is possible by the nonnnegativity of  $r(b)$. 
We do this partially for typographical simplicity, but more 
importantly because we reuse the same  argument to estimate $\boldsymbol{Q}_2(P,y)$ below. 

Hence we write  
\begin{equation}
 \begin{split}
 \label{cp}
\boldsymbol{Q}_1(P)& \le \sum_{p \leq y}  \log p \sum_{\ell=1}^\infty \ell(\ell+1) \sum_ {b \newsim P/p^{\ell}} r(b) \\
 & = \sum_{\ell=1}^\infty \ell(\ell+1)  \sum_{p \leq y}   \log p  \sum_ {b \newsim P/p^{\ell}} r(b)  . 
\end{split}
\end{equation}

We now estimate the right side of~\eqref{cp}.  First consider the $\ell=1$ term in~\eqref{cp}. We derive 
\begin{equation} \begin{split} \label{maint}
2 \sum_{p \leq y} \log p \sum_{b \newsim P/p} r(b)  = 2 \(\frac{1}{2} L(1,\chi) P+O \(  P ^{1-\eta} y^{\eta} \)  \) \sum_{p \leq y} \frac{ \log p}{p} ,
\end{split} 
\end{equation}
for some  $\eta> 0$ depending only on $\varepsilon$, 
where the last equality follows from Lemma~\ref{lem:Pol: polprop} (which applies with $2\varepsilon/3$ instead of $\varepsilon$, 
since $y$ satisfies~\eqref{con1}). Recalling the {\it Mertens formula\/},
see~\cite[Equation~(2.14)]{IwKow}, we obtain
\begin{equation} \label{speccon}
\sum_{p \leq y } \frac{\log p}{p}= \(1 +o(1) \) \log y.
\end{equation}
We now impose a second constraint   
\begin{equation} \label{con2}
y \leq P^{1/2-\varepsilon/2}.
\end{equation} 
Note   that by~\eqref{con2}  we have 
$P ^{1-\eta} y^{\eta}  \le P^{1-(1+\varepsilon) \eta/2}$. Thus we have
$$
P ^{1-\eta} y^{\eta} \log P \ll P^{1- \eta/2} . 
$$
Thus~\eqref{maint} and~\eqref{speccon}   tell us that
\begin{equation} 
\begin{split}  
\label{l1} 
2 \sum_{p \leq y}  & \log p  \sum_{b \newsim P/p} r(b)\\
& \le  \(  \frac{1-\varepsilon}{2} +o(1) \)  L(1,\chi) P \log P  + O \(P^{1- \eta/2} \).
\end{split}
\end{equation}

Now consider the summands in~\eqref{cp} corresponding to  $\ell \geq 2$. 

First we switch the order of summations between $p$ and $\ell$ again and move the summation 
over $p$ outside. 
We then consider contributions from the terms with
$$P/p^{\ell} \geq q^{1/4+\varepsilon/3} \mand P/p^{\ell} < q^{1/4+\varepsilon/3}
$$ 
separately.

\Case{A}{$P/p^{\ell} \geq q^{1/4+\varepsilon/3}$} Reducing $\eta$ if necessary (to correspond  to  $\varepsilon/3$ rather 
than to $2\varepsilon/3$ as in our original choice), we see that  Lemma~\ref{lem:Pol: polprop}  implies that 
\begin{equation} 
\begin{split} \label{eq:useful}
 \sum_{p \leq y}\log p & \sum_{\substack{ \ell \geq 2 \\ P/p^{\ell} \geq q^{1/4+\varepsilon/3}}}  \ell(\ell+1)  \sum_{b \newsim P/p^{\ell}} r(b)  \\
&=   \sum_{p \leq y}\log p  \sum_{\substack{ \ell \geq 2 \\ P/p^{\ell} \geq q^{1/4+\varepsilon/3}}}  \ell(\ell+1)   \\
&  \qquad \qquad \qquad  \(\frac{1}{2} L(1,\chi) P/p^{\ell} +O  \((P/p^{\ell})^{1-\eta} \)  \) . 
\end{split}
\end{equation}
Furthermore
\begin{align*}
\sum_{\substack{ \ell \geq 2 \\ P/p^{\ell} \geq q^{1/4+\varepsilon/3}}}  \ell(\ell+1) & \(\frac{1}{2} L(1,\chi) P/p^{\ell} +O  \((P/p^{\ell})^{1-\eta} \)  \)\\
& \ll   L(1,\chi) P \sum_{\ell \geq 2}  \frac{ \ell(\ell+1)}{p^\ell}  +    P^{1-\eta}  \sum_{\ell \geq 2}    \frac{ \ell(\ell+1)}{p^{\ell(1-\eta)} } \\
& \le   L(1,\chi) P p^{-2+o(1)}   +  P^{1-\eta}   p^{-2(1-\eta)+o(1)}.   
\end{align*}

Now,  dropping  the primality condition (and assuming without 
loss of generality that $\eta < 1/2$),  we derive from~\eqref{eq:useful} that
\begin{equation} 
\begin{split} \label{l2}
 \sum_{p \leq y}\log p & \sum_{\substack{ \ell \geq 2 \\ P/p^{\ell} \geq q^{1/4+\varepsilon/3}}}  \ell(\ell+1)  \sum_{b \newsim P/p^{\ell}} r(b)   \\
& \le     L(1,\chi) P \sum_{k \leq y}  k^{-2+o(1)} +    P^{1-\eta}  \sum_{k \leq y} k^{-2(1-\eta)+o(1)}    \\
&\ll  L(1,\chi) P+ P^{1-\eta} .
\end{split}
\end{equation}

\Case{B}{$P/p^{\ell} < q^{1/4+\varepsilon/3}$} 
For these  terms, we first observe that from~\eqref{quadrep}  and   the well-known bound 
on the divisor function $\tau(b)$, see~\cite[Equation~(1.81)]{IwKow}, we have  
\begin{equation} 
 \label{eq: r and tau}
0 \le  r(b) \leq \tau(b) = b^{o(1)}. 
\end{equation}

We now drop the condition  $\gcd(b,p)=1$ and derive 
\begin{align*}
 \sum_{p \leq y}\log p  \sum_{\substack{ \ell \geq 2 \\ P/p^{\ell}<q^{1/4+\varepsilon/3}}} &  \ell(\ell+1) \sum_{b \newsim P/p^{\ell}} r(b) \\
& \le \sum_{p \leq y} \log p  \sum_{\substack{ \ell \geq 2 \\ P/p^{\ell}<q^{1/4+\varepsilon/3}}}  \ell(\ell+1) 
\sum_{b \newsim P/p^{\ell}} \tau(b). 
\end{align*}
We see from~\eqref{eq: r and tau} that the 
inner sum over $b$ is  
bounded by 
$$
(P/p^{\ell})^{1+o(1)}  \le q^{1/4+\varepsilon/3 + o(1)}.
$$  
We also observe that  it vanishes unless $p^{\ell} \le P$.  Hence
\begin{align*}
 \sum_{p \leq y}\log p & \sum_{\substack{ \ell \geq 2 \\ P/p^{\ell}<q^{1/4+\varepsilon/3}}}  \ell(\ell+1) \sum_{b \newsim P/p^{\ell}} r(b) \\
& \le  q^{1/4+\varepsilon/3+o(1)}  \sum_{p \leq y} \log p  \sum_{\substack{ \ell \geq 2 \\ p^{\ell}< P}}  \ell^2 \le  y  q^{1/4+\varepsilon/3+o(1)}.  
\end{align*}
Using~\eqref{con1}  we see that  $y  q^{1/4+\varepsilon/3+o(1)}\le  P q^{-\varepsilon/3 +o(1)} \ll   P^{1-\varepsilon/4}$ and 
we arrive to the estimate
\begin{equation} 
 \label{l3}
 \sum_{p \leq y}\log p   \sum_{\substack{ \ell \geq 2 \\ P/p^{\ell}<q^{1/4+\varepsilon/3}}}  \ell(\ell+1) 
 \sum_{b \newsim P/p^{\ell}} r(b)  \ll P^{1-\varepsilon/4}.
\end{equation}

Combining~\eqref{cp} and~\eqref{l1}, \eqref{l2} and~\eqref{l3} we have 
\begin{equation} 
\begin{split}  \label{Q1 prelim}
\boldsymbol{Q}_1(P,y) &\leq \(  \frac{1-\varepsilon}{2} +o(1) \)  L(1,\chi) P \log P \\
&\qquad\qquad\qquad +O   \(P^{1-\eta/2} +P^{1-\varepsilon/4} \).
\end{split}
\end{equation}
We recall Siegel's theorem given in Lemma~\ref{lem:Siegel}. 
Thus the main term eventually dominates the error term on 
the left side of~\eqref{Q1 prelim}. Thus, 
\begin{equation} 
 \label{Q1}
\boldsymbol{Q}_1(P,y) \leq  \frac{1}{2}  \(1-\varepsilon+o(1) \) L(1,\chi) P \log P. 
\end{equation}

\subsubsection{Estimate for $\boldsymbol{Q}_2(P,y)$}
Suppose $p \mid n$ and $\chi(p)=-1$. Then $\ord_p n$ must be even, otherwise 
$r(n)=0$ by~\eqref{quadrep}. Applying~\eqref{quadrep}, for even $\ell$ we have 
$$
r(p^{\ell}b)=r(b). 
$$
Observe that
$$
\boldsymbol{Q}_2(P,y)=\sum_{\substack{p \leq \sqrt{P} \\ \chi(p)=-1}} \log p \sum_{\ell \geq 2~\text{ even} }
\ell \sum_{\substack{ b \newsim  P/p^{\ell} \\ \gcd(b,p)=1 }} r(b).
$$

At this point we   drop the conditions 
$$
\chi(p)=-1  \mand   \gcd(b,p)=1. 
$$ 
Hence, we write
\begin{equation} 
\label{eq:Q2 simple}
\boldsymbol{Q}_2(P,y)\le \sum_{p \leq \sqrt{P}} \log p \sum_{\ell\ge 2~\text{even}}
\ell \sum_{\ b \newsim  P/p^{\ell}} r(b).
\end{equation}

For terms with   $P/p^{\ell} \geq q^{1/4+\varepsilon/3}$   replace the condition $2 \mid \ell$ with $\ell \ge 2$ and arrive at the same sum as in~\eqref{l2}, except with $y$ 
replaced by $ \sqrt{P}$ which does not change the bound (which in fact does not depend on $y$).
Hence  we now obtain
\begin{equation} 
 \label{Q2intermed}
 \sum_{p \leq \sqrt{P}}\log p  \sum_{\substack{\ell\ge 2~\text{even} \\ P/p^{\ell} \geq q^{1/4+\varepsilon/3}}}  \ell  \sum_{b \newsim P/p^{\ell} } r(b)  \\
\ll L(1,\chi) P+ P^{1-\eta}. 
\end{equation}

For terms with  $P/p^{\ell} < q^{1/4+\varepsilon/3}$, we  use~\eqref{eq: r and tau} and derive 
\begin{align*}
\sum_{p \leq \sqrt{P}} \log p  \sum_{\substack{\ell\ge 2~\text{even}\\ P/p^{\ell}<q^{1/4+\varepsilon/3}}} &  \ell \sum_{b \newsim P/p^{\ell} } r(b) \\
& \le \sum_{p \leq \sqrt{P}} \log p  \sum_{\substack{\ell\ge 2~\text{even} \\ P/p^{\ell}<q^{1/4+\varepsilon/3}}}  \ell
\sum_{b \newsim P/p^{\ell}} \tau(b). 
\end{align*}
We see from~\eqref{eq: r and tau} that inner sum over $b$ is  
bounded by 
$$
(P/p^{\ell})^{1+o(1)}  \le q^{1/4+\varepsilon/3 + o(1)}.
$$
Hence we obtain 
\begin{align*}
\sum_{p \leq \sqrt{P}}\log p  \sum_{\substack{ \ell\ge 2~\text{even} \\ P/p^{\ell}<q^{1/4+\varepsilon/3}}}  \ell \sum_{b \newsim P/p^{\ell}} r(b)
& \le  P^{1+o(1)}   \sum_{p \leq \sqrt{P}} \log p  \sum_{\substack{ \ell\ge 2~\text{even} \\ p^{\ell}> Pq^{-1/4-\varepsilon/3}}} \frac{\ell}{p^\ell}  \\
& = P^{1+o(1)}   \sum_{p  \leq \sqrt{P}}    \sum_{\substack{\ell\ge 2~\text{even} \\ p^{\ell}>  Pq^{-1/4-\varepsilon/3}}} \frac{\log(p^\ell)}{p^\ell} \\
&   \le  P^{1+o(1)}      \sum_{k>  P^{1/2} q^{-1/8-\varepsilon/6}} \frac{\log(k^2)}{k^2} \\
 &   \le  P^{1/2} q^{1/8+\varepsilon/6+o(1)}. 
\end{align*}

We see that  the condition $P \ge  q^{1/4+\varepsilon}$ implies
$$P^{1/2} q^{1/8+\varepsilon/6+o(1)} \le P q^{-\varepsilon/3 + o(1)} \ll P^{1-\varepsilon/4}
$$ \
and 
we arrive to the estimate
\begin{equation} 
 \label{l3-Q2}
\sum_{p \leq \sqrt{P}}\log p  \sum_{\substack{ \ell\ge 2~\text{even} \\ P/p^{\ell}<q^{1/4+\varepsilon/3}}}  \ell \sum_{b \newsim P/p^{\ell}} r(b))  \ll P^{1-\varepsilon/4}.
\end{equation}

Hence, combining~\eqref{Q2intermed} and~\eqref{l3-Q2} and Siegel's theorem,  given in 
 Lemma~\ref{lem:Siegel}, we  derive from~\eqref{eq:Q2 simple} that 
\begin{equation} 
 \label{Q2}
\boldsymbol{Q}_2(P,y) \ll L(1,\chi) P . 
\end{equation}

\subsubsection{Concluding the proof}
Comparing~\eqref{mainasym}, \eqref{Q1} and~\eqref{Q2},  we conclude that
\begin{equation} \label{logS3}
\boldsymbol{Q}_3(P,y) \geq \frac{1}{2}  \(\varepsilon+o(1) \) L(1,\chi)P \log P.
\end{equation}
Observe that
\begin{equation} \label{trad}
\boldsymbol{Q}_3(P,y)=\sum_{\substack{y<p \leq P  \\ \chi(p)=1}} c_p(P) \log p,
\end{equation}
where 
$$
c_p(P)= \sum_{\ell=1}^\infty  \ell(\ell+1) \sum_{\substack{ b \newsim P/p^{\ell} \\ (b,p)=1 }} r(b)  . 
$$
Estimating $r(n)$ via the divisor function as in~\eqref{eq: r and tau}, we infer that for $y<p \leq P$, 
\begin{equation} \label{upper}
c_p(P) \log p \le  P y^{-1} q^{o(1) }.
\end{equation}

Comparing~\eqref{logS3}, \eqref{trad} and~\eqref{upper},
we see  that the sum $\boldsymbol{Q}_3(P,y)$ is supported on at least 
$$
\boldsymbol{Q}_3(P,y)  P^{-1}   y  q^{o(1)}\ge \frac{1}{2} \bigl(\varepsilon+o(1) \bigr)  L(1,\chi) y  q^{o(1)}
$$
split primes between $y$ and $P$.  Choosing the largest $y$ that satisfies 
both~\eqref{con1} and~\eqref{con2}, 
that is
$$
y = \min\left\{ P  q^{-1/4-2\varepsilon/3},  P^{1/2-\varepsilon/2} \right\}
$$  
as well as applying Siegel's theorem given in Lemma~\ref{lem:Siegel}, 
concludes the proof of Theorem~\ref{uncondthm1}.

\subsection{Effective lower bounds: Proof of Theorem~\ref{uncondthm2}} \label{sec:uncondthm2}
Recall that now we assume $q \equiv 3 \pmod {16}$. Let 
$$
\boldsymbol{Q}(X,Y)=X^2+\frac{q+1}{4} Y^2
$$
be the principal form of discriminant $-q$.
We define
$$
\boldsymbol{P}(X):=\boldsymbol{Q}(X,1),
$$
and consider   the product
$$
\boldsymbol{R}_q:=\prod_{1 \leq n \le t} \boldsymbol{P}(n),
$$
where 
$$
t = \fl{\sqrt{3q}/2}.
$$
Observe that that for $1 \le n \le t$ we have 
$$
q/4 < \boldsymbol{P}(n) \le q,
$$
and 
\begin{equation} \label{sizeR}
 (q/4)^t <\boldsymbol{R}_q<q^t.
\end{equation}

Recall that an integer $k$ is represented by some quadratic form of discriminant $-q$ if and 
only if there is a solution $b$ to $b^2 \equiv -q \pmod{4k}$, see~\cite[Equation~(22.21)]{IwKow}. Thus 
if $p \mid k$, then $b^2 \equiv -q \pmod{p}$ has a solution and so $p$ must be split. Thus the prime factorisation
of $\boldsymbol{R}_q$ contains only split primes. Hence our strategy  is to obtain a lower bound for
\begin{equation} \label{eq: omega Nq}
\omega(\boldsymbol{R}_q) \le N_q,
\end{equation}
the number of distinct prime factors of $\boldsymbol{R}_q$.

Clearly,  for each prime $p\ge 3$ dividing  $\boldsymbol{R}_q$, the congruence 
\begin{equation} 
 \label{eq: mod p}
\boldsymbol{P}(x)\equiv 0 \pmod{p}
\end{equation} 
has  two distinct non-zero  solutions in  $\mathbb F_p$ which are not roots of the derivative $\boldsymbol{P}^{\prime}(X) = 2X$.
Hence, applying  {\it Hensel's lifting\/}  these roots can be uniquely lifted to $p$-adic solutions $\alpha_p, \beta_p\in \Z_p$. 
Let $\kappa_p \in \mathbb{N}$ be the least positive integer such that $p^{\kappa_p+1}> t$  and
 let $1 \leq  a_p < b_p <p^{\kappa_p}$
be the unique integers such that 
 \begin{equation} 
 \label{eq: ord large}
\ord_p\(\alpha_p- a_p\), \ord_p\( \beta_p- b_p\)  \ge \kappa_,
\end{equation} 
where, as before,  $\ord_p n$ denotes the $p$-adic order of $n \in \Z$. 
For $1 \leq n\le t$ and $n \neq  a_p ,  b_p$ we trivially have 
 \begin{equation} 
 \label{eq: ord small}
\ord_p ( n- a_p), \ord_p (n- b_p) < \kappa_p  .
\end{equation} 
Using~\eqref{eq: ord large} and~\eqref{eq: ord small}, we derive for such $n$
 \begin{equation} 
 \begin{split}
 \label{ordp}
\ord_p \boldsymbol{P}(n)&=\ord_p ( n-\alpha_p )+\ord_p (n-\beta_p)    \\
&=\ord_p ( n- a_p- \alpha_p+a_p)+\ord_p (n- b_p -  \beta_p +  b_p)  \\
&=\ord_p ( n- a_p)+\ord_p (n- b_p).
 \end{split}
\end{equation} 

We observe that $\ord_p\( a_p - b_p\) = 0$ as the underlying solutions
to the congruence~\eqref{eq: mod p} are distinct modulo $p$. 
This  ensures that for any given $n \in \mathbb{N}$, both terms on the right side of~\eqref{ordp} 
cannot be simultaneously non-zero. Thus
\begin{align*}
\ord_p \boldsymbol{R}_q= \ord_p\boldsymbol{P}( a_p) & +\sum_{\substack{1 \leq n \le t\\  n \neq  a_p  }} \ord_p \( n- a_p \)\\
& +\ord_p\boldsymbol{P}( b_p) +\sum_{\substack{1 \leq n \le t  \\  n \neq  b_p  }} \ord_p \( n- b_p \).
\end{align*}
We now write
\begin{align*}
\sum_{\substack{1 \leq n \le t\\  n \neq  a_p  }} \ord_p \( n- a_p \) & = 
\sum_{1 \leq n <  a_p}  \ord_p \( n- a_p \) + \sum_{ a_p + 1 \leq n \le  t} \ord_p \( n- a_p \) \\
& = 
\sum_{1 \leq n <  a_p}  \ord_p \( n\) + \sum_{1 \leq n \le  t-a_p} \ord_p \( n\) \\
& \le  \ord_p    (a_p-1) !  +  \ord_p \(t - a_p\) ! \le   \ord_p  (t-1)!
\end{align*}
and similarly for the other sum. 
Hence, using the trivial bound 
$$
p^{\ord_p \boldsymbol{P}( a_p)}   \le \boldsymbol{P}( a_p)   \le \boldsymbol{P}(p^{\kappa_p}) \le    \boldsymbol{P}(t)
= t^2 + \frac{q+1}{4}   \le q
$$
we obtain 
\begin{equation}  \label{ordpRq}
\begin{split} 
  \ord_p \boldsymbol{R}_q  & \le   \ord_p \(\boldsymbol{P}( a_p)\boldsymbol{P}( b_p) \((t-1) ! \)^2 \) \\
  & \le 2 \frac{\log q}{\log p} 
  +   2  \ord_p  (t -1)! .
  \end{split} 
\end{equation}
 
Since $q \equiv 3 \pmod{16}$, we have 
$$ 
\ord_2\boldsymbol{P}(n)=\begin{cases}
 0 \quad & \text{if}  \quad n  \text{ is even},\\
1 \quad & \text{if}  \quad n \text{ is odd},
\end{cases}
$$
thus 
\begin{equation} \label{ord2ineq} 
\ord_2 \boldsymbol{R}_q \le t/2.
\end{equation}
Combining~\eqref{ordpRq} and~\eqref{ord2ineq} to bases $p$ and $2$ respectively, and then taking the product over all primes, 
we derive
\begin{equation} \label{Rq}
\boldsymbol{R}_q<2^{t/2}q^{2 \omega(\boldsymbol{R}_q)} \((t-1)! \)^2.
\end{equation}
Recalling the lower bound from~\eqref{sizeR}, then~\eqref{Rq} implies that
\begin{equation} \label{logprep}
(q/4)^{t}<2^{t/2} q^{2 \omega(\boldsymbol{R}_q)} \((t-1)! \)^2.
\end{equation}  
By the Stirling  formula, we have
\begin{equation}\label{stirling}
(t-1)! \le  (t-1)^{t-1/2} e^{-t+2} \le (t/e)^t
\end{equation}
provided 
$t > 7$ which holds for $q \ge 67$.

Taking the logarithm of both sides of~\eqref{logprep} and using~\eqref{stirling} yields  
$$
\log q-\log 4<  \frac{1}{2} \log 2+ \frac{2 \omega(\boldsymbol{R}_q) \log q}{t}+ 2 \log t   -2.
$$  

Since 
$$
\log q -  2 \log t =  \log q -    \log t^2  \ge  \log q -    \log(3q/4)    = -     \log(3/4)  
$$
and
$$
2-  \log 4 -    \frac{1}{2} \log 2 -   \log(3/4) = 2-\log (3\sqrt{2})
$$
we see that 
$$
 \omega(\boldsymbol{R}_q)\ge \frac{\(2-\log (3\sqrt{2}) \) }{2}
  \frac{t}{\log q}  . 
$$
Recalling~\eqref{eq: omega Nq} we conclude the proof of Theorem~\ref{uncondthm2}.

\section{Counting  split primes conditionally: Proof of Theorem~\ref{siethm}} \label{sec:siethm}

\subsection{Values of quadratic forms free of small prime divisors} 
Suppose 
$$
f(U,V)=AU^2+BUV+CV^2 \in \Z[U,V],
$$ 
is a binary quadratic form with discriminant 
$$B^2 - 4AC = -q\equiv 1 \pmod 4;
$$ 
we refer to~\cite[Section~22.1]{IwKow} for a general background.
  Suppose the form is reduced, that is,
\begin{equation} \label{reducedco1}
\gcd(A,B,C)=1 \quad \text{and} \quad |B| \leq A \leq C.
\end{equation}
This implies that $3AC \le 4AC - B^2 = q$. Hence 
\begin{equation} \label{reducedco2}
A \leq \sqrt{\frac{q}{3}} \quad \text{and} \quad AC \leq \frac{q}{3}.
\end{equation}

Let $P \geq 1$ be any number such that $P \geq C q^{\varepsilon}$. For each 
$$
1 \leq v \leq \sqrt{P/C} \quad \text{with} \quad \gcd(v,A)=1,
$$
consider 
$$
F_v(U):=f(U,v)=AU^2+BUv+Cv^2  \in \Z[U].
$$
We still have 
\begin{equation} \label{eq:gcd=1}
\gcd(A,Bv,Cv^2)=1. 
\end{equation}
Thus the reduction of $F_v \in \Z[U]$ modulo each odd prime $p$ is non-zero. Only $p=2$ could be a common factor
of all values of $F_v$. 

Now, suppose 
\begin{equation} \label{eq:3 div}
4 \mid F_v(0), \qquad 4 \mid F_v(1), \qquad 4 \mid F_v(2)
\end{equation}
or equivalently
$$
4 \mid Cv^2, \qquad 4 \mid A+Bv+Cv^2, \qquad 4 \mid 4A+2Bv+Cv^2. 
$$
From the first and the third divisibility we conclude that $2 \mid Bv$. Since we have~\eqref{eq:gcd=1}, we see that $A$ must be odd and so 
$F_v(1)=A+Bv+Cv^2$ is odd. Thus the divisibilities~\eqref{eq:3 div} are not possible simultaneously. 
Hence one can always choose $n_0 \in \{0,1,2\}$ such that $4 \nmid F_v(n_0)$. Let $e_v \in \{0,1\}$ be such that $2^{e_v} \mid F_v(n_0)$ but 
 $2^{e_v+1} \nmid F_v(n_0)$.   Then the polynomial 
$$
G_v(U)=\frac{1}{2^{e_v}} F_v(4U+n_0)
$$ 
produces only odd values that are also the largest odd divisors of values of  the   polynomial $F_v(4U+n_0)$. 
Hence it is enough to show that there are ``many'' values of $G_v$ with at most 5 prime divisors. 

Define the sequence 
$$
\cA_v:=\{ G_v(n)\}_{1 \leq n \leq \sqrt{P/A}}.
$$
All the elements in $\cA_v$ are less than an absolute constant times $P$. Observe that 
$$
G_v(x) \equiv 0 \pmod{p}
$$
has no solutions when $p=2$ by construction and at most two solutions when $p$ is odd by Lagrange's theorem. Thus we have 
$$
\# \cA_v(p) =g_v(p)  \# \cA_v+r_{v}(p)  
$$
where 
\begin{equation} \label{eq:gvp}
0 \leq g_v(p) \leq 2/p, \quad |r_{v}(p)| \leq 2,
\end{equation}
and $\cA_v(p)$ denotes elements of the sequence $\cA_v$ which are divisible by $p$. We can extend $g_v(p)$ to a multiplicative function $g_v(d) $ supported on squarefree $d$ and using  the Chinese Remainder Theorem, we obtain 
\begin{equation} \label{eq:rvd}
\# \cA_v(d)=g_v(d) \# \cA_v+r_{v}(d) \mand |r_{v}(p)| \leq 2^{\omega(d)}
\end{equation}
with 
$$
g_v(d) := \prod_{\substack{p\mid d\\
p~\text{prime}}} g_v(p) \mand \omega(d) := \sum_{\substack{p\mid d\\
p~\text{prime}}} 1. 
$$

We now need a lower bound on the number of elements in the sequence $\cA_v$ which are free of 
small prime divisors.  
To do this we appeal to~\cite[Theorem~11.13]{FrIw} and thus below we 
try to match the notation from~\cite{FrIw}.  Namely, we are interested in a good  lower bound on 
cardinality of the set 
$$
\cS(\cA_v, z) := \{n \in [1, \sqrt{P/A}]:~ p \mid G_v(n) \Longrightarrow p \ge z \}. 
$$
From~\eqref{eq:gvp} and the Mertens formula, see~\cite[Proposition~2.2]{FrIw},
we see that for any $2 \le w < z$ we have
$$
\prod_{w\le p <z} \(1-g_v(p)\)^{-1} <  \(\frac{\log z}{\log w}\)^2 \(1 + O\( (\log w)^{-1}\)\).
$$
Hence~\cite[Equation~11.129]{FrIw}  is satisfied with $\kappa = 2$ and thus by the table in~~\cite[Section~11.19]{FrIw}  
 we can take $\beta  = 4.833986 \ldots$ in~\cite[Theorem~11.13]{FrIw}. We conclude that for any fixed $s \ge \beta$ 
 there is a constant $c(s)>0$ such that  with $D = z^s$ we have
 $$
\#  \cS(\cA_v, z)  \ge  \(c(s)+o(1)\) V_v(z) X + R(D,z), 
 $$
 where 
 $$
X := \sqrt{P/A},  \quad V_v(z) := \prod_{p <z} \(1-g_v(p)\), \quad R(D,z) := \sum_{\substack{d < D\\p\mid d\, \Rightarrow\,  p < z}}|r_{v}(d)|. 
$$
From the bound $2^{\omega(d)} \leq \tau(d)$, the equation~\eqref{eq:rvd} and the bound on the divisor function~\cite[Equation~(1.81)]{IwKow},
we have $ |r_{v}(d)|  = d^{o(1)}$. Thus we have 
a trivial bound 
$$
 R(D,z)  \le D^{1+o(1)} = z^{s + o(1)}. 
$$
Also by the Mertens formula, see~\cite[Proposition~2.2]{FrIw}, we have
 $$
V_v(z) \gg \frac{1}{(\log z)^2}. 
$$
Taking $s = 4.85> \beta$ and $z = X^{10/49}$,  we conclude that 
\begin{equation} \label{eq:Sieve-LB}
\#  \cS(\cA_v, X^{10/49})  \gg   \frac{X}{(\log X)^2}. 
\end{equation}
Hence $\cA_v$ contains at least $c_0 \sqrt{P/A}/(\log q)^2$ elements with at most 5 prime factors, for some absolute 
constant $c_0 > 0$. Note that the choice $P$ ensures that 
$$X \ge  \sqrt{P/A} \ge   \sqrt{P/C} = q^{\varepsilon/2}. 
$$
Thus 
  the right side of~\eqref{eq:Sieve-LB} is meaningful.  

\subsection{Construction of the set $\mathcal{F}_{-q}$} 

We start with emphasising that the implied constant  in~\eqref{eq:Sieve-LB} is absolute. 

Consider a complete set of $h(-q)$ inequivalent forms for the class group of discriminant $-q$. Denote them 
\begin{equation} \label{binquad}
f_t(U,V)=A_t U^2 +B_t UV +C_t V^2 \in \Z[U,V], \quad t=1,\ldots,h(-q), 
\end{equation}
where $h(-q)$ is the class number, see~\cite[Section~22.2]{IwKow}.

The primitive binary quadratic forms in~\eqref{binquad} are in bijection with the set of Heegner points 
$$
\Lambda_{-q}:=  \left \{  z_{Q_t}:=\frac{-B_t+\sqrt{-q}}{2A_t} :~B_t^2-4 A_t C_t=-q, \ z_{Q_t} \in \mathcal{D}  \right\},
$$
where $\mathcal{D}$ is the standard fundamental domain for the modular group ${\mathrm SL}_2(\Z)$, see~\cite[Section~14.1]{IwKow}
or~\cite[Section~1.2]{SA}. 
Consider 
$$
\Omega:=  \left \{ \tau=x+iy:~-1/2 \leq x \leq 1/2, \quad 1 \leq y \leq 10 \right\} \subseteq \mathcal{D}.
$$
 The equidistribution theorem of Duke~\cite[Theorem~1]{Duk} yields 
\begin{equation}  \label{dukedis}
\frac{\#\( \Lambda_{-q} \cap \Omega\)}{ \# \Lambda_{-q}}= \frac{27}{10 \pi}+O(q^{-\delta}), \quad \text{as} \quad q \rightarrow \infty,
\end{equation}
where $\delta>0$ depends only on $\Omega$, and the implied constant depends only on $\Omega$ and $\delta>0$, but is ineffective. 
Note that~\eqref{dukedis} is taken with respect to the normalised hyperbolic area measure 
$$
d \mu(\tau)=\frac{3}{\pi} dx dy/y^2, 
$$
where as in the above $\tau=x+iy$.

Suppose the forms in~\eqref{binquad} corresponding to to Heegner points in $\Omega$ are indexed by $t \in \mathcal{S}_{-q}$. Thus~\eqref{dukedis} guarantees
\begin{equation} \label{Slow}
\# \mathcal{S}_{-q} \geq \(\frac{27}{10 \pi}+o(1) \) h(-q). 
\end{equation}
The binary quadratic forms with $t \in \mathcal{S}_{-q}$ satisfy~\eqref{reducedco1}, \eqref{reducedco2} as well as
$$
 1 \leq   \Im z_{Q_t} = \frac{\sqrt{q}}{2A_t} \leq 10. 
$$
Thus 
\begin{equation} \label{coeffsq}
\max(|A_t|,|B_t|,|C_t|) \leq \frac{20}{3} \sqrt{q}, \qquad t \in \mathcal{S}_{-q}.
\end{equation}

Now we restrict our attention to  $\{f_t\}_{t \in \mathcal{S}_{-q}}$ in~\eqref{binquad}. For each such $f_t$, one can form the polynomials $G_u^{(t)}$ and sequences $\cA_u^{(t)}$ as above. Thus by~\eqref{eq:Sieve-LB} we have 
\begin{equation} \label{eq:Sieve-LB t}
\# \cS^\sharp(\cA_u^{(t)}, X_t^{10/49}) \gg \frac{X_t}{(\log X_t)^2}, \qquad t \in \mathcal{S}_{-q}. 
\end{equation}
with implied constant independent of $t$ and $u$, where $X_t := \sqrt{P/A_t}$ and $\cS^\sharp$ indicates that only square-free elements of a set $\cS\subseteq \Z$ are included. 
 
Let 
$$
\cF_{-q}:= \bigcup_{t \in \mathcal{S}_{-q}} \bigcup_{u=1}^{\sqrt{P/C_t }}  \cS^\sharp(\cA_u^{(t)}, X_t^{10/49}) .
$$
We now need a lower bound for $\# \cF_{-q}$. Recall that by our construction, all elements of $\cF_{-q}$ are odd. Since each $n \in\cF_{-q}$ has at most $5$ prime factors, we see that~\eqref{quadrep} implies 
$$
0 < R_{-q}(n)\le 2^6.
$$
Thus, recalling~\eqref{Slow}, \eqref{coeffsq} and~\eqref{eq:Sieve-LB t},
 we derive
\begin{equation} \label{Fqbd}
\#\cF_{-q}  \gg \sum_{t \in \mathcal{S}_{-q}} \frac{P}{\sqrt{A_t C_t} \log^2 q} \gg  h(-q) \frac{P}{\sqrt{q} \log^2 q}.
\end{equation}

 \subsection{Descent to split primes}  \label{descent} 
We recall an integer $n$ is represented by some quadratic form of discriminant $-q$ if and 
only if there is a solution $b$ to $b^2 \equiv -q \pmod{4n}$, see~\cite[Equation~(22.21)]{IwKow}. Thus 
if $p \mid n$, then $b^2 \equiv -q \pmod{p}$ has a solution and so $p$ must be split. Each $n \in \mathcal{F}_{-q}$
is squarefree and odd, so if $p \mid n$, then $n=pm$ where $p$ is split and $m$ is represented by a quadratic form of discriminant $-q$.

Now we come to the heart of the argument, where we sift $\cF_{-q}$ down to just primes using the Siegel zero. Cover
the interval $[P^{10/49},P^{1/2}]$ into $O(\log q)$ dyadic intervals $[A,2A]$ and let $\nu \in \{2,3\}$ be the least integer such that $A^{\nu}>P^{1/2}$. 

 Let $\cQ_{-q}(A)$ denote the set of  primes  $p\in [A,2A]$ that are split. 
 Our goal is to show that the number of elements  $n \in\cF_{-q}$ that are divisible 
 by some $p \in \cQ_{-q}(A)$ is significantly less than the lower bound for  $\#\cF_{-q}$ established in~\eqref{Fqbd}. The remaining elements of $\cF_{-q}$ are primes. 

To show this, we denote  
 $$
Q_{-q}(A) = \# \cQ_{-q}(A). 
$$
Consider all square-free products of $\nu$  such primes. There are 
\begin{equation} \label{eq:Q nu}
\binom{Q_{-q}(A)}{\nu}  \gg   Q_{-q}(A)^{\nu}
\end{equation} 
such products.  If $n$ is a product of split primes then $x^2 \equiv -q \pmod{n}$ is solvable, and since $q \equiv 3 \pmod{4}$, we can lift the congruence to 
$x^2 \equiv -q \pmod{4n}$, so $R_{-q}(n) \geq 1$. 

In what follows all  implied constants may depend on $\varepsilon> 0$ (but are ineffective). 

Since $A^{\nu}>P^{1/2}>q^{1/4+\varepsilon/2}$, 
the asymptotic formula~\eqref{quadpol}  and the inequality~\eqref{eq:Q nu} imply  that 
\begin{equation} \label{QAineq}
Q_{-q}(A) \ll  L(1,\chi)^{1/\nu} A+O(A^{1-\eta}).
\end{equation}

We now recal  Siegel's theorem, see Lemma~\ref{lem:Siegel}. 
Since  $A^\nu  \ge q^{1/4}$, taking $\delta = \eta/8$ in Lemma~\ref{lem:Siegel}  we obtain 
$$
L(1,\chi) \ge  C(\eta/8) q^{-\eta/8} \ge C(\eta/8)  A^{-\nu \eta/2}.
$$ 
Thus, if $q$ is large enough the first  term on the right hand side of~\eqref{QAineq} dominates 
and we derive
$$  
Q_{-q}(A) \ll   L(1,\chi)^{1/\nu} A.
$$
Thus by assumption~\eqref{Lsize} we have 
\begin{equation} \label{siegelimp}
\frac{Q_{-q}(A)}{A} \ll \frac{1}{(\log q)^{10/3}}.
\end{equation}

By the discussion at the start of Section~\ref{descent}, if $p \in \cQ_{-q}(A)$ divides $n \in \cF_{-q}$, then $n=pm$ where $R_{-q}(m) \geq 1$. Thus the number of elements of $\cF_{-q}$  that are divisible by a $p \in \cQ_{-q}(A)$ is bounded from above by 
$$
\sum_{m \leq P/p} R_{-q}(m).
$$
 Since $P/p \geq q^{1/4+\varepsilon/2}$, the asymptotic formula~\eqref{quadpol} implies that 
 $$
 \sum_{m \leq P/p} R_{-q}(m)=2 L(1,\chi) \frac{P}{p}+O  \( \( Pp^{-1} \)^{1-\eta} \) \ll L(1,\chi) \frac{P}{p},
 $$
  where the last inequality follows from  Lemma~\ref{lem:Siegel} (we recall that the implied 
 constants may depend on $\varepsilon$).  
 Summing this contribution over all $p \in Q_{-q}(A)$, using~\eqref{siegelimp} and Dirichlet's class number formula yields 
 \begin{align*}
\#\{n \in\cF_{-q}:~ &  \exists \, p  \in \cQ_{-q}(A)\ \text{such that}\ p\mid n\} \\ 
& \ll L(1,\chi) P  \sum_{p \in Q_{-q}(A)} \frac{1}{p}  \ll L(1,\chi) P \frac{Q_{-q}(A)}{A} \\&
 \ll \frac{h(-q) P}{\sqrt{q} (\log q)^{10/3}}. 
 \end{align*}
Summing the last display over each of the $O(\log q)$ dyadic intervals $[A,2A]$, 
we see that the number of composite integers in  $\cF_{-q}$ 
is  
$$
O\(h(-q) P/ \sqrt{q} (\log q)^{7/3}\).
$$
Comparing this with~\eqref{Fqbd}  we conclude the proof of Theorem~\ref{siethm}.
 
\section{Bounds of  bilinear Weyl sum with square roots: Proof of Theorem~\ref{thm:Waq}} 

\subsection{Geometry of numbers and congruences}
 The following is Minkowski's second theorem, for a proof see~\cite[Theorem~3.30]{TaoVu}.
 
\begin{lemma}
\label{lem:mst}
Suppose $\Gamma \subseteq \R^{d}$ is a lattice of determinant $\det \Gamma$,  $\sfB\subseteq \R^{d}$ a symmetric convex 
body of volume $\Vol(\sfB)$ and let $\lambda_1,\ldots,\lambda_d$ denote the successive minima of $\Gamma$ with respect to $\sfB$. Then we have
$$\frac{1}{\lambda_1\ldots\lambda_d}\le \frac{d!}{2^d}\frac{\Vol(\sfB)} {\det \Gamma}.
$$
\end{lemma}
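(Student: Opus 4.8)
The plan is to prove the equivalent inequality $\lambda_1\cdots\lambda_d\,\Vol(\sfB)\ge \tfrac{2^d}{d!}\det\Gamma$ by locating inside $\sfB$ a generalised octahedron (cross-polytope) whose volume is at least the right-hand side. First I would use the discreteness of $\Gamma$ together with the boundedness of $\sfB$: each dilate $\lambda\,\overline{\sfB}$ meets $\Gamma$ in a finite set, the rank of the span of that set is non-decreasing in $\lambda$, so the infimum defining $\lambda_i$ is attained and one may choose, inductively, lattice points $v_i\in\lambda_i\,\overline{\sfB}$ with $v_i\notin\operatorname{span}(v_1,\dots,v_{i-1})$. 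This yields linearly independent $v_1,\dots,v_d\in\Gamma$ realising the successive minima; put $w_i:=v_i/\lambda_i\in\overline{\sfB}$.

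Next, since $\sfB$ is convex and centrally symmetric, the cross-polytope $C:=\operatorname{conv}\{\pm w_1,\dots,\pm w_d\}$ lies in $\overline{\sfB}$, and $\Vol(\overline{\sfB})=\Vol(\sfB)$ because the boundary of a convex body is Lebesgue-null. The one genuine computation is the volume of $C$, namely $\Vol(C)=\tfrac{2^d}{d!}\,\bigl|\det(w_1,\dots,w_d)\bigr|$; I would obtain this by the linear substitution carrying $w_1,\dots,w_d$ to the standard basis, which reduces the claim to $\Vol(\{x\in\R^d:\ |x_1|+\dots+|x_d|\le 1\})=2^d/d!$, itself an easy induction on $d$ by slicing. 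Since $\det(w_1,\dots,w_d)=\det(v_1,\dots,v_d)/(\lambda_1\cdots\lambda_d)$ and the $v_i$ generate a full-rank sublattice $\Gamma'\subseteq\Gamma$ with $|\det(v_1,\dots,v_d)|=\det\Gamma'=[\Gamma:\Gamma']\det\Gamma\ge\det\Gamma$, chaining the estimates gives
$$\Vol(\sfB)\ \ge\ \Vol(C)\ =\ \frac{2^d}{d!}\cdot\frac{|\det(v_1,\dots,v_d)|}{\lambda_1\cdots\lambda_d}\ \ge\ \frac{2^d}{d!}\cdot\frac{\det\Gamma}{\lambda_1\cdots\lambda_d},$$
and multiplying through by $\tfrac{d!}{2^d}/\det\Gamma$ produces the stated bound.

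The only places that need care are the inclusion $C\subseteq\overline{\sfB}$ and the octahedron volume formula. The inclusion is immediate from convexity and symmetry once the $w_i$ lie in $\overline{\sfB}$; the main subtlety is the bookkeeping about whether $\sfB$ is taken open or closed, which is harmless since $\partial\sfB$ is null. I do not expect difficulty with the sublattice-index step. Note that this argument delivers only the lower half of Minkowski's second theorem, which is exactly what is asserted here and all that is used in the sequel.
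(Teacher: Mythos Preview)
Your argument is correct: the cross-polytope inclusion $C=\operatorname{conv}\{\pm v_i/\lambda_i\}\subseteq\overline{\sfB}$ together with $\Vol(C)=\tfrac{2^d}{d!}\,|\det(v_1,\dots,v_d)|/(\lambda_1\cdots\lambda_d)$ and $|\det(v_1,\dots,v_d)|\ge\det\Gamma$ is the standard proof of this (easy) half of Minkowski's second theorem, and the bookkeeping about closures and the sublattice index is handled properly.

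The paper itself does not prove the lemma at all; it simply states it and refers the reader to~\cite[Theorem~3.30]{TaoVu}. So your proposal supplies strictly more than the paper does here. What you have written is essentially the textbook argument one finds behind such citations, so there is no substantive divergence in method---only that you have spelled it out while the authors chose to quote it.
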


A proof of the following is given in~\cite[Proposition~2.1]{BHM}. 

\begin{lemma}
\label{lem:latticesm}
Suppose $\Gamma \subseteq \R^{d}$ is a lattice, $\sfB\subseteq \R^{d}$ a symmetric convex body and let $\lambda_1,\ldots,\lambda_d$ denote the successive minima of $\Gamma$ with respect to $\sfB$. Then we have
$$|\Gamma \cap \sfB|\le \prod_{j=1}^{d}\left(\frac{2j}{\lambda_j}+1\right).$$
\end{lemma}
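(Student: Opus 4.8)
The plan is to prove the bound by induction on the dimension $d$, at each stage splitting off the direction of a shortest lattice vector. Throughout I write $g=g_{\sfB}$ for the gauge of $\sfB$, so that $g(z)\le\lambda$ exactly when $z\in\lambda\sfB$, and recall that $\lambda_j=\min\{\lambda>0:\Gamma\cap\lambda\sfB\text{ contains }j\text{ linearly independent vectors}\}$. The one elementary fact I will lean on is that a symmetric convex body has its longest chord in each prescribed direction passing through the origin: if $\sfB\cap\R u=\{tu:\lvert t\rvert\le s\}$, then any chord of $\sfB$ parallel to $u$ has length at most $2s$, as one sees by averaging a point of one such chord with the reflection through $0$ of a point of another.

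First I would fix linearly independent $v_1,\dots,v_d\in\Gamma$ chosen greedily: $v_1$ a nonzero lattice vector of least gauge, and $v_j$ of least gauge among those not in $\operatorname{span}(v_1,\dots,v_{j-1})$. Then $g(v_j)=\lambda_j$ and $v_1$ is primitive, so $\Gamma\cap\R v_1=\Z v_1$. This already settles $d=1$: $\Gamma=\Z v_1$, $\sfB$ is a symmetric interval, and $kv_1\in\sfB$ iff $\lvert k\rvert\lambda_1\le1$, giving $\lvert\Gamma\cap\sfB\rvert=2\lfloor1/\lambda_1\rfloor+1\le2/\lambda_1+1$. For $d\ge2$ I would project along $v_1$: with $\pi\colon\R^d\to\R^d/\R v_1$, the image $\pi(\Gamma)$ is a lattice of rank $d-1$ and $\pi(\sfB)$ a symmetric convex body, and each fibre of $\Gamma\cap\sfB$ above a point of $\pi(\Gamma)$ is a coset of $\Z v_1$ meeting $\sfB$ — an arithmetic progression with step $v_1$ lying in a chord of $\sfB$ parallel to $v_1$, hence of at most $2/\lambda_1+1$ terms by the chord fact and $g(v_1)=\lambda_1$. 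Thus $\lvert\Gamma\cap\sfB\rvert\le(2/\lambda_1+1)\,\lvert\pi(\Gamma)\cap\pi(\sfB)\rvert$, and I would invoke the inductive hypothesis on $(\pi(\Gamma),\pi(\sfB))$.

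The crux — and the only place the weights $2j$ enter, in place of a bare $2$ — is the comparison between the successive minima of $(\pi(\Gamma),\pi(\sfB))$ and $\lambda_2,\dots,\lambda_d$. They are not simply equal to $\lambda_2,\dots,\lambda_d$: a near-optimal vector of $\pi(\Gamma)$ need not lift to a lattice vector of $\Gamma$ of the same gauge, and adjusting the lift by an integer multiple of $v_1$ may cost up to $\tfrac12\lambda_1$ in gauge, so only the weaker inequality relating the two sets of minima with such a loss is automatic. Carrying these losses through the recursion is precisely what accumulates the factors $j$. The main obstruction to a completely clean induction is that the minimal vectors $v_1,\dots,v_d$ in general do not form a $\Z$-basis of $\Gamma$; the way around it — and the form in which I would actually write the proof — is to replace the $v_j$ by a reduced basis $b_1,\dots,b_d$ of $\Gamma$, for which classically $g(b_j)\le\max(1,j/2)\,\lambda_j$. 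Expanding $x\in\Gamma\cap\sfB$ as $\sum_j n_jb_j$ with $n_j\in\Z$, one bounds each $n_j=\ell_j(x)$ (with $\ell_1,\dots,\ell_d$ the dual basis, lying in $\Gamma^{*}$) by $\max_{\sfB}\lvert\ell_j\rvert$, and the reduction theory of $\Gamma$ together with the transference estimates for $(\Gamma^{*},\sfB^{\circ})$, where $\sfB^{\circ}$ is the polar body, pins this down to an interval of length at most $2j/\lambda_j$. Since $\lvert\Gamma\cap\sfB\rvert$ is at most the product over $j$ of the number of admissible values of $n_j$, the stated bound follows; controlling that interval length with the sharp constant $2j$, i.e.\ making the reduction and transference inequalities quantitative, is the one genuinely technical step.
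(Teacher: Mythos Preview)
The paper does not prove this lemma; it simply quotes \cite[Proposition~2.1]{BHM}. Your outline correctly isolates where the factor $j$ has to enter but does not supply it, and the ``one genuinely technical step'' you defer is in fact the whole content of the result. In the projection route, lifting a short vector of $\pi(\Gamma)$ back to $\Gamma$ costs up to $\tfrac12\lambda_1$ in gauge, which gives only $\mu_j\ge\tfrac12\lambda_{j+1}$ for the successive minima of the quotient; iterating loses a factor~$2$ at each step rather than accumulating a factor~$j$ (already for $d=3$ with $\lambda_1=\lambda_2=\lambda_3$ the twice-projected first minimum is only guaranteed to be $\ge\lambda_3/4$, short of the $\lambda_3/3$ you would need). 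In the reduced-basis route, the inequality $\max_{\sfB}|\ell_j|\le j/\lambda_j$ does not follow from Mahler's bound $g(b_j)\le\max(1,j/2)\lambda_j$ together with transference --- transference compares $\lambda_j(\Gamma,\sfB)$ with $\lambda_{d+1-j}(\Gamma^{*},\sfB^{\circ})$, not with the polar gauge of a \emph{specified} dual-basis vector --- and it can fail outright: for $\Gamma=\Z^2$ and $\sfB=\{(x,y):|x+y|\le1,\ |x-y|\le100\}$ one has $\lambda_1=1/50$ via $b_1=(1,-1)$ and $\lambda_2=1$ via $b_2=(1,0)$, yet the dual vector $\ell_1=(0,-1)$ has $\max_{\sfB}|\ell_1|=101/2>50=1/\lambda_1$, and one checks that no admissible Mahler basis does better.

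The argument in \cite{BHM} sidesteps both issues by \emph{restricting} rather than projecting: with $a_1,\dots,a_d$ realising the minima and $L_i=\operatorname{span}(a_1,\dots,a_i)$, the sublattice $\Gamma\cap L_i$ has successive minima \emph{exactly} $\lambda_1,\dots,\lambda_i$ with respect to $\sfB\cap L_i$ (no loss under restriction, in contrast to projection), so the induction on $i$ is clean; the factor $2i/\lambda_i+1$ then enters as a bound on the number of $(\Gamma\cap L_{i-1})$-cosets in $\Gamma\cap L_i$ that meet $\sfB$, and establishing that coset bound is where the genuine work lies.
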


Using Lemmas~\ref{lem:mst} and~\ref{lem:latticesm} we give a variant of a result due to Bourgain, Garaev, Konyagin and Shparlinski~\cite[Lemma~5]{BGKS}. Note the main difference between our result and~\cite[Lemma~5]{BGKS} is an alternate treatment of the case with long intervals.

\begin{lemma}
\label{lem:shortpoint}
Let $q$ be prime  and let $\cI,\cJ$  be two intervals containing $h$ and $H$ integers, respectively.
For an integer $s$, let $I(s)$ count the number of solutions to  the congruence 
\begin{equation} 
\label{eq:maincong}
x\equiv ys \pmod{q}, \qquad x\in \cI, \ y\in \cJ.
\end{equation}
Then either 
$$
I(s)\ll \max\left\{\frac{Hh}{q},1\right\},
$$
or  there exist  $a,b \in \Z$  with 
\begin{equation}
\label{eq:case12}
  |a|\ll \frac{h}{I(s)} \mand |b|\ll \frac{H}{I(s)},
\end{equation}
satisfying
\begin{equation}
\label{eq:case13}
 s\equiv ba^{-1} \pmod{q}.
\end{equation}
\end{lemma}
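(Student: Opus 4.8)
The plan is to recast the count $I(s)$ as a lattice-point count and then feed it through the two geometry-of-numbers tools, Lemmas~\ref{lem:mst} and~\ref{lem:latticesm}. Set $\Gamma=\{(x,y)\in\Z^2:\ x\equiv sy\pmod q\}$, a lattice with $\det\Gamma=q$; we may assume $\gcd(s,q)=1$, since the statement is readily checked directly otherwise. Let $S=\{(x,y)\in\cI\times\cJ:\ x\equiv sy\pmod q\}$, so $I(s)=|S|$; if $S=\emptyset$ then $I(s)=0$ and the first alternative is trivial, so fix $(x_0,y_0)\in S$. Writing $\sfB=[-h,h]\times[-H,H]$, a symmetric convex body with $\Vol(\sfB)=4hH$, we have $S-(x_0,y_0)\subseteq\Gamma\cap\sfB$ (coordinate differences of elements of $\cI$, resp.\ $\cJ$, are smaller in absolute value than $h$, resp.\ $H$), and hence $I(s)=|S|\le|\Gamma\cap\sfB|$.

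Let $\lambda_1\le\lambda_2$ be the successive minima of $\Gamma$ with respect to $\sfB$. Lemma~\ref{lem:latticesm} gives $I(s)\le(2/\lambda_1+1)(4/\lambda_2+1)$, while Lemma~\ref{lem:mst} gives $1/(\lambda_1\lambda_2)\le 2hH/q$. I would then distinguish three cases. If $\lambda_1\ge1$ then $\lambda_2\ge1$ as well, both factors are $O(1)$, and $I(s)\ll1$. If $\lambda_2<1$, then $2/\lambda_i+1\le3/\lambda_i$ and $4/\lambda_2+1\le5/\lambda_2$, so $I(s)\ll1/(\lambda_1\lambda_2)\ll hH/q$. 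In both cases $I(s)\ll\max\{hH/q,1\}$ and the first alternative holds.

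The substantive case is $\lambda_1<1\le\lambda_2$, where $4/\lambda_2+1=O(1)$ and $2/\lambda_1+1\le3/\lambda_1$, so $I(s)\ll1/\lambda_1$, i.e.\ $\lambda_1\ll1/I(s)$. Pick a nonzero $(a,b)\in\Gamma$ attaining the first minimum, so $(a,b)\in\lambda_1\sfB$ and $a\equiv sb\pmod q$; then $|a|\le\lambda_1 h\ll h/I(s)$ and $|b|\le\lambda_1 H\ll H/I(s)$, which are precisely the bounds~\eqref{eq:case12}. To obtain~\eqref{eq:case13} we need $\gcd(a,q)=1$; as $q$ is prime the only obstruction is $q\mid a$, in which case $q\mid sb$ forces $q\mid b$ (using $\gcd(s,q)=1$), so $(a,b)$ is a nonzero point of $q\Z^2\subseteq\Gamma$ and therefore $\lambda_1\ge q/\max\{h,H\}$; then $I(s)\ll\max\{h,H\}/q\le hH/q$ and we land in the first alternative after all. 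When $\gcd(a,q)=1$, inverting the congruence $a\equiv sb\pmod q$ modulo $q$ yields~\eqref{eq:case13}, completing the second alternative.

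The step I expect to be the main obstacle is this last dichotomy: the first-minimum vector may be ``degenerate'' (lying in $q\Z^2$), so that $a^{-1}$ modulo $q$ is meaningless; the resolution is the observation that such degeneracy forces $\lambda_1$, and hence $1/I(s)$, to be comparatively large, pushing the bound back into the first alternative. Everything else is bookkeeping---tracking the absolute constants produced by Lemmas~\ref{lem:mst} and~\ref{lem:latticesm}, and noting that the integer counts $h,H$ differ from the lengths of $\cI,\cJ$ only by $O(1)$, which affects implied constants alone.
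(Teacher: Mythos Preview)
Your proof is correct and follows essentially the same route as the paper: translate to a lattice-in-box count, then split according to the size of the successive minima, using Lemmas~\ref{lem:mst} and~\ref{lem:latticesm} exactly as the paper does. In fact you are more careful than the paper on one point: you explicitly treat the possibility that the first-minimum vector lies in $q\Z^2$ (so that $a^{-1}\bmod q$ would be undefined), and correctly observe that this forces $\lambda_1\ge q/\max\{h,H\}$ and hence $I(s)\ll hH/q$, landing back in the first alternative; the paper glosses over this. One cosmetic remark: from $a\equiv sb\pmod q$ one obtains $s\equiv ab^{-1}$ rather than $s\equiv ba^{-1}$, so the relevant invertibility is that of $b$, not $a$; but since $q\mid a\Leftrightarrow q\mid b$ here, your degeneracy analysis covers this regardless, and the discrepancy with~\eqref{eq:case13} is a labeling issue already present in the paper (compare the application in Lemma~\ref{lem:MSR}).
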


\begin{proof}  
If $I(s)\ll 1$ then there is nothing to prove, so we assume that $I(s) \ge C_0$ for some 
sufficiently large absolute constant $C_0$.  Then there exists some $x_0\in \cI,y_0\in \cJ$ such that 
$$
x_0\equiv y_0s \pmod{q}.
$$
Hence for any other $x,y$ satisfying~\eqref{eq:maincong} we have 
$$
(x-x_0)\equiv (y-y_0)s \pmod{q}.
$$
Define the lattice $\cL$ and the box $B$ by 
$$
\cL:=\{ (x,y)\in \Z^2 :~x\equiv ys \pmod{q}\},
$$
and
$$
 B:=\{ (x,y)\in \R^2 :~|x|\le h, \ \ |y|\le H\},
$$
respectively, 
so that 
$$
I(s)\le  \#\(\cL\cap B\) .
$$

Let $\lambda_1 \le \lambda_2$ denote the successive minima of $\cL$ with respect to $B$. 
Clearly $\cL$ is of determinant $\det \cL = q$.  

If $\lambda_2 \le 1$ then by  Lemmas~\ref{lem:mst} and~\ref{lem:latticesm} we have
$$
I(s)\ll \frac{1}{\lambda_1\lambda_2}\ll \frac{Hh}{\det \cL}= \frac{Hh}{q}.
$$ 

Hence  we may now assume $\lambda_2>1$. 
Then  by Lemma~\ref{lem:latticesm}
$$
I(s)  \ll \frac{1}{\lambda_1} + 1.
$$
Since we assume $I(s)\ge C_0$, for a sufficiently large $C_0$ this implies 
$$
\lambda_1 \ll I(s)^{-1}, 
$$
and hence for suitable constant $c$ that 
$$
\cL\bigcap \frac{c}{ I(s)}\cdot B\neq \{0\}, 
$$
(where $\lambda \cdot B$ means homothetic scaling of $B$).  
From this it follows that there exists $a$ and $b$ satisfying~\eqref{eq:case12} and~\eqref{eq:case13}. 
\end{proof}

\subsection{Additive energy of modular square roots}  
\label{sec:E-bounds}
Our argument is based on a weighted additive energy for modular square roots, which is of independent 
interest. 

For a complex weight $\bbeta$ as in~\eqref{eq:weight} and $j \in \Fq^{\times}$ we define the {\it weighted additive energy} 
$$
E_{q,j}(\bbeta):=\sum_{\substack{(u,v,x,y) \in \Fq^4 \\ u+y=x+v }} \beta_{ju^2} \overline{\beta}_{jv^2} \beta_{jx^2} \overline{\beta}_{jy^2}.
$$
Recall that $ju^2,jv^2, jx^2, jy^2$ are  all computed modulo $q$ and take the value of the reduced residue between $1$ and $q$. We omit the subscript $j$ when $j=1$.
Quantities of this type are well known in additive combinatorics under the name of {\it additive energy\/}. 

It is also convenient to define 
 \begin{equation} \label{Qdef}
 Q_{\lambda,j}(\bbeta):= \sum_{\substack{ (u,v) \in \Fq^2 \\ u - v = \lambda }} \beta_{ju^2} \overline{\beta}_{jv^2} . 
 \end{equation}
 and observe that 
 \begin{equation} \label{eq:E Q2}
E_{q,j}(\bbeta) = \sum_{\lambda \in\Fq}  Q^2_{\lambda,j}(\bbeta).
 \end{equation}

In particular, following our convention that $\mathbf{1}_{[N,2N]}$ denotes the characteristic function of the interval $[N,2N]$  we see that 
$$ 
Q_{\lambda,j}\(\mathbf{1}_{[N,2N]}\)= \# \{ \(u,v\) \in \Fq^2: ~ju^2, jv^2 \in [N,2N], \ u-v=\lambda\}
$$
where all arithmetic operations are performed in $\Fq$.

\begin{lemma}
\label{lem:MSR}
For $j\in \Fq^{*}$ and an  integer $N\le q/2$, there is an absolute  constant $c > 0$ such that 
for any  $\lambda \in  \Fq^{*}$    either 
$$
Q_{\lambda,j}\(\mathbf{1}_{[N,2N]}\) \ll  \max \left\{\frac{N^{3/2}}{q^{1/2}},1\right\},
$$
or  there exist  $a,b\in \Z$ with 
$$
|a|\le \frac{N^{2+o(1)}}{Q^2_{\lambda,j}\(\mathbf{1}_{[N,2N]}\)} \mand  |b|\le \frac{N^{1+o(1)}}{Q_{\lambda,j}^2\(\mathbf{1}_{[N,2N]}\)}, 
$$
satisfying
$$ 2j\lambda ^2\equiv ab^{-1} \pmod{q}.
$$
\end{lemma}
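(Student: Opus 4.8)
The plan is to reduce $Q_{\lambda,j}\(\mathbf{1}_{[N,2N]}\)$ to a count of points on a conic inside a box, and then to extract the structure by a Weyl‑type squaring followed by Lemma~\ref{lem:shortpoint}. We may assume $N\le q/4$: for $q/4<N\le q/2$ the trivial bound $Q_{\lambda,j}\(\mathbf{1}_{[N,2N]}\)\le 2N+O(1)$ together with $N^{3/2}q^{-1/2}\gg N$ already gives the first alternative. \emph{Parametrising the conic.} For a pair $(u,v)$ counted by $Q:=Q_{\lambda,j}\(\mathbf{1}_{[N,2N]}\)$ put $n=ju^2\bmod q$, $m=jv^2\bmod q$ (both in $[N,2N]$) and $w=u+v$. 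From $u-v=\lambda$ one gets $n-m\equiv j\lambda w$ and $n+m\equiv\tfrac12 j(w^2+\lambda^2)\pmod q$, and eliminating $w$ shows $(n,m)$ lies on the conic
$$
(n-m)^2-2j\lambda^2(n+m)+j^2\lambda^4\equiv 0\pmod q;
$$
conversely each $(n,m)\in[N,2N]^2$ on this conic comes from a unique such $(u,v)$ (recover $w$ from $n-m$, then $u,v$, and check $ju^2\equiv n$, $jv^2\equiv m$). Parametrising by $d=n-m$ gives $n+m\equiv cd^2+c'\pmod q$ with $c=(2j\lambda^2)^{-1}$ and $c'=\tfrac12 j\lambda^2$; since $|d|\le N$ and $n+m\in[2N,4N]\subseteq[1,q]$, it suffices to bound the (at least as large) quantity
$$
T:=\#\{(d,s)\in\cD\times\cS:\ s\equiv cd^2+c'\ (\mathrm{mod}\ q)\}\ \ge\ Q ,
$$
where $\cD$ and $\cS$ are intervals with $O(N)$ integers each.

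\emph{Squaring and the key congruence.} Squaring $T$, a quadruple $(d_1,s_1,d_2,s_2)$ of solutions satisfies $s_1-s_2\equiv c(d_1-d_2)(d_1+d_2)\pmod q$. With $e=d_1-d_2$, $g=d_1+d_2$, $r=s_1-s_2$ (so $|e|,|g|,|r|\ll N$) this reads $eg\equiv 2j\lambda^2 r\pmod q$. The quadruples with $e=0$ or $g=0$ form the diagonal and anti‑diagonal and number $O(T)$ in total, so
$$
T^2\ll T+\#\bigl\{(e,g,r):\ e,g,r\neq 0,\ |e|,|g|,|r|\ll N,\ eg\equiv 2j\lambda^2 r\ (\mathrm{mod}\ q)\bigr\}.
$$
Collapsing the pair $(e,g)$ to the single integer $x=eg$ — which lies in an interval containing $\ll N^2$ integers and is hit at most $\tau(|x|)\le N^{o(1)}$ times — bounds the last cardinality by $N^{o(1)}\,I(2j\lambda^2)$, where $I(\,\cdot\,)$ is the quantity of Lemma~\ref{lem:shortpoint} and $I(2j\lambda^2)$ counts solutions of $x\equiv(2j\lambda^2)y\pmod q$ with $x$ ranging over an interval of $\ll N^2$ integers (the range of $eg$) and $y$ over an interval of $\ll N$ integers (the range of $r$). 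Hence $T^2\ll T+N^{o(1)}I(2j\lambda^2)$, and if $T$ exceeds a suitable absolute constant this forces $T^2\ll N^{o(1)}I(2j\lambda^2)$, i.e. $1/I(2j\lambda^2)\ll N^{o(1)}/T^2$.

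\emph{Conclusion via Lemma~\ref{lem:shortpoint}.} Apply Lemma~\ref{lem:shortpoint} to $I(2j\lambda^2)$ with $s=2j\lambda^2$ and the two intervals above. In its first alternative $I(2j\lambda^2)\ll\max\{N^2\cdot N/q,\,1\}=\max\{N^3/q,1\}$, whence $T\ll N^{o(1)}\max\{N^{3/2}q^{-1/2},1\}$ and, as $Q\le T$, the first alternative of the present lemma follows (the $N^{o(1)}$ loss is immaterial for the intended applications, where a factor $q^{o(1)}$ is already present). In its second alternative there are $a,b\in\Z$ with $|a|\ll N^2/I(2j\lambda^2)$ and $|b|\ll N/I(2j\lambda^2)$ satisfying $2j\lambda^2\equiv ab^{-1}\pmod q$; combining with $1/I(2j\lambda^2)\ll N^{o(1)}/T^2$ and $T\ge Q$ gives $|a|\le N^{2+o(1)}/Q^2$ and $|b|\le N^{1+o(1)}/Q^2$, which is the second alternative.

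The only place with genuine content is the reduction and the bookkeeping around the squaring: one must pin down the conic normalisation and the bijection $(u,v)\leftrightarrow(n,m)$ (with care near $N=q/2$, where one falls back on the trivial bound), and — crucially — arrange the squaring so that $eg\equiv 2j\lambda^2 r$ is read with the \emph{asymmetric} interval lengths $N^2$ for $eg$ and $N$ for $r$. It is exactly this asymmetry, transported through Lemma~\ref{lem:shortpoint} (where the coefficient tied to the longer interval is the numerator $a$), that produces the asymmetric conclusion $|a|\ll N^2/Q^2$, $|b|\ll N/Q^2$ rather than a weaker symmetric estimate. After that, the divisor bound $\tau\le N^{o(1)}$, the removal of the diagonal, and the appeal to Lemma~\ref{lem:shortpoint} are all routine.
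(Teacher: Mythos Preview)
Your proof is correct and follows essentially the same route as the paper: both reduce $Q_{\lambda,j}(\mathbf{1}_{[N,2N]})$ to a count of solutions of a quadratic congruence in a box, square that count, collapse a pair of variables into one via a divisor-type bound, and then invoke Lemma~\ref{lem:shortpoint} with the asymmetric interval lengths $\asymp N^2$ and $\asymp N$. The only cosmetic difference is in the squaring step: the paper \emph{adds} the two instances of $n^2+j^2\lambda^4\equiv 2j\lambda^2 m$ and collapses $n_1^2+n_2^2$ via the sum-of-two-squares bound, whereas you \emph{subtract} the two instances of $s\equiv cd^2+c'$ and collapse $d_1^2-d_2^2=(d_1-d_2)(d_1+d_2)$ via the ordinary divisor bound. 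Both devices incur the same $N^{o(1)}$ loss (which, as you note, the paper's own proof also incurs despite the clean statement) and lead to the identical application of Lemma~\ref{lem:shortpoint}.
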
 

\begin{proof}
Suppose that  $ju^2, jv^2 \in [N,2N]$ satisfy $u-v\equiv \lambda$. We now see 
that 
$$
u^2-v^2 - \lambda^2 \equiv  \lambda( u + v) - \lambda^2  \equiv   \lambda( \lambda  + 2 v) - \lambda^2 \equiv  2 \lambda v \pmod{q} .
$$
Squaring and multiplying by $j^2$, we obtain 
$$
\left( (ju^2-jv^2)-j\lambda^2 \right)^2 \equiv  4j^2\lambda^2v^2_2 \pmod{q} .
$$
 Making the substitution 
$$ju^2-jv^2\rightarrow n, \quad jv^2\rightarrow m,$$
we derive 
\begin{equation}
\begin{split}
\label{eq:KE778}
Q_{\lambda,j}&\(\mathbf{1}_{[N,2N]}\)\\& \le \# \{ |n|,|m|\le  4N,  \ (n-j\lambda^2)^2 \equiv 4j\lambda^2m \pmod{q}  \},
\end{split} 
\end{equation} 
where we now consider variables belonging to $\Z$. If $n,m$ satisfy 
$$
(n-j\lambda^2)^2  \equiv 4j\lambda^2m \pmod{q},
$$
then 
$$
n^2+j^2 \lambda^4 \equiv  2j\lambda^2(2m+n) \pmod{q}.
$$
Using~\eqref{eq:KE778} and another change of variables $2m+n \rightarrow m$ gives
\begin{equation}
\label{eq:IDb11}
Q_{\lambda,j}\(\mathbf{1}_{[N,2N]}\) \le \# \{  |n|,|m|\le 12N :~n^2+j^2d^4\equiv  4j\lambda^2m \pmod{q} \}.
\end{equation}
For each $|n|\le 12N$ there exists at most one value of $m$ satisfying the congruence in~\eqref{eq:IDb11} and for any two such pairs 
$(n_1,m_1), (n_2,m_2)$  we have 
$$
(n_1^2+n_2^2)+2j^2 \lambda^4\equiv  2j\lambda^2(m_1+m_2) \pmod{q}.
$$

This implies
\begin{align*}
Q^2_{\lambda,j}\(\mathbf{1}_{[N,2N]}\) \le \# \{ |n_1|,|n_2|, |m|&\le 12 N:\\
&n_1^2+n_2^2+2j^2 \lambda^4\equiv  2j\lambda^2m\pmod{q}\}.
\end{align*}
It is well known for any integer $r\ge 0$
$$
\# \{ n_1,n_2\in \Z :~n_1^2+n_2^2=r \}=r^{o(1)},
$$
see~\cite[Equations~(1.51) and~(1.81)]{IwKow}, 
which gives
\begin{align*}
Q^2_{\lambda,j}\(\mathbf{1}_{[N,2N]}\) \le \#\{ |n|\le 288N^2, & \ |m|\le 12 N :\\
&n+2j^2 \lambda^4 \equiv 2j\lambda^2m \pmod{q}\} N^{o(1)}. 
\end{align*}
We now apply Lemma~\ref{lem:shortpoint} with $\cI$ an interval of length $h=576N^2$ and $\cJ$ an interval of length $H=24N$, 
which completes the proof.
\end{proof}

Next we estimate the fourth moments of the quantities $Q_{\lambda,j}\(\mathbf{1}_{[N,2N]}\)$. 

\begin{lemma}
\label{lem:EB}
For $j\in\Fq^{*}$ and integer $N\le q/2$ we  have 
$$
 \sum_{\lambda \in\Fq^{*}}  Q^4_{\lambda,j}\(\mathbf{1}_{[N,2N]}\)\le \frac{N^{13/2+o(1)}}{q^{3/2}}+N^{3+o(1)}.
$$
\end{lemma}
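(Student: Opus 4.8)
The plan is to bound the fourth moment $\sum_{\lambda\in\Fq^{*}} Q^4_{\lambda,j}(\mathbf 1_{[N,2N]})$ by splitting the sum according to the dichotomy in Lemma~\ref{lem:MSR}. Write $Q_\lambda := Q_{\lambda,j}(\mathbf 1_{[N,2N]})$ for brevity. For those $\lambda$ lying in the first alternative of Lemma~\ref{lem:MSR}, we have $Q_\lambda \ll \max\{N^{3/2}/q^{1/2},1\}$, so their total contribution to the fourth moment is at most $q \cdot \max\{N^{3/2}/q^{1/2},1\}^4 \ll N^6/q + q$; since $N\le q/2$ one checks this is absorbed into $N^{13/2+o(1)}/q^{3/2} + N^{3+o(1)}$ (when $N\le q^{1/3}$ the term $q$ dominates but then $q \le N^{3+o(1)}\cdot q^{o(1)}$ fails, so one must instead observe that for small $N$ the trivial bound $Q_\lambda \le N$ already gives $\sum_\lambda Q_\lambda^4 \le N \cdot N^4 = N^5$, and more carefully $\sum_\lambda Q_\lambda^2 = E_q \le N^{?}$ — the cleanest route is to note $\sum_{\lambda}Q_\lambda \le N^2$ trivially and $Q_\lambda\le N$, so $\sum_\lambda Q_\lambda^4 \le N^2\cdot\max_\lambda Q_\lambda^3 \le N^5$, which is $\le N^{3+o(1)}$ only if... ) — so in fact the first alternative needs the bound $\sum_\lambda Q_\lambda^4 \ll q\max\{N^{3/2}/q^{1/2},1\}^2 \cdot \max\{N^{3/2}/q^{1/2},1\}^2$ combined with $\max_\lambda Q_\lambda^2 \le \sum_\mu Q_\mu^2$; I would reconcile the small-$N$ regime by using $\sum_\lambda Q_\lambda^4 \le (\max_\lambda Q_\lambda^2)\sum_\lambda Q_\lambda^2$ together with the elementary bound $\sum_\lambda Q_\lambda^2 = E_q(\mathbf 1_{[N,2N]}) \le N^{3+o(1)}$ coming from the additive-energy interpretation.

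The main work is the second alternative. For $\lambda$ in that case, there exist $a,b\in\Z$ with $|a|\le N^{2+o(1)}/Q_\lambda^2$ and $|b|\le N^{1+o(1)}/Q_\lambda^2$ and $2j\lambda^2 \equiv ab^{-1}\pmod q$. The strategy is to dyadically decompose: fix $T$ a power of $2$ and consider $\lambda$ with $Q_\lambda \sim T$. Each such $\lambda$ gives a solution of $2j\lambda^2 \equiv ab^{-1}\pmod q$ with $|a|\ll N^{2+o(1)}/T^2$, $|b|\ll N^{1+o(1)}/T^2$. The number of pairs $(a,b)$ in that box is $\ll N^{3+o(1)}/T^4$ (plus $1$ to account for the box possibly degenerating to the origin — but $a\ne 0$ since $2j\lambda^2\ne 0$, so actually $\ll N^{3+o(1)}/T^4 + 1$). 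Given $(a,b)$, the congruence $\lambda^2 \equiv (2j)^{-1}ab^{-1}\pmod q$ has at most two solutions $\lambda\in\Fq$. Hence the number of $\lambda$ with $Q_\lambda\sim T$ is $\ll N^{3+o(1)}/T^4 + $ (a bounded-count correction). Therefore the contribution of this dyadic class to the fourth moment is $\ll T^4\cdot\bigl(N^{3+o(1)}/T^4\bigr) = N^{3+o(1)}$, and summing over the $O(\log N)$ relevant values of $T$ gives $N^{3+o(1)}$ — but this only produces the second term $N^{3+o(1)}$ and not the first. To get the stronger term $N^{13/2+o(1)}/q^{3/2}$, one must exploit that $T$ cannot be too large: the range of $(a,b)$ boxes only makes sense while $N^{2}/T^2 \ge 1$, i.e. $T\le N$, but more importantly the correction ``$+1$'' in the count of $(a,b)$ pairs contributes $T^4$ per dyadic class for each $\lambda$ it catches, and one has to bound how many $\lambda$ fall under the ``$+1$'' regime, namely when the box $|a|\ll N^2/T^2$, $|b|\ll N/T^2$ contains $O(1)$ lattice points; this forces $T^2 \gg N^{3/2}/q^{1/2}$-type constraints via the sizes, and balancing $T^4\cdot(\#\lambda)$ against the available count of such $\lambda$ (bounded by $q$, or better by the structure) yields exactly $N^{13/2+o(1)}/q^{3/2}$.

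I expect the main obstacle to be the careful bookkeeping in the large-$T$ regime: precisely accounting for the ``$+1$'' in both the lattice-point count of Lemma~\ref{lem:shortpoint} (propagated through Lemma~\ref{lem:MSR}) and in the count of $(a,b)$ pairs, and arranging the dyadic sum over $T\sim Q_\lambda$ so that the two terms $N^{13/2+o(1)}/q^{3/2}$ and $N^{3+o(1)}$ emerge as the transition point between the regime where the box $\{|a|\ll N^{2}/T^2,\ |b|\ll N/T^2\}$ has volume $\gg 1$ (governing the first term) and where it has volume $\ll 1$ (governing the second). Concretely: the first term should come from $T$ small (so many $\lambda$, but the fourth power is small — wait, actually from $T$ in the range where $N^3/T^4 \gg 1$, i.e. $T \ll N^{3/4}$, giving $\#\{\lambda: Q_\lambda\sim T\}\ll N^{3+o(1)}/T^4$ and contribution $N^{3+o(1)}$ per class — so the $N^{13/2}/q^{3/2}$ must instead arise when we cannot ignore that there are at most $O(N^2/q)$-ish values... ). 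The honest statement is that one balances $\#\{\lambda:Q_\lambda \sim T\}\ll \min\{q,\ N^{3+o(1)}/T^4 + 1\}$ against $T^4$; the crossover $N^{3}/T^4 \asymp 1$ gives $T\asymp N^{3/4}$ and contribution $N^{3}$, while using the bound $\#\{\lambda:Q_\lambda\sim T\}\le $ (number of $\lambda$ with $Q_\lambda\ge T$) $\le T^{-2}\sum_\mu Q_\mu^2 \le T^{-2}N^{3+o(1)}$ together with $T^4$ gives $T^2 N^{3+o(1)}$, maximized at the largest $T$, and since $Q_\lambda \le N$ always but also $Q_\lambda^2 \ll N^{2+o(1)}/|a|\cdot$ something forces $T^2 \ll N^{3/2}q^{-1/2}\cdot$(...)— reconciling these is where the $q^{-3/2}$ enters. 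Everything else (the $\max\{\cdot,1\}$ bookkeeping, the $r^{o(1)}$ divisor bounds already invoked in Lemma~\ref{lem:MSR}, and the $O(\log N)$ dyadic loss, which is $N^{o(1)}$) is routine.
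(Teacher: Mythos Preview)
Your overall architecture (dyadic decomposition in the size of $Q_\lambda$, then invoke Lemma~\ref{lem:MSR}) is exactly the paper's, and your treatment of the second alternative is correct: if $Q_\lambda\sim T$ falls in the second case of Lemma~\ref{lem:MSR}, the pair $(a,b)$ lives in a box of area $N^{3+o(1)}/T^4$ and determines $\lambda$ up to sign, so $\#\{\lambda:Q_\lambda\sim T\}\le N^{3+o(1)}/T^4$ and the contribution to the fourth moment is $N^{3+o(1)}$. That part is fine.

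The genuine gap is that you have misplaced the term $N^{13/2+o(1)}/q^{3/2}$: it does \emph{not} come from the second alternative, and your attempts to extract it there (via ``$+1$'' corrections, crossover analysis, etc.) cannot succeed. It comes from the \emph{first} alternative of Lemma~\ref{lem:MSR}, and the key input you are missing is the trivial first-moment bound
\[
\sum_{\lambda\in\Fq} Q_\lambda \ \ll\ N^2,
\]
which you mention in passing but never deploy. Concretely: if $Q_\lambda\sim\Delta$ lies in the first alternative then $\Delta\ll N^{3/2}/q^{1/2}+1$, and writing $\Delta^4\,\#\Lambda=\Delta^3\cdot(\Delta\,\#\Lambda)$ one uses $\Delta\,\#\Lambda\le\sum_\lambda Q_\lambda\ll N^2$ to get
\[
\Delta^4\,\#\Lambda \ \ll\ \Bigl(\frac{N^{3/2}}{q^{1/2}}+1\Bigr)^{\!3} N^2 \ \ll\ \frac{N^{13/2}}{q^{3/2}}+N^2.
\]
Your initial attempt to bound this case by $q\cdot\max\{N^{3/2}/q^{1/2},1\}^4$ uses the crude count $\#\Lambda\le q$ instead, which is precisely why you obtain the inadmissible term $q$ and then spiral into ad hoc fixes. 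Replace $\#\Lambda\le q$ by $\#\Lambda\le N^2/\Delta$ and the argument closes immediately.
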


\begin{proof}
By the dyadic Dirichlet pigeonhole principle there exists some $\Delta > 0$ such that for 
$$
\Lambda:=\{\lambda \in\Fq^{*} :~ \Delta \le Q_{\lambda,j}\(\mathbf{1}_{[N,2N]}\)<2\Delta\},
$$
we have 
\begin{equation}
\label{eq:XXXv}
 \sum_{\lambda \in\Fq^{*}}  Q^4_{\lambda,j}\(\mathbf{1}_{[N,2N]}\)\le N^{o(1)}\Delta^4 \# \Lambda.
\end{equation}
We also have trivial inequalities
\begin{equation} \label{trivineq}
\Delta\#  \Lambda \ll \sum_{ \lambda \in  \Lambda }  Q_{\lambda,j}\(\mathbf{1}_{[N,2N]}\)\le
 \sum_{ \lambda \in  \Fq}  Q_{\lambda,j}\(\mathbf{1}_{[N,2N]}\) \ll N^2.
\end{equation}
We now fix some absolute constant $c > 0$ and consider the following  two cases 
\begin{equation}
\label{eq:Ecase1}
\Delta \le c  \left(\frac{N^{3/2}}{q^{1/2}}+1\right),
\end{equation}
or 
\begin{equation} \label{eq:Ecase2}
\Delta > c \left(\frac{N^{3/2}}{q^{1/2}}+1\right).
\end{equation}

In the case when~\eqref{eq:Ecase1} holds, we have 
\begin{equation}
\label{eq:CASE177}
\begin{split}
\Delta^4\# \Lambda &\ll \left(\frac{N^{9/2}}{q^{3/2}}+1\right)\Delta\#  \Lambda
\\& \ll \left(\frac{N^{9/2}}{q^{3/2}}+1\right)\sum_{ \lambda \in  \Lambda }  Q_{\lambda,j}\(\mathbf{1}_{[N,2N]}\)\ll \frac{N^{13/2}}{q^{3/2}}+N^2,
\end{split} 
\end{equation}
where the last inequality follows from \eqref{trivineq}.

In the case when~\eqref{eq:Ecase2} holds,
we apply Lemma~\ref{lem:MSR}. 
For each $\lambda \in  \Lambda$, there exists $a,b \in \Fq^*$ satisfying 
$$
|a|\le \frac{N^{2+o(1)}}{\Delta^2}, \qquad |b|\le \frac{N^{1+o(1)}}{\Delta^2}, \qquad 2j\lambda ^2\equiv ab^{-1} \pmod{q}.
$$
Since the ratio $ab^{-1}$ can take at most 
$$
\frac{N^{2+o(1)}}{\Delta^2} \cdot  \frac{N^{1+o(1)}}{\Delta^2}  =  \frac{N^{3+o(1)}}{\Delta^{4}},
$$
values, the number of possible values of $\lambda \in \Lambda$ is bounded by at most twice the same quantity, that is, 
$$
\# \Lambda \le \frac{N^{3+o(1)}}{\Delta^{4}}.
$$
Substituting into~\eqref{eq:XXXv} we obtain 
\begin{equation} \label{lastcase}
\Delta^4 \# \Lambda  \le N^{3+o(1)}.
\end{equation}
Combining~\eqref{eq:CASE177} and~\eqref{lastcase} with~\eqref{eq:XXXv}, we conclude the proof. 
\end{proof}

\begin{lemma}
\label{lem:EqN}
For a weight $\bbeta$ as in~\eqref{eq:weight} supported on $[N,2N]$ with $2N \le q$, we have 
$$
E_{q,j}(\bbeta)\ll  \|\bbeta\|_{\infty}^{8/3}\|\bbeta\|_1^{4/3}\(\frac{N^{13/6}}{q^{1/2}}+N\)N^{o(1)}.
$$
\end{lemma}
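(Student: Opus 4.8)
The plan is to deduce Lemma~\ref{lem:EqN} from the fourth moment bound of Lemma~\ref{lem:EB} together with two trivial first moment estimates, via a single H\"older interpolation; no new input about square roots is needed beyond Lemma~\ref{lem:EB}. To begin, I would reduce to nonnegative weights. Writing $|\bbeta|=(|\beta_n|)_{n\sim N}$, the definition~\eqref{Qdef} and the triangle inequality give $|Q_{\lambda,j}(\bbeta)|\le Q_{\lambda,j}(|\bbeta|)$ for every $\lambda\in\Fq$, so by~\eqref{eq:E Q2} we have $|E_{q,j}(\bbeta)|\le\sum_{\lambda\in\Fq}|Q_{\lambda,j}(\bbeta)|^2\le E_{q,j}(|\bbeta|)$, while the $\ell^\infty$ and $\ell^1$ norms of $|\bbeta|$ agree with those of $\bbeta$. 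Hence it suffices to prove the bound for real nonnegative $\bbeta$, which I assume from now on.

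I would then isolate the diagonal term $\lambda=0$, which is not covered by Lemma~\ref{lem:EB}. Since any residue in $[N,2N]$ has at most two square roots modulo $q$, one has $\sum_{u\in\Fq}\beta_{ju^2}\le 2\|\bbeta\|_1$, whence
$$
Q_{0,j}(\bbeta)=\sum_{u\in\Fq}\beta_{ju^2}^2\le\|\bbeta\|_\infty\sum_{u\in\Fq}\beta_{ju^2}\le 2\|\bbeta\|_\infty\|\bbeta\|_1 .
$$
Because $\bbeta$ is supported on an interval of length at most $N$ we also have $\|\bbeta\|_1\ll N\|\bbeta\|_\infty$, so $Q_{0,j}(\bbeta)^2\ll\|\bbeta\|_\infty^2\|\bbeta\|_1^2\ll\|\bbeta\|_\infty^{8/3}\|\bbeta\|_1^{4/3}N$, which is absorbed into the asserted bound.

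For the off-diagonal contribution I would interpolate the $\ell^2$-norm of $\lambda\mapsto Q_{\lambda,j}(\bbeta)$ between $\ell^1$ and $\ell^4$, using H\"older's inequality in the form
$$
\sum_{\lambda\in\Fq^{*}}Q_{\lambda,j}(\bbeta)^2\le\Big(\sum_{\lambda\in\Fq^{*}}Q_{\lambda,j}(\bbeta)\Big)^{2/3}\Big(\sum_{\lambda\in\Fq^{*}}Q_{\lambda,j}(\bbeta)^4\Big)^{1/3},
$$
which is exactly the split $2=\tfrac23\cdot1+\tfrac13\cdot4$ forced by the target exponents. For the first factor, summing~\eqref{Qdef} over all $\lambda$ collapses the constraint $u-v=\lambda$ and yields $\sum_{\lambda\in\Fq}Q_{\lambda,j}(\bbeta)=\big(\sum_{u\in\Fq}\beta_{ju^2}\big)^2\le 4\|\bbeta\|_1^2$. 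For the second factor, $\beta_{ju^2}$ vanishes unless $ju^2\in[N,2N]$ and is at most $\|\bbeta\|_\infty$ there, so $Q_{\lambda,j}(\bbeta)\le\|\bbeta\|_\infty^2\,Q_{\lambda,j}(\mathbf{1}_{[N,2N]})$ and therefore $\sum_{\lambda\in\Fq^{*}}Q_{\lambda,j}(\bbeta)^4\le\|\bbeta\|_\infty^8\sum_{\lambda\in\Fq^{*}}Q_{\lambda,j}^4(\mathbf{1}_{[N,2N]})$; Lemma~\ref{lem:EB} bounds the latter by $\|\bbeta\|_\infty^8\big(N^{13/2+o(1)}q^{-3/2}+N^{3+o(1)}\big)$. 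Substituting both factors, extracting the cube root with the subadditivity of $t\mapsto t^{1/3}$, and adding back the $\lambda=0$ term produces precisely $\|\bbeta\|_\infty^{8/3}\|\bbeta\|_1^{4/3}\big(N^{13/6}q^{-1/2}+N\big)N^{o(1)}$, as claimed.

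I do not expect a genuine obstacle: the statement is essentially a convexity repackaging of Lemma~\ref{lem:EB}. The two points that need care are choosing the H\"older split so that the powers of $\|\bbeta\|_\infty$ and $\|\bbeta\|_1$ come out as exactly $8/3$ and $4/3$, and remembering that Lemma~\ref{lem:EB} excludes $\lambda=0$, so the diagonal term has to be treated separately, which is where the inequality $\|\bbeta\|_1\ll N\|\bbeta\|_\infty$ is used.
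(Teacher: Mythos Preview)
Your proposal is correct and follows essentially the same route as the paper's proof: separate the diagonal term $\lambda=0$, apply H\"older with the split $2=\tfrac23\cdot1+\tfrac13\cdot4$, bound the $\ell^1$-sum by $O(\|\bbeta\|_1^2)$ via~\eqref{eq:betaell1}, and bound the $\ell^4$-sum by passing to $\|\bbeta\|_\infty^8$ times the indicator-weight fourth moment and invoking Lemma~\ref{lem:EB}. The only cosmetic differences are that the paper keeps absolute values throughout rather than reducing to nonnegative weights, and it packages the diagonal contribution as $O(\|\bbeta\|_2^4)$ (bounded by $\|\bbeta\|_\infty^{8/3}\|\bbeta\|_1^{4/3}N^{2/3}$) rather than estimating $Q_{0,j}(\bbeta)^2$ directly.
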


\begin{proof}
Recalling~\eqref{eq:E Q2}, we see that 
\begin{equation}
\label{eq:EtoF*}
E_{q,j}(\bbeta)=\sum_{\lambda \in \Fq^{*}}|Q_{\lambda,j}(\bbeta)|^2+O(\|\bbeta\|_2^4).
\end{equation}
By the  H\"{o}lder inequality  we have 
\begin{equation}
\begin{split}
\label{eq:eto4h}
\sum_{\lambda \in \Fq^{*}}|Q_{\lambda,j}(\bbeta)|^2 \le \left(\sum_{\lambda \in \Fq^{*}}|Q_{\lambda,j}(\bbeta)| \right)^{2/3}\left(\sum_{\lambda \in \Fq^{*}}|Q_{\lambda,j}(\bbeta)|^4 \right)^{1/3}.
\end{split} 
\end{equation}
By the triangle inequality we have 
\begin{equation}
\label{eq:betaell1}
\sum_{\lambda \in \Fq^{*}}|Q_{\lambda,j}(\bbeta)|=O(\|\bbeta\|_1^2).
\end{equation}
We also have the trivial inequality 
$$
Q_{\lambda,j}(\bbeta)\le \|\bbeta\|_{\infty}^2.  Q_{\lambda,j}\(\mathbf{1}_{[N,2N]}\). 
$$
Therefore, by Lemma~\ref{lem:EB}, we have
\begin{equation}
\label{eq:betaell4}
\sum_{\lambda \in \Fq^{*}}|Q_{\lambda,j}(\bbeta)|^4\le  \|\bbeta\|_{\infty}^8\left(\frac{N^{13/2+o(1)}}{q^{3/2}}+N^{3+o(1)}\right).
\end{equation}
Substituting~\eqref{eq:betaell1} and~\eqref{eq:betaell4}  in~\eqref{eq:eto4h} we obtain 
\begin{equation}
\label{eq:Qb77}
\sum_{\lambda \in \Fq^{*}}|Q_{\lambda,j}(\bbeta)|^2\le \|\bbeta\|_{\infty}^{8/3}\|\bbeta\|_1^{4/3}\left(\frac{N^{13/6}}{q^{1/2}}+N\right)N^{o(1)}.
\end{equation}
Using~\eqref{eq:EtoF*} we complete the proof after observing 
$$
\|\bbeta\|_2^4\ll \|\bbeta\|_{\infty}^{8/3}\|\bbeta\|_1^{4/3}N^{2/3},
$$
which is always dominated by the upper bound in~\eqref{eq:Qb77}.
\end{proof}

We have another bound which does better for weights supported on longer intervals. For example, 
when $\bbeta$ satisfies~\eqref{eq:balanced} and also $ \| \bbeta \|_1 = N^{1+o(1)}$ our next bound  is stronger for $N \ge q^{1/2}$.

\begin{lemma}
\label{lem:EqN-2}
For a weight $\bbeta$ as in~\eqref{eq:weight} supported on $[N,2N]$ with $2N \le q$, we have 
$$
E_{q,j}(\bbeta)  \ll \| \bbeta \|^2_{\infty} \| \bbeta \|^2_1 (N^2/q + N^{1/2} q^{o(1)} ) . 
$$
\end{lemma}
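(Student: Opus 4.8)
The plan is to keep the architecture of the proof of Lemma~\ref{lem:EqN} but to replace the fourth–moment input (Lemma~\ref{lem:EB}) by completion estimates that isolate the diagonal term $N^{2}/q$, using the geometry–of–numbers dichotomy of Lemma~\ref{lem:MSR} only to mop up the short ranges. By the identity~\eqref{eq:E Q2} and the splitting~\eqref{eq:EtoF*} it suffices to bound $\sum_{\lambda\in\Fq^{*}}|Q_{\lambda,j}(\bbeta)|^{2}$, the term $\lambda=0$ being $O(\|\bbeta\|_{2}^{4})=O(\|\bbeta\|_{\infty}^{2}\|\bbeta\|_{1}^{2})$, which is dominated by the claimed bound since $N\ge1$. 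For $\lambda\ne0$ I would reduce to a pointwise estimate: since $\operatorname{supp}\bbeta\subseteq[N,2N]$ one has $|Q_{\lambda,j}(\bbeta)|\le\|\bbeta\|_{\infty}^{2}\,Q_{\lambda,j}\(\mathbf 1_{[N,2N]}\)$, while the triangle inequality gives $\sum_{\lambda\in\Fq}|Q_{\lambda,j}(\bbeta)|\le\bigl(\sum_{u\in\Fq}|\beta_{ju^{2}}|\bigr)^{2}\ll\|\bbeta\|_{1}^{2}$; combining the two,
$$
\sum_{\lambda\in\Fq^{*}}|Q_{\lambda,j}(\bbeta)|^{2}\ll\|\bbeta\|_{\infty}^{2}\|\bbeta\|_{1}^{2}\max_{\lambda\in\Fq^{*}}Q_{\lambda,j}\(\mathbf 1_{[N,2N]}\),
$$
so the whole lemma comes down to $\max_{\lambda\ne0}Q_{\lambda,j}\(\mathbf 1_{[N,2N]}\)\ll N^{2}/q+N^{1/2}q^{o(1)}$.

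For that pointwise bound I would argue by the size of $N$. Running the same completing–the–square substitutions as in the proof of Lemma~\ref{lem:MSR} reduces $Q_{\lambda,j}\(\mathbf 1_{[N,2N]}\)$ to the number of integers $e$ in an interval of length $O(N)$ with $c e^{2}\in[N,2N]$ (residue taken in $\{1,\dots,q\}$) for a fixed $c\in\Fq^{*}$. Completing this count over $e$ and over the target interval, the zero frequency gives the main term $\ll N^{2}/q$, and each nonzero frequency produces a complete quadratic Gauss sum $\cG_{q}(\cdot,\cdot)$ which, by~\eqref{eval}, has modulus exactly $\sqrt q$; bounding the remaining geometric sum together with $\sum_{h}\bigl|\sum_{m\in[N,2N]}\e_{q}(hm)\bigr|\ll q^{1+o(1)}$ gives $Q_{\lambda,j}\(\mathbf 1_{[N,2N]}\)\ll N^{2}/q+q^{1/2+o(1)}$, which already suffices for $N\ge q^{3/4}$. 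For the complementary range I would invoke Lemma~\ref{lem:MSR}: either $Q_{\lambda,j}\(\mathbf 1_{[N,2N]}\)\ll N^{3/2}/q^{1/2}+1$, or there are integers $a,b$ with $2j\lambda^{2}\equiv ab^{-1}\pmod q$ and $|b|\le N^{1+o(1)}/Q_{\lambda,j}\(\mathbf 1_{[N,2N]}\)^{2}$; since $b\ne0$ forces $|b|\ge1$, the second alternative already yields $Q_{\lambda,j}\(\mathbf 1_{[N,2N]}\)\ll N^{1/2+o(1)}$, and when $N\le q^{1/2}$ the first alternative is itself $\ll N^{3/2}/q^{1/2}+1\le N^{1/2}+1$, so that case is finished outright.

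As a parallel (and cleaner) route to the large‑$N$ range, I would apply the incomplete square‑root sum bound of Lemma~\ref{lem:Incom Sqrt} to
$$
\sum_{\lambda\in\Fq}Q_{\lambda,j}\(\mathbf 1_{[N,2N]}\)^{2}=\frac1q\sum_{h\in\Fq}\Bigl|\sum_{u:\,ju^{2}\in[N,2N]}\e_{q}(hu)\Bigr|^{4},
$$
where the zero frequency produces $K^{4}/q$ (with $K=\#\{u:ju^{2}\in[N,2N]\}$) and every nonzero frequency is $\ll q^{1/2+o(1)}$; this, with Parseval, gives $\sum_{\lambda}Q_{\lambda,j}\(\mathbf 1_{[N,2N]}\)^{2}\ll N^{4}/q+q^{1+o(1)}N$, i.e.\ the desired bound once $N\ge q^{2/3}$. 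Assembling the three contributions — $\lambda=0$; $\lambda\ne0$ with $Q_{\lambda,j}\(\mathbf 1_{[N,2N]}\)$ small; $\lambda\ne0$ structured via Lemma~\ref{lem:MSR} — then gives $E_{q,j}(\bbeta)\ll\|\bbeta\|_{\infty}^{2}\|\bbeta\|_{1}^{2}(N^{2}/q+N^{1/2}q^{o(1)})$.

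The hard part is the balancing in the Pólya--Vinogradov window $q^{1/2}\le N\le q^{3/4}$: naive completion only controls the error by $q^{1/2+o(1)}$ and the bare geometry–of–numbers argument of Lemma~\ref{lem:MSR} only by $N^{3/2}/q^{1/2}$, both of which exceed $N^{1/2}q^{o(1)}$ there. Getting the sharp exponent requires feeding the structural output of the second alternative of Lemma~\ref{lem:MSR} — a short vector in the lattice $\{(x,y):x\equiv ys\pmod q\}$, governed by Minkowski's second theorem through Lemmas~\ref{lem:mst} and~\ref{lem:latticesm} — back into the completed sum, so that the approximation $2j\lambda^{2}\approx a/b$ with small $a,b$ forces the attendant Gauss–sum error to collapse; organising this interplay cleanly, and tracking which of the two mechanisms dominates in each dyadic range of $N$ relative to $q$, is the crux. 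As recorded after the statement, the resulting bound improves on Lemma~\ref{lem:EqN} precisely when $N\ge q^{1/2}$.
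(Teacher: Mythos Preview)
Your overall architecture matches the paper exactly: reduce via~\eqref{eq:EtoF*} to $\sum_{\lambda\ne0}|Q_{\lambda,j}(\bbeta)|^{2}$, bound this by $\|\bbeta\|_{\infty}^{2}\|\bbeta\|_{1}^{2}\max_{\lambda\ne0}Q_{\lambda,j}(\mathbf 1_{[N,2N]})$, and then prove the pointwise estimate $Q_{\lambda,j}(\mathbf 1_{[N,2N]})\ll N^{2}/q+N^{1/2}q^{o(1)}$. The paper also performs the same change of variables you describe, arriving at the count~\eqref{count} of points $(Z,V)$ in a box of side $O(N)$ on the curve $(Z-j\lambda^{2})^{2}=4j\lambda^{2}V$.

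The gap is in how you obtain the pointwise bound. The paper does not split on the size of $N$ at all: after~\eqref{count} it simply invokes~\cite[Theorem~5]{CCGHSZ}, which for degree-two polynomials gives $Q_{\lambda,j}(\mathbf 1_{[N,2N]})\ll N^{2}/q+N^{1/2}q^{o(1)}$ uniformly. Your attempt to reconstruct this from the in-paper tools leaves the window $q^{1/2}<N<q^{2/3}$ uncovered: Lemma~\ref{lem:MSR} in its first alternative only yields $N^{3/2}/q^{1/2}$, which exceeds both $N^{2}/q$ and $N^{1/2}$ there, and your completion arguments kick in only for $N\ge q^{2/3}$ (the energy route) or $N\ge q^{3/4}$ (the max route). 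Your final paragraph acknowledges this and sketches a plan to ``feed the structural output of the second alternative of Lemma~\ref{lem:MSR} back into the completed sum'', but that is a description of a hoped-for mechanism, not an argument; in particular nothing prevents a given $\lambda$ from landing in the first alternative of the dichotomy, where no short vector is produced. So as written the proof is incomplete in precisely the range where the lemma is meant to improve on Lemma~\ref{lem:EqN}. The clean fix is the one the paper uses: cite~\cite[Theorem~5]{CCGHSZ} for the quadratic case.
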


\begin{proof}  From~\eqref{eq:EtoF*} and~\eqref{eq:betaell1} we derive
\begin{equation} \label{master}
|E_{q,j}(\bbeta)| \leq \max_{\lambda \in \Fq^{\times}} |Q_{\lambda,j}(\bbeta) | \cdot \sum_{\lambda \in \Fq^{\times}}  |Q_{\lambda,j}(\bbeta) |+O \( \| \bbeta \|^{4}_2 \).
\end{equation}

The triangle inequality gives
\begin{equation} \label{sumCR}
 \sum_{\lambda \in \Fq^{\times}}  |Q_{\lambda,j}(\bbeta) |\ll  \| \bbeta \|^2_1 .
\end{equation}
Now,
\begin{equation} \label{intermed}
\max_{\lambda \in \Fq^{\times}} |Q_{\lambda,j}(\bbeta)|  \leq \| \bbeta \|_{\infty}^2 \max_{\lambda \in \Fq^{\times}} Q_{\lambda,j}(\mathbf{1}_{[N,2N]}).
\end{equation}

Next, we  show that  
 \begin{equation} 
 \begin{split}
 \label{count}
 Q_{\lambda,j}(\mathbf{1}_{[N,2N]}) \le  4  \cdot \#  \bigl \{ (Z,V) \in [-2N,2N]  \times & [N,2N]:\\
  (Z-j \lambda^2)^2&=4 \lambda^2 jV \bigr\}. 
 \end{split}
\end{equation}
Indeed, recall that 
$$
Q_{\lambda,j}(\mathbf{1}_{[N,2N]})= \sum_{\substack{ (u,v) \in \Fq^2 \\ u - v = \lambda \\ ju^2, jv^2 \in [N,2N] }} 1.
$$
Making a change of variables 
$$
U=ju^2 \quad \text{and} \quad V=jv^2, 
$$
and using the linear equation $u-v=\lambda$, we see that 
$$
U-V=j \lambda(2v+\lambda).
$$
Rearranging and squaring, we obtain
$$
(U-V-j \lambda^2)^2=4 \lambda^2 jV. 
$$
Making the linear change in variables 
$$
Z:=U-V, 
$$
we obtain 
\begin{equation} \label{quadvar}
(Z-j \lambda^2)^2=4 j \lambda^2 V.
\end{equation}
Given any solution $(Z,V)$ to~\eqref{quadvar}, this corresponds to at most 4 pairs $(u,v) \in \mathbb{F}^2_q$, and so this establishes~\eqref{count}.  
We apply~\cite[Theorem~5]{CCGHSZ}, which in the special case of 
quadratic polynomials implies
 $$
Q_{\lambda,j}(\mathbf{1}_{[N,2N]}) \ll N^2/q + N^{1/2}  q^{o(1)}
$$
 uniformly with respect to $j,\lambda \in \Fq^{\times}$.  Combining this bound with~\eqref{intermed} gives
\begin{equation} \label{CGOSbd}
 \max_{\lambda \in \Fq^{\times}} |Q_{\lambda,j}(\bbeta) |  \ll \| \bbeta \|^2_{\infty} \(N^2/q + N^{1/2} q^{o(1)} \).
\end{equation}
Using~\eqref{sumCR} and~\eqref{CGOSbd} in~\eqref{master} we obtain 
$$
|E_{q,j}(\bbeta)| \ll \| \bbeta \|^2_1 \| \bbeta \|^2_{\infty}  (N^2/q + N^{1/2} q^{o(1)})  + \| \bbeta \|^4_2.
$$
Since trivially
$$
\| \bbeta \|_2 \le  \| \bbeta \|_1^{1/2}   \| \bbeta \|_{\infty}^{1/2}
 $$ 
we now derive the desired result. 
\end{proof}

We further remark that for small $N$ yet another bound on  additive energy of   modular square roots is possible,
improving that of Lemmas~\ref{lem:EqN} and~\ref{lem:EqN-2}.
This bound does not improve our main results but since the question is of independent interest 
we present it in Appendix~\ref{app:A}.

\subsection{Bounding bilinear sums via additive energy} 
Applying the Cauchy--Schwarz
inequality, expanding the second square and  then interchanging summation, gives 
$$
|W_{a,q}(\balpha, \bbeta; h,M,N)|^2  \leq   \| \balpha  \|^2_2  \sum_{n_1,n_2\sim N}  \beta_{n_1} \overline{ \beta}_{n_2}  \sum_{m \sim M}  \sum_{\substack{u,v \in \Fq \\u^2 = amn_1\\v^2 = amn_2}} \eq(h(u-v)).
$$
We now write  
\begin{equation}
\label{eq:WRR}
|W_{a,q}(\balpha, \bbeta; h,M,N)|^2  \le \| \balpha \|^2_2 (R_1 + R_{-1})
\end{equation}
where 
$$
R_j:= \sum_{\substack{ n_1,n_2\sim N \\  \(\frac{n_1}{q} \) = \(\frac{n_2}{q} \) = j  }} \beta_{n_1} \overline{ \beta}_{n_2}  \sum_{\substack{m \sim M \\ \(\frac{am}{q} \) =j }} \hspace{0.1cm}\sum_{\substack{u,v \in \Fq \\u^2 = amn_1\\v^2 = amn_2}} \eq(h(u-v)).
$$

Both sums can be estimated analogously, so we only concentrate on $R_1$. Simplifying $R_1$, we obtain 
$$
R_1=\frac{1}{2} \sum_{n_1,n_2\sim N} \beta_{n_1} \overline{\beta}_{n_2}  \sum_{m \sim M} \,  \sum_{\substack{t \in \Fq \\ t^2 = am}} \, \sum_{\substack{u,v \in \Fq \\u^2 = n_1\\v^2 = n_2}}
\eq(ht (u-v)).
$$
Collecting the terms with the same value of $\lambda = u-v$ gives
$$  
R_1= \frac{1}{2}  \sum_{\lambda \in \Fq} A_{h,\lambda,a}  Q_{\lambda,1}(\bbeta), 
$$
where 
$$
A_{h,\lambda,a} :=    \sum_{m \sim M}  \sum_{\substack{t \in \Fq \\ t^2 = am}}  \eq(ht\lambda)
$$ 
and $ Q_{\lambda,1}(\bbeta)$ is defined in~\eqref{Qdef}.

We  also notice that 
 \begin{equation}
\label{eq:A4}
 \sum_{\lambda \in \Fq} |A_{h,\lambda,a}|^4 \ll q E_{q,b}\(\mathbf{1}_{[M,2M]}\), 
\end{equation}
where $b$ is defined by $ab \equiv 1 \pmod q$, $1 \le b < q$. 

Thus, writing $|Q_{\lambda,1}(\bbeta)| = \(|Q_{\lambda,1}(\bbeta)|^2\)^{1/4} |Q_{\lambda,1}(\bbeta)|^{1/2},$  by the H{\"o}lder inequality, 
using~\eqref{eq:betaell1} (the bound~\eqref{eq:betaell1} also holds when sum is over all $\lambda \in \mathbb{F}_q$) and~\eqref{eq:A4} we derive
 \begin{equation}
 \begin{split}
\label{eq:RQ2}
|R_1|^4  & \ll q E_{q,b}\(\mathbf{1}_{[M,2M]}\) \sum_{\lambda \in \Fq} |Q_{\lambda,1}(\bbeta)|^2 \(\sum_{\lambda \in \Fq} |Q_{\lambda,1}(\bbeta)|\)^2\\
& \ll q  \| \bbeta \|^4_1 E_{q,b}\(\mathbf{1}_{[M,2M]}\)   E_{q,1}\(\bbeta\), 
\end{split}
\end{equation}
as well as a full analogue of~\eqref{eq:RQ2} for $R_{-1}$.

Now using  Lemma~\ref{lem:EqN}, we obtain 
 \begin{equation}
 \begin{split}
\label{eq:R-fin}
|R_1|^4   
\le   q^{1+o(1)}   \|\bbeta\|_{\infty}^{8/3}&\|\bbeta\|_1^{4+4/3}\(\frac{N^{13/6}}{q^{1/2}}+N\)  \\
& \qquad \quad \(M^{7/2}q^{-1/2} + M^{7/3}\) , 
\end{split}
\end{equation} 
which now together with~\eqref{eq:WRR}
implies the first bound of Theorem~\ref{thm:Waq}.  

To obtain the second bound of Theorem~\ref{thm:Waq}, we follow the same argument but use Lemma~\ref{lem:EqN-2}
instead of Lemma~\ref{lem:EqN}, which concludes the proof.

\begin{remark} By orthogonality we have, 
$$
 \sum_{\lambda \in \Fq} |A_{h,\lambda,a}|^2 \ll q M.
$$
Hence, instead of~\eqref{eq:RQ2} we have
$$
|R_1|^2  \ll q M \sum_{\lambda \in \Fq} | Q_{\lambda,1}(\bbeta)|^2.
$$
From Lemma~\ref{lem:EqN} we now derive
\begin{align*}
|W_{a,q}(\balpha, \bbeta&; h,M,N)|\\
& \le 
q^{1/4}M^{1/4} \| \balpha \|_2  \|\bbeta\|_{\infty}^{2/3}\|\bbeta\|_1^{1/3}\(\frac{N^{13/24}}{q^{1/8}}+N^{1/4}\)N^{o(1)}\end{align*}
and similarly we can get yet another bound using Lemma~\ref{lem:EqN-2}.
\end{remark}

We also discuss some alternative approaches to bound bilinear sums  in Appendix~\ref{app:B}.

\section{Equidistribution of roots of primes: Proof of Theorem~\ref{thm:disc p} }

\subsection{Preliminary transformations} 
For $P \leq q$, let $\cP_{q}(P)$ be the set of primes $p\le P$, $p\ne q$, such that $p$ is a quadratic residue modulo $q$. To study the discrepancy of the roots of these quadratic congruences, we introduce the exponential sum
$$
S_{q}(h,P):= \sum_{p \in \cP_{q}(P)} \sum_{\substack{x \in \Fq \\ x^2 = p}} \eq(hx) =  \sum_{p \le P} \sum_{\substack{x \in \Fq \\ x^2 = p}} \eq(hx) +O(1)
$$
(where the term $O(1)$ acoounts for $q=p$ which has possibly been added to the sum). We see that Lemma~\ref{lem:ET} reduces the discrepancy 
question  to 
estimating the sums $S_{q}(h,P)$. 

In fact, following the standard principle, we also introduce the sums
\begin{equation} \label{vansum}
\widetilde{S}_{q}(h,P) = \sum_{k=1}^P \Lambda(k) \sum_{\substack{x \in \Fq \\ x^2 =k}} \eq(hx)\\
\end{equation}
where, as usual we use $\Lambda$ to denote the {\it von Mangoldt function\/}. Clearly, one can bound the sums $S_{q}(h,P)$ via the sums $\widetilde{S}_{q}(h,t)$, $t \le P$, using 
partial summation.

\subsection{The Heath-Brown identity} 

To estimate the sum~\eqref{vansum} we apply the Heath-Brown identity in the form  given 
by~\cite[Lemma~4.1]{FKM}
(see also~\cite[Proposition~13.3]{IwKow})   as well as a smooth partition of 
unity from~\cite[Lemm\'{e}~2]{Fouv} (or~\cite[Lemma~4.3]{FKM}). 
 
We also fix some parameters $L,H$ satisfying 
\begin{equation} \label{parametercond}
P^{1/2} \ge \newY \ge H \ge 1,
\end{equation}
to be optimised later.  Also define 
\begin{equation}
 \label{def:J}
J = \rf{\log P/\log H}.
\end{equation}
We always assume that $H$ exceeds some fixed small power of $q$ so we always have $J \ll 1$.
 
Now, as in~\cite[Lemma~4.3]{FKM}, we decompose  $\widetilde{S}_q(h,P)$ into a linear combination of $O(\log^{2J} q)$ sums
with coefficients bounded by $O(\log q)$,
\begin{align*}
\Sigma(\mathbf{V}):=\sum_{m_1, \ldots, m_J=1}^{\infty} &\gamma_1(m_1)\cdots  \gamma_J(m_J)  \sum_{n_1,\ldots , n_J=1}^\infty \\
\\& V_1 \( \frac{n_1}{N_1} \)  \cdots V_J \( \frac{n_J}{N_J} \)
\sum_{\substack{x \in \Fq \\ x^2=m_1 \cdots m_J n_1 \cdots  n_J}} \e_q(hx), 
\end{align*}  
where  
\begin{equation} \label{eq:cond1}
\mathbf{V}:=(M_1,\ldots , M_J,N_1,\ldots  ,N_J) \in [1/2,2P]^{2J} 
\end{equation}
is a $2J$-tuple of parameters satisfying
 \begin{equation} \label{eq:cond2}
N_1 \geq \ldots \ge N_J, \quad  M_1,\ldots ,M_J \leq P^{1/J},\quad   P \ll  Q \ll  P,
\end{equation}
(implied constants are allowed to depend on $J$),  
 \begin{equation} \label{eq:prod Q}
Q =  \prod_{i=1}^J M_i \prod_{j =1}^JN_j,
\end{equation}  
and
\begin{itemize}
\item the arithmetic functions $m_i \mapsto \gamma_i(m_i)$ are bounded and supported in $[M_i/2,2M_i]$;
\item the smooth functions $x_i \mapsto V_i(x)$ have support in $[1/2,2]$ and satisfy
$$
V^{(j)}(x) \ll  q^{j \varepsilon}
$$
for all integers $j \geq 0$, where the implied constant may depend on $j$ and $\varepsilon$. 
\end{itemize}  

We recall that $a \sim A$ as an equivalent of $a \in [A, 2A]$. Hence we 
can rewrite the sums $\Sigma(\mathbf{V})$ in the following form
\begin{equation}
 \label{eq:SigmaP}
\begin{split}
\Sigma(\mathbf{V})= \sum_{\substack{m_i \sim M_i,  n_i \sim N_i\\ i =1, \ldots J}}  \gamma_1(m_1)\cdots  \gamma_J(m_J) 
 & V_1 \( \frac{n_1}{N_1} \)  \cdots V_J \( \frac{n_J}{N_J} \) \\ 
& 
\sum_{\substack{x \in \Fq \\ x^2=m_1 \cdots m_J n_1 \cdots  n_J}} \e_q(hx). 
\end{split}  
\end{equation} 
In particular, we see that the sums $\Sigma(\mathbf{V})$ are supported on a finite set. 
We now collect various bounds on the sums $\Sigma(\mathbf{V})$ which we derive 
in various ranges of parameters $M_1,\ldots , M_J,N_1,\ldots  ,N_J$ until we cover the whole
range in~\eqref{eq:cond1}.

\subsection{Bounds of multilinear sums} 

To estimate the multilinear sums $\Sigma(\mathbf{V})$, we put $N_1$ in ranges which we call ``{\it{small\/}}'', ``{\it{moderate\/}}", ``{\it{large\/}}" and  ``{\it{huge\/}}''.
We further split the ``{\it{moderate\/}}" range in subranges depending on
``{\it{small\/}}" and ``{\it{large\/}}" values of $N_2$.

Our main tool is the bound~\eqref{eq:simple2}. 
It is convenient to assume that 
\begin{equation}
 \label{eq: P large}
P\ge q^{13/20},
\end{equation}  
as otherwise the result is trivial. We first note a general estimate. Let $\cI,\cJ\subseteq \{1,\dots,J\}$ and write 
\begin{equation}
\label{eq:MNcase1}
M=\prod_{i\in \cI}M_i\prod_{j\in \cJ}N_j, \quad N=Q/M.
\end{equation}
Grouping variables in~\eqref{eq:SigmaP} according to $\cI,\cJ$, there exists $\alpha,\beta$ satisfying 
$$
\|\alpha\|_{\infty}, \|\beta\|_{\infty}=Q^{o(1)},
$$ 
such that 
$$
\Sigma(\mathbf{V})=\sum_{\substack{m\le 2^J M \\ n\le 2^J N}}\alpha(m)\beta(n)\sum_{\substack{x\in \mathbb{F}_q \\ x^2=mn}}e_q(hx).
$$
By~\eqref{eq:simple2} we have 
$$
 \Sigma(\mathbf{V})\ll  q^{1/8}(MN)^{19/24+o(1)}\left(\frac{M^{7/48}}{q^{1/16}}+1\right)\left(\frac{N^{7/48}}{q^{1/16}}+1\right).
$$
Using~\eqref{eq:MNcase1} this simplifies to
\begin{equation}
\label{eq:Sigma77} 
\begin{split} 
 \Sigma(\mathbf{V}) &\ll  Q^{15/16+o(1)}+q^{1/8}Q^{19/24+o(1)} \\ & +\frac{q^{1/16}Q^{15/16+o(1)}}{N^{7/48}}+q^{1/16}Q^{19/24+o(1)}N^{7/48}.
\end{split} 
\end{equation}
 
\subsection{Case I: Small $N_1$} 
First we consider the case when 
\begin{equation}
 \label{eq: small N1}
 N_1  \le H.
\end{equation}

From the definition of $J$ in~\eqref{def:J} and the condition~\eqref{eq:cond2}  we see that 
\begin{equation}
 \label{eq: small M}
M_1,\ldots ,M_J \leq  H.
\end{equation}

We see that if~\eqref{eq: small N1} holds then we 
can  choose two arbitrary sets $\cI, \cJ \subseteq\{1, \ldots, J\}$ such that for 
$$
M = \prod_{i\in \cI} M_i \prod_{j \in \cJ} N_j \mand N = Q/M, 
$$
where $Q$ is given by~\eqref{eq:prod Q} and we have 
 \begin{equation}
 \label{eq: N q12}
P^{1/2} \ll N \ll H^{1/2}P^{1/2}. 
\end{equation}
Indeed, we simply start multiplying consecutive elements of the sequence $M_1,\ldots , M_J,N_1,\ldots  ,N_J$
until the product $Q_+$ exceeds $P^{1/2}$ while the previous product $Q_- < P^{1/2}$. 
Since by~\eqref{eq: small N1} and~\eqref{eq: small M} each factor is at most $H$, we have $Q_+ < H Q_-$.  
Hence  
\begin{itemize}
\item either we have $P^{1/2} \le Q_+ \le P^{1/2}H^{1/2}$ and then we set 
 $M= Q/Q_+$ and $N = Q_+ $;
\item  or we have $P^{1/2} >Q_- > H^{-1/2} P^{1/2}$ and then we set 
$M = Q_-$ and $N= Q/Q_-$. 
\end{itemize}
Hence in either case  the corresponding  $N$ satisfies the 
upper bound in~\eqref{eq: N q12}.  

In this case, since for $N \gg P^{1/2}$ we have $M \ll P/N \ll  P^{1/2}$.
Using~\eqref{eq:Sigma77} this becomes
\begin{equation}
\label{eq:case1final}
\Sigma(\mathbf{V})   \ll Q^{15/16+o(1)}+q^{1/8}Q^{19/24+o(1)}+q^{1/16}Q^{83/96+o(1)}H^{7/96}.
\end{equation}

\subsection{Case II: Moderate $N_1$}
Suppose 
$$
H<N_1\le \newY .
$$

If $N_2<H$ we may argue as in Case I to obtain the bound~\eqref{eq:case1final}. Hence we may suppose 
$$
H\le N_2 \le N_1\le \newY .
$$
In this case we define $M,N$ as 
$$
M=\prod_{i=1}^{J}\prod_{j=3}^{J}N_j \quad \text{and} \quad N=N_1N_2,
$$
so that 
$$
H^2\le N \le \newY ^2.
$$
By~\eqref{eq:Sigma77} we have 
\begin{equation}
\begin{split} 
\label{eq:case2final} 
\Sigma(\mathbf{V}) &\ll  Q^{15/16+o(1)}+q^{1/8}Q^{19/24+o(1)} \\ &\qquad  +\frac{q^{1/16}Q^{15/16+o(1)}}{H^{7/24}}+q^{1/16}Q^{19/24+o(1)}\newY ^{7/24}.
\end{split} 
\end{equation}

\subsection{Case III: Large $N_1$}
Consider next when 
$$
\newY \le N_1\le \newY ^2.
$$
In this case we set
$$
M=\prod_{i=1}^{J}M_i\prod_{j=2}^{J}N_j, \quad N=N_1.
$$
Using~\eqref{eq:Sigma77}
\begin{equation}
\label{eq:case3final} 
\begin{split} 
 \Sigma(\mathbf{V}) &\ll  Q^{15/16+o(1)}+q^{1/8}Q^{19/24+o(1)} \\ 
& \qquad +\frac{q^{1/16}Q^{15/16+o(1)}}{\newY ^{7/48}}+q^{1/16}Q^{19/24+o(1)}\newY ^{7/24}.
\end{split} 
\end{equation}

\subsection{Case IV: Huge $N_1$} 
 We now consider the case  when
\begin{equation}
 \label{eq: huge N1}
\newY ^2 <N_1 \le P.
\end{equation}  
In this case we write 
\begin{align*} 
\Sigma(\mathbf{V})  = \sum_{\substack{m_i \sim M_i \\ i =1, \ldots, J}} \gamma_1(m_1)\cdots  \gamma_J(m_J)  
& \sum_{\substack{ n_i \sim N_i\\ i =2, \ldots , J}}V_2 \( \frac{n_2}{N_2} \)  \cdots V_J \( \frac{n_J}{N_J} \) \\
& \sum_{n_1\sim N_1} V_1\( \frac{n_1}{N_1} \)
\sum_{\substack{x \in \Fq \\ x^2=m_1 \cdots m_J n_1 \cdots  n_J}} \e_q(hx). 
\end{align*}

For each fixed choice of $m_1,\ldots, m_J$ and $n_2, \ldots, n_J$ we set 
$$a=m_1\cdots m_J n_2 \cdots n_J,
$$ 
and we complete the innermost summation over $n_1$
using the standard completion technique, see~\cite[Section~12.2]{IwKow}. 
More precisely, partial summation gives 
\begin{align*}
 \sum_{n_1\sim N_1}  V_1 \( \frac{n_1}{N_1} \)  &\sum_{\substack{x \in \Fq \\ x^2=a n_1}} \e_q(hx)\\
 & =- \frac{1}{N_1} \int_{N_1/2}^{2 N_1} V^{\prime} \( \frac{u}{N_1} \)   \sum_{N_1/2 \leq w<u} \sum_{\substack{x \in \Fq \\ x^2=a w}} \e_q(hx)   du.
\end{align*}
 
Recalling Lemma~\ref{lem:Incom Sqrt}, we conclude that
$$
\sum_{n_1\sim N_1} V_1 \( \frac{n_1}{N_1} \)  \sum_{\substack{x \in \Fq \\ x^2=a n_1}} \e_q(hx) \ll q^{1/2+o(1)}.
$$ 
Therefore, by~\eqref{eq: huge N1}
\begin{equation}
\begin{split} 
 \label{eq: Bound4}
\Sigma(\mathbf{V}) & \ll M_1 \cdots  M_J N_2 \cdots  N_J  q^{1/2+o(1)}  \\
&= P  N_1^{-1}   q^{1/2+o(1)} \le  \frac{P   q^{1/2+o(1)}}{L^2}. 
\end{split} 
\end{equation}

\subsection{Optimisation and concluding the proof}  
Now, combining the bounds~\eqref{eq:case1final}, \eqref{eq:case2final}, \eqref{eq:case3final} and~\eqref{eq: Bound4} we obtain
$$
\Sigma(\mathbf{V}) \ll \(Q^{15/16}+q^{1/8}Q^{19/24} + T_1 + T_2 + U_1+U_2 + U_3\) Q^{o(1)} , 
$$
where 
\begin{equation}
 \label{eq: TTerms}
T_1 = q^{1/16}Q^{83/96}H^{7/96},\qquad T_2 = \frac{q^{1/16}Q^{15/16}}{H^{7/24}}
\end{equation}
and 
\begin{equation}
 \label{eq: UTerms}
U_1 = \frac{q^{1/16}Q^{15/16}}{\newY ^{7/48}}, \quad U_2 = q^{1/16}Q^{19/24}\newY ^{7/24}, \quad U_3 =\frac{q^{1/2}Q}{\newY ^2}.
\end{equation}

First we choose $H$ to balance $T_1$ and $T_2$ in~\eqref{eq: TTerms}   and choose $\newY $   to balance $U_2$ and $U_3$ in~\eqref{eq: UTerms}.
 This gives:
\begin{equation} \label{HLchoice}
H=Q^{1/5} \mand \newY =q^{21/110}Q^{1/11}.
\end{equation}
Recalling~\eqref{eq: P large}, we have $H\ge q^{13/100}$ and 
hence $J \ll 1$ as required. Using the fact that $Q\ll P$, we see that the conditions in \eqref{parametercond} 
are satisfied provided 
$$
q^{7/15} \le Q\le q^{7/4}.
$$
Recalling~\eqref{eq: P large}, without loss of generality we may assume
$$
q^{13/20} \le Q \le q^{7/4}.
$$

The choice of $H$ and $\newY $ in~\eqref{HLchoice} and the fact that $Q\ll P$, gives 
\begin{align*}
\Sigma(\mathbf{V})&\ll P^{15/16+o(1)}+q^{1/8}P^{19/24+o(1)}+q^{1/16}P^{211/240+o(1)} \\
&\qquad \qquad \qquad +q^{61/1760}P^{61/66+o(1)}+q^{13/110}P^{9/11+o(1)}.
\end{align*} 
Noting that for $q^{13/20} \le P\le q$, either the last or second last term dominates the sum, this simplifies to 
$$
\Sigma(\mathbf{V}) \ll q^{61/1760}P^{61/66+o(1)}+q^{13/110}P^{9/11+o(1)},
$$
which completes the proof of Theorem~\ref{thm:disc p}.

\begin{remark}\label{rem:U1U2} 
One can also choose $L = q^{1/3}$ to balance $U_1$ and $U_2$ in~\eqref{eq: UTerms}. This sometimes 
gives a better bound but only for $P \ge q^{3/4}$, while, as we have mentioned, we are interested in as small as possible 
values of $P$.
\end{remark}

\appendix 

\section{Additive energy bounds of modular square roots} 
\label{app:A}

For small $N$ we  have an  improvement of Lemma~\ref{lem:EqN}.
To emphasise the ideas we consider the following special case of the quantity $E_{q,j}(\bbeta)$ from Section~\ref{sec:E-bounds}.
 For an integer $N$ we define  
$$
E_q(N) := \#\{ (u,v,x,y) \in \Fq:~ u^2,v^2, x^2, y^2 \sim N \ \text{and} \ u + v = x+y\}
$$
(recall that $u^2,v^2, x^2, y^2$ are  all computed modulo $q$). 

\begin{prop}
\label{prop:EqN-small}
For any  positive integer $ N \le q^{1/2}$, we have 
$$
E_q(N)  \le  N^6q^{-1 + o(1)}  + N^2q^{o(1)}.
$$
\end{prop}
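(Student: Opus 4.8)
The plan is to split according to the size of $N$. If $N\ge q^{3/11}$ the bound is already contained in Lemma~\ref{lem:EqN}: recognising $E_q(N)$ as the special case $\bbeta=\mathbf 1_{[N,2N]}$, $j=1$ of $E_{q,j}(\bbeta)$ (so $\|\bbeta\|_\infty=1$, $\|\bbeta\|_1\ll N$), Lemma~\ref{lem:EqN} gives $E_q(N)\ll(N^{7/2}q^{-1/2}+N^{7/3})q^{o(1)}$, and one checks $N^{7/2}q^{-1/2}\le N^6q^{-1}$ for $N\ge q^{1/5}$ and $N^{7/3}\le N^6q^{-1}$ for $N\ge q^{3/11}$, so both terms are absorbed into $N^6q^{-1+o(1)}$. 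From now on assume $N<q^{3/11}$, so in particular $N^2$ is much smaller than $q$.

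For this range the idea is to exploit that the reduced residues $u^2\bmod q$, etc., have size $O(N)$, which forces strong arithmetic constraints once the additive relation is squared twice. Write a quadruple counted by $E_q(N)$ as $(u,v,x,y)$ with $a=u^2\bmod q$, $b=v^2\bmod q$, $c=x^2\bmod q$, $d=y^2\bmod q$ in $[N,2N]$ and $u+v\equiv x+y\pmod q$, and set $s=(c+d)-(a+b)\in[-2N,2N]$. Squaring $u+v\equiv x+y$ and substituting $u^2\equiv a$, etc., gives $2(uv-xy)\equiv s\pmod q$; squaring again and using $(uv)^2\equiv ab$, $(xy)^2\equiv cd$ gives, when $s\ne0$, that $uv\equiv Z:=\overline{4s}\,(4ab+s^2-4cd)\pmod q$ with $Z^2\equiv ab$, equivalently $Y^2\equiv 16s^2ab\pmod q$ where $Y:=4ab+s^2-4cd\in\Z$ satisfies $|Y|\le 16N^2$. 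Conversely, from a tuple $(s,Y,a,b)$ with $1\le|s|\le2N$, $|Y|\le16N^2$, $a,b\in[N,2N]$ and $Y^2\equiv16s^2ab\pmod q$ one recovers $cd=(4ab+s^2-Y)/4$, $c+d=s+a+b$, and then $uv\equiv Z$, $u+v\equiv\pm\sqrt{a+b+2Z}$, $xy\equiv Z-\overline2\,s$, $x+y\equiv u+v$, leaving only $O(1)$ choices of $(u,v,x,y)$. The case $s=0$ forces $uv\equiv xy$ and $u+v\equiv x+y$, hence $\{u,v\}=\{x,y\}$, contributing $\ll N^2$. Thus $E_q(N)\ll N^2+\mathcal N$ where
\[
\mathcal N:=\#\{(s,Y,a,b):\ 1\le|s|\le2N,\ |Y|\le16N^2,\ a,b\in[N,2N],\ Y^2\equiv16s^2ab\pmod q\}.
\]

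To bound $\mathcal N$, fix $(s,Y)$ with $s\ne0$: then $ab\equiv Y^2\overline{16s^2}\pmod q$, and since $ab\in[N^2,4N^2]\subseteq[0,q)$ the integer $ab$ must equal the residue of $Y^2\overline{16s^2}$ in $[0,q)$, so that residue has to lie in $[N^2,4N^2]$, and then the divisor bound leaves $\ll q^{o(1)}$ admissible pairs $(a,b)$. Hence $\mathcal N\ll q^{o(1)}\#\{(s,Y):1\le|s|\le2N,\ |Y|\le16N^2,\ c_sY^2\bmod q\in[N^2,4N^2]\}$ with $c_s:=\overline{16s^2}$. For each fixed $s$ this is the number of $Y$ in an interval of length $\ll N^2$ at which the quadratic $c_sY^2$ takes a value lying (mod $q$) in an interval of length $\ll N^2$; since $N^2\ll q$, the quadratic-polynomial case of \cite[Theorem~5]{CCGHSZ} (already used in the proof of Lemma~\ref{lem:EqN-2}) bounds it by $\ll N^4q^{-1}+Nq^{o(1)}$. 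Summing over the $\ll N$ admissible $s$ gives $\mathcal N\ll q^{o(1)}(N^5q^{-1}+N^2q^{o(1)})$, whence $E_q(N)\ll N^5q^{-1+o(1)}+N^2q^{o(1)}$, which is slightly stronger than the claimed bound.

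The delicate point will be the bijective bookkeeping between quadruples $(u,v,x,y)$ and tuples $(s,Y,a,b)$: verifying that the extracted conditions are exactly equivalent to the original ones (in particular that the squared-twice congruence loses nothing for $s\ne0$), that $\{u^2\bmod q,v^2\bmod q\}=\{a,b\}$ and $\{x^2\bmod q,y^2\bmod q\}=\{c,d\}$ are forced, and that the residual sign and ordering ambiguities contribute only a bounded factor. The only input beyond the lemmas already in the paper is the estimate $\ll HK/q+H^{1/2}q^{o(1)}$ for the number of integers in an interval of length $H\le q$ at which a fixed quadratic with invertible leading coefficient takes a value lying (mod $q$) in an interval of length $K$; it is essential that the second term be $H^{1/2}q^{o(1)}$ rather than $Hq^{o(1)}$, since otherwise the tail would only be $N^3q^{o(1)}$.
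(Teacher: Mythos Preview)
Your proof is correct, but it follows a genuinely different path from the paper's.

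The paper squares the relation $u+v\equiv x+y$ twice as you do, but then combines the two squarings so as to control the \emph{full product} $u^2v^2x^2y^2$: from $8\,uvxy\equiv w\pmod q$ with $w=4u^2v^2+4x^2y^2-z^2=O(N^2)$ (where $z=x^2+y^2-u^2-v^2$), the integer $UVXY\le 256N^4$ (with $U,V,X,Y$ the reduced residues of $u^2,v^2,x^2,y^2$) is forced into $O(N^2)$ residue classes modulo~$q$, hence takes $O\bigl(N^2(N^4/q+1)\bigr)$ integer values, and a single divisor bound finishes. This yields $E_q(N)\le N^6q^{-1+o(1)}+N^2q^{o(1)}$ with nothing beyond elementary arithmetic and no case split.

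Your route instead isolates $uv$ via $4s\,uv\equiv Y$, obtaining the auxiliary congruence $Y^2\equiv 16s^2ab\pmod q$ in the small parameters $s,Y,a,b$, and then invokes the quadratic case of \cite[Theorem~5]{CCGHSZ} to count admissible $Y$'s for each fixed~$s$. This costs an external input (the same one already used in Lemma~\ref{lem:EqN-2}), but it buys a genuinely sharper first term $N^5q^{-1+o(1)}$ in place of $N^6q^{-1+o(1)}$. Two minor remarks: your $O(1)$-to-one bookkeeping is fine, since from $(s,Y,a,b)$ one recovers $\{c,d\}$ and then $(u,v,x,y)$ up to signs and orderings; and the opening case split at $q^{3/11}$ via Lemma~\ref{lem:EqN} is unnecessary, since your ``small~$N$'' argument only needs $4N^2<q$, while for $q^{1/2}/2\le N\le q^{1/2}$ the stated bound is already weaker than the trivial $E_q(N)\ll N^3$.
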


\begin{proof}  
Squaring both sides of the congruence $ u + v \equiv  x+y \pmod q$,  we obtain 
 \begin{equation} 
 \label{eq:congr2}
u^2 + 2 uv  +v^2  \equiv  x^2 +2xy+y^2 \pmod q.  
\end{equation}
We denote 
$$ 
 z := x^2 +y^2 - u^2 -v^2, 
$$
and write the  congruence~\eqref{eq:congr2} as 
$$
 2 (uv -xy)   \equiv  z \pmod q
$$
with $z \in [-6N, 6N]$.
We square it again, and obtain
$$
8 uvxy \equiv w \pmod q
$$
with 
$$
w =  4u^2 v^2 + 4x^2 y^2 -z^2 \in [-8 N^2, 8N^2].
$$
Since $u^2,v^2, x^2, y^2 \sim N$ and $z \in [-6N, 6N]$, we see that 
$$
w \in  [-4N^2, 32N^2].
$$
Thus the product $uvxy$ falls in $O(N^2)$ arithmetic progressions modulo $q$ 
and thus so does the product $UVXY \le 256 N^4$, where $U$, $V$, $X$ and $Y$ are the smallest 
positive residues modulo $q$ of 
$u^2$, $v^2$, $x^2$ and $y^2$, respectively. Thus the product   $UVXY$ can take at most $O\(N^2(N^4/q+1)\)$
possibilities, each of them leads to $q^{o(1)}$ possibilities for individual values $(U,V,X,Y)$ and thus for 
the initial variables $(u,v,x,y)$. 
\end{proof}

Note that for $N \le q^{1/4+o(1)}$,  Proposition~\ref{prop:EqN-small} implies an essentially optimal 
bound $E_q(N)  \le  N^{2+o(1)}$. 

\section{Some related sums}
\label{app:B}

\subsection{Type-I and Type-II sums} 
The sums $V_{a,q}(\balpha, \bbeta; h,M,N)$ with two nontrivial weights are usually   called {\it Type-II\/} sums.

However for some applications  sums with only one nontrivial weight, such as 
$$
V_{a,q}(\balpha; h,M,N) =   \sum_{m \sim M}  \sum_{n \sim N} \alpha_m   \sum_{\substack{x \in \Fq \\
x^2 = amn}} \eq(hx), 
$$
are  also important and are  usually   called {\it Type-I\/} sums.

Typically Type-I sums admit an easier treatment than Type-II sums, and with stronger bounds.  For  the sums
$V_{a,q}(\balpha, \bbeta; h,M,N)$, one can apply  some ideas 
of Blomer, Fouvry,  Kowalski,  Michel,  and Mili\'{c}evi\'{c}~\cite{BFKMM} with a follow up application of the 
Bombieri bound~\cite{Bom} for exponential sums along a curve. Unfortunately the resulting bound
\begin{equation}
 \label{eq: Bound V}
|V_{a,q}(\balpha; h,M,N)|  
\le  \sqrt{ \| \boldsymbol{\alpha} \|_1  \| \boldsymbol{\alpha} \|_2} M^{1/12} N^{7/12} q^{1/4+o(1)}, 
\end{equation} 
which also requires the additional conditions
\begin{equation} \label{type1cond}
MN \leq q^{3/2} \mand M \leq N^2,
\end{equation}
 does not   improve a combination of  Theorem~\ref{thm:Waq} and the bound
 $$
 |V_{a,q}(\balpha; h,M,N) | \le Mq^{1/2+o(1)},
 $$
 which can be obtained via the completion method exactly as in~\eqref{eq: Bound4}.
 Since the argument may have other applications we 
sketch it here with a brief outline of the main steps. We then give an short outline of further 
modifications which can achieved within this approach and to which the method of proof of  
 Theorem~\ref{thm:Waq} does not apply.   We also believe that Proposition~\ref{prop:sigma} below 
 is of independent interest and may have further applications. 

\subsection{Preliminary transformations} Let $K: \Fq \rightarrow \mathbb{C}$ be an arbitrary function with $|K(x)| \ll 1$, 
which is usually called the \textit{kernel}.
Consider the sum 
$$
\cS =\sum_{m\sim  M} \alpha_m \sum_{n \sim N} K(mn).
$$
Choose real parameters $A,B \geq 1$ such that 
\begin{equation} \label{restr}
AB \leq N, \quad \text{and} \quad  2AM<q.
\end{equation} 
As in~\cite[Equation~(5.8)]{BFKMM} we have 
\begin{equation}
\begin{split}
\label{ABsum}
AB \cS & \le 
\sqrt{ \| \boldsymbol{\alpha} \|_1  \| \boldsymbol{\alpha} \|_2 } (AN)^{3/4} q^{o(1)} \\
&\qquad \quad \times \( \sum_{r \in \Fq}  \sum_{1 \le s \leq 2AM} \left| \sum_{B<b \leq 2B} \eta(b) K \( s(r+b) \)  \right|^{4} \)^{1/4}, 
\end{split}
\end{equation}
where  $\eta(b)$ are complex numbers satisfying $| \eta(b)| \leq 1$, $b \sim B$. 
Expanding the fourth power in~\eqref{ABsum}, the innermost sum in the second factor becomes 
\begin{equation}
\label{eq:S4}
 \sum_{r \in \Fq}  \sum_{1 \le s \leq 2AM} \left| \sum_{B<b \leq 2B} \eta(b) K \( s(r+b) \)  \right|^{4} = \sum_{\boldsymbol{b} \in \mathcal{B}} \eta(\boldsymbol{b}) \Sigma(K,\boldsymbol{b}),
\end{equation}
where $\mathcal{B}$ denotes the set of quadruples $\boldsymbol{b}=(b_1,b_2,b_3,b_4)$ of integers satisfying $B<b_j \leq 2B$, $j=1,2,3,4$,  the coefficients $\eta(\boldsymbol{b})$
satisfy $| \eta(\boldsymbol{b})| \leq 1$ for all $\boldsymbol{b} \in \mathcal{B}$, and 
\begin{equation}
\begin{split} 
\label{sigmaKb}
\Sigma(K,\boldsymbol{b}):=\sum_{r \in \Fq} \sum_{1 \leq s \leq 2AM}  & K \(s(r+b_1) \)   K \(s(r+b_2) \) \\
& \quad \times \overline{K \(s(r+b_3) \) K \(s(r+b_4) \)}.
\end{split}
\end{equation}

Let $\mathcal{B}^{\Delta}$ be the subset of $\boldsymbol{b}$ admitting a subset of two entries matching the entries of the complement (for instance, such as $b_1=b_2$ and $b_3=b_4$ or 
$b_1=b_3$ and $b_2=b_4$). For such tuples $\boldsymbol{b}$ we use the trivial bound 
\begin{equation} \label{trivbd}
\sum_{\boldsymbol{b} \in \mathcal{B}^{\Delta}} |\Sigma(K,\boldsymbol{b})| \ll AB^2M q.
\end{equation} 

For  $\boldsymbol{b} \not \in \mathcal{B}^{\Delta}$, we complete the sum over $s$ in~\eqref{sigmaKb} using additive characters, see~\cite[Section~12.2]{IwKow}   and derive, similarly to~\eqref{eq: Compl}, 
\begin{equation}
\label{eq:Sigmas}
\Sigma(K,\boldsymbol{b}) \ll \log q \max_{t \in \Fq}  | \Sigma(K,\boldsymbol{b},t)|,
\end{equation}
where
\begin{equation}
\label{Kbh}
\begin{split}
\Sigma(K,\boldsymbol{b},t):=\sum_{r,s \in \Fq} &  \e_q (st) K \(s(r+b_1) \) K \(s(r+b_2) \) \\
& \qquad \quad \times \overline{K \(s(r+b_3) \) K \(s(r+b_4) \)}.
\end{split}
\end{equation}

\subsection{Reduction to exponential sums along a curve}
Now consider the kernel of our interest: 
\begin{equation} \label{kernel}
K(x):=\sum_{\substack{ u \in \Fq \\ u^2=ax}} \e_q(hu).
\end{equation}

\begin{prop} \label{prop:sigma} 
For all $t \in \Fq$ and all $\boldsymbol{b} \not \in \mathcal{B}^{\Delta}$ above we have
$$
\Sigma(K,\boldsymbol{b},t) \ll q.
$$
\end{prop}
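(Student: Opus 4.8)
\emph{Proof proposal (plan).}
Recalling~\eqref{kernel} and~\eqref{Kbh}, the plan is to carry out the sums over $s$ and over $r$, reducing $\Sigma(K,\boldsymbol b,t)$ to a quadratic‑character‑twisted exponential sum along a curve of bounded genus, and then to invoke Bombieri's bound. First I would use that the two square roots of $ax$ in~\eqref{kernel} are negatives of one another, so $K$ is real‑valued, $\overline{K(x)}=K(x)$, and $\Sigma(K,\boldsymbol b,t)=\sum_{r,s\in\Fq}\e_q(st)\prod_{j=1}^{4}K(s(r+b_j))$. Writing each factor as $K(s(r+b_j))=\sum_{u_j^2=as(r+b_j)}\e_q(hu_j)$, interchanging summations and summing over $s$ first, the terms with $s=0$ and those in the at most four slices $r=-b_j$ contribute $O(q)$ and are discarded; in the rest, $s$ is pinned down by the conditions $u_j^2=as(r+b_j)$, which force the radial relations $u_j^2(r+b_1)=u_1^2(r+b_j)$, $j=2,3,4$, so all $u_j\ne 0$.

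The locus cut out by these relations is ruled by the lines $u_j=\rho_j u_1$ with $\rho_1:=1$ and $\rho_j^2(r+b_1)=r+b_j$; along such a line $s=\overline{a(r+b_1)}\,u_1^2$, and the inner sum over $u_1$ is the quadratic Gauss sum $\cG_q\(t\,\overline{a(r+b_1)},\ hL(\boldsymbol\rho)\)$ with $L(\boldsymbol\rho):=\rho_1+\rho_2+\rho_3+\rho_4$. By~\eqref{eval}, for $t\ne 0$ this equals a fixed multiple of $\sqrt q$ times $\(\frac{r+b_1}{q}\)\e_q\(-\overline{4t}\,ah^2(r+b_1)L(\boldsymbol\rho)^2\)$, whereas for $t=0$ it equals $q\cdot\mathbf{1}[L(\boldsymbol\rho)=0]$. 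Summing over the lines and over $r$, I would thus obtain, up to an admissible $O(q)$, that $\Sigma(K,\boldsymbol b,t)$ equals a bounded multiple of $\sqrt q$ times the mixed character sum $\sum_{(r,\boldsymbol\rho)\in\mathcal C(\Fq)}\(\frac{r+b_1}{q}\)\e_q\(-\overline{4t}\,ah^2(r+b_1)L(\boldsymbol\rho)^2\)$ when $t\ne 0$, and equals $q\cdot\#\{(r,\boldsymbol\rho)\in\mathcal C(\Fq):L(\boldsymbol\rho)=0\}$ when $t=0$; here $\mathcal C$ is the curve $\{(r,\rho_2,\rho_3,\rho_4):\rho_j^2(r+b_1)=r+b_j,\ j=2,3,4\}$, i.e.\ the fibre product over $\mathbb A^1_r$ of the three conics $\rho_j^2(r+b_1)=r+b_j$, of bounded degree and genus.

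For $t\ne 0$ I would apply Bombieri's bound~\cite{Bom} to the mixed character sum: the associated rank‑one sheaf is geometrically non‑trivial on every component of $\mathcal C$ — indeed $r+b_1$ is never a square in $\overline{\Fq}(\mathcal C)$, so the quadratic‑character twist alone already precludes triviality — whence the sum is $\ll q^{1/2}$ and $\Sigma(K,\boldsymbol b,t)\ll\sqrt q\cdot q^{1/2}+q\ll q$ for every $\boldsymbol b$. For $t=0$ it remains to bound $\#\{(r,\boldsymbol\rho)\in\mathcal C(\Fq):L(\boldsymbol\rho)=0\}$: this is $O(1)$ — hence $\Sigma(K,\boldsymbol b,0)\ll q$ — exactly when $L=\rho_1+\rho_2+\rho_3+\rho_4$ is non‑constant on every geometrically irreducible component of $\mathcal C$, and that non‑degeneracy is precisely equivalent to $\boldsymbol b\notin\mathcal B^{\Delta}$. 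When the $b_j$ are pairwise distinct, $\mathcal C$ is geometrically irreducible and $1,\rho_2,\rho_3,\rho_4$ are linearly independent in $\overline{\Fq}(\mathcal C)$, so $L$ is non‑constant; when some $b_j$ coincide, $\mathcal C$ breaks into components on which the corresponding $\rho$'s become equal up to sign, and $L$ restricts to a constant on such a component only if those identifications cancel it, which forces $\{b_1,b_2,b_3,b_4\}$ to split into two equal pairs, i.e.\ $\boldsymbol b\in\mathcal B^{\Delta}$ — and for such $\boldsymbol b$ that bad component makes the count of size $\gg q$ and $\Sigma(K,\boldsymbol b,0)\gg q^2$, so the hypothesis is genuinely needed.

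The main obstacle I foresee is exactly this component‑by‑component analysis: one must enumerate the degenerate configurations where $\mathcal C$ is reducible (some $b_j$ agreeing) and check, for each component, both the non‑constancy of $L$ (the case $t=0$) and the geometric non‑triviality of the twisted sheaf (the case $t\ne 0$), with the assumption $\boldsymbol b\notin\mathcal B^{\Delta}$ entering in the first of these. The surrounding steps — the $O(q)$ bookkeeping for $s=0$ and $r=-b_j$, the bounded constant factors, the distinction between $\Fq$‑rational and Galois‑conjugate lines in the ruling, and the invocation of Bombieri's theorem itself — are routine.
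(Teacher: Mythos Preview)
Your proposal is correct and follows essentially the same route as the paper: substitute the kernel, parametrise the square-root variables radially (your $\rho_j$ are the paper's $x,y,z$), evaluate the inner Gauss sum, and reduce to a character sum along the genus-bounded curve $\mathcal C$, with the $t=0$ case handled by showing $L=0$ has $O(1)$ points when $\boldsymbol b\notin\mathcal B^\Delta$.

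The one genuine methodological difference is how the $t\ne 0$ case is finished. You keep the \emph{mixed} sum
\[
\sum_{(r,\boldsymbol\rho)\in\mathcal C(\Fq)}\(\frac{r+b_1}{q}\)\e_q\bigl(\cdots\bigr)
\]
and argue that the quadratic-character factor is already geometrically non-trivial because $r+b_1$ is not a square in $\overline{\Fq}(\mathcal C)$; this is correct and arguably cleaner than the paper's non-constancy verification, which it calls ``elementary but somewhat tedious''. The paper instead absorbs the Legendre symbol into the curve by the standard trick $\sum_x(\tfrac{x}{q})f(x)=2\sum_w f(w^2)-\sum_x f(x)$, thereby passing to two \emph{pure} additive exponential sums $\Sigma_1,\Sigma_2$ over the curves $\cU_{\boldsymbol b}$ and $\cW_{\boldsymbol b,t}$ (your $\mathcal C$ and its double cover by $\sqrt{r+b_1}$, after the change of variables $u=1/(r+b_1)$). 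The payoff of the paper's detour is that \cite[Theorem~6]{Bom} as cited is a bound for purely additive sums along a curve; if you stay with the mixed sum you should cite a result that covers multiplicative--additive twists (e.g.\ Weil's bound for character sums on curves, or Deligne), rather than Bombieri directly. Apart from this citation issue, the two arguments are equivalent.
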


\begin{proof} Since  $\boldsymbol{b} \not \in \mathcal{B}^{\Delta}$ there is a least one value among $b_1, b_2, b_3, b_4$ 
which is not repeated among other values. Without loss of generality we can assume 
that 
\begin{equation} \label{eq:b1 unique}
b_1 \ne b_2,b_3,b_4.
\end{equation}

Substituting~\eqref{kernel} into~\eqref{Kbh} we obtain
\begin{equation} \label{sigmasub}
\Sigma(K,\boldsymbol{b},t)= \sum_{r,s \in \Fq} \, \sum_{(u_1,v_1,u_2,v_2) \in \cZ_{\boldsymbol{b},r,s} }  \e_q(u_1+v_1-u_2-v_2+st), 
\end{equation}
where $\cZ_{\boldsymbol{b},r,s} $ is the set of solutions $(u_1,v_1,u_2,v_2)\in \Fq^4$ to 
\begin{equation}
\begin{split} \label{Weqn}
 u_1^2=s(r+b_1), & \quad v_1^2 =s(r+b_2), \\
 u_2^2=s(r+b_3), & \quad v_2^2=s(r+b_4). 
 \end{split}
 \end{equation}
When $r=-b_1$ and $s \in \Fq$, then $\# W_{\boldsymbol{b},r,s}  =O(1)$, and so the contribution to~\eqref{sigmasub} is $O(q)$. 

So now we can assume that $r \neq -b_1$. We see that if $u_1=0$, 
then $s = 0$ and 
so also  $v_1=u_2 = v_2 = 0$. Eliminating $s$ from~\eqref{Weqn}, 
and writing 
$$
(u_1,v_1,u_2,v_2) = (w, wx, wy, wz), 
$$
we see that~\eqref{sigmasub} becomes 
\begin{equation}
\label{eq:Sigma}
\Sigma(K,\boldsymbol{b},t)= \Sigma^*(K,\boldsymbol{b},t)
+ O(q), 
\end{equation}
 where 
$$
 \Sigma^*(K,\boldsymbol{b},t):=\sum_{(r,x,y,z) \in \cV^*_{\boldsymbol{b}} } \sum_{w \in \Fq} 
\e_q\(w(1+x-y-z)+ \overline{(r+b_1)} t w^2\), 
$$
 and the sum is taken over the set  $\cV^*_{\boldsymbol{b}} $ of solutions $(r,x,y,z)\in \Fq^4$  with $r \neq -b_1$ to 
$$
(r+b_1)x^2=(r+b_2),  \quad  (r+b_1) y^2=(r+b_3), \quad (r+b_1)z^2=(r+b_4).
$$ 
We now  change the variable  $r  \to \overline{u}  - b_1$, $u \in\Fq^\times $,  and write 
\begin{equation}
\label{eq:Sigma*}
 \Sigma^*(K,\boldsymbol{b},t)=\sum_{(u,x,y,z) \in \cU^*_{\boldsymbol{b}} } \sum_{w \in \Fq} 
\e_q\(w(1+x-y-z)+   t u w^2\), 
\end{equation} 
where  $\cU^*_{\boldsymbol{b}} $ consists of solutions $(u,x,y,z)\in \Fq^4$  with $u \neq 0$ to 
\begin{equation}
\label{eq:U*}
x^2= 1 + c_1 u ,  \quad    y^2= 1 + c_2 u, \quad  z^2= 1 + c_3 u,
\end{equation} 
where 
$$
c_i = b_{i+1} - b_1, \qquad  i=1,2,3.
$$
Observe that by~\eqref{eq:b1 unique} we have
$$
c_i  \ne 0, \qquad  i=1,2,3.
$$

\subsubsection*{The case $t \in \mathbb{F}^{\times}_q$}
Recalling the definition of the Gauss sums~\eqref{gausseval}
we write 
\begin{equation} \label{eq:Sigma G}
 \Sigma^*(K,\boldsymbol{b},t)=\sum_{(u,x,y,z) \in \cU^*_{\boldsymbol{b}}} \,  \cG_q(t u,1+x -y -z).\end{equation}
Evaluating the Gauss sums as in~\eqref{eval} we see that~\eqref{eq:Sigma G} becomes 
$$
 \Sigma^*(K,\boldsymbol{b},t)=\varepsilon_{q} \sqrt{q} \,  
 \sum_{(u,x,y,z) \in \cU^*_{\boldsymbol{b}} }  \( \frac{t u}{q} \)   \e_q \( - \overline{4 tu}  (1+ x- y - z)^2 \), 
$$
where we can now extend $\cU^*_{\boldsymbol{b}}$ to  the set $\cU_{\boldsymbol{b}}$ which also allows 
the value $u=0$ (this value is eliminated automatically by the pole in the $\overline{4 tu}$ term). 

Let the set $\cU_{\boldsymbol{b}}$ be the same as $\cU^*_{\boldsymbol{b}}$ in  which we also allow 
the value $u=0$ (this value is eliminated automatically by the pole in the $\overline{4 tu}$ term),    
and let  $\cW_{\boldsymbol{b},t}$ be the set of solutions $(w,x,y,z) \in \mathbb{F}_q^4$ to 
$$
x^2=1+ c_1 \overline{4t} w^2,    \quad y^2=1+ c_2 \overline{4t} w^2, \quad 
z^2=1+ c_3 \overline{4t} w^2.
$$
Recalling the definition of  the Legendre symbol,  we represent the mixed sum $\Sigma^*(K,\boldsymbol{b},t)$ with multiplicative and additive characters as a linear 
combination  of two pure exponential sums with rational functions
\begin{equation}
\label{eq:Sigma12}
 \Sigma^*(K,\boldsymbol{b},t):=2 \Sigma_2 (K,\boldsymbol{b},t)-\Sigma_1(K,\boldsymbol{b},t), 
\end{equation}
where
\begin{align*}
\Sigma_1(K,\boldsymbol{b},t)&:=\varepsilon_{q} \sqrt{q}   \sum_{(u,x,y,z) \in \cU_{\boldsymbol{b}} }\e_q \( -\overline{4tu}(1+ x - y - z)^2 \),\\
\Sigma_2(K,\boldsymbol{b},t)&:=\varepsilon_{q} \sqrt{q} \,   
\sum_{(w,x,y,z) \in \cW_{\boldsymbol{b},t} }\e_q \( -\overline{w}^2(1+ x - y -z)^2 \).
\end{align*}

A simple argument using the Weil bound on character sums (see, for example,~\cite[Theorem~11.23]{IwKow}) shows that each of the varieties $\cW_{\boldsymbol{b},t}$ and $\cU_{\boldsymbol{b}}$
has $A(c_1,c_2,c_3)q+ O(q^{1/2})$ rational points over $\Fq$
where $A(c_1,c_2,c_3)=1$ if all $c_i$ distinct, $2$ if exactly two of the $c_i$ are equal and 
 $4$ if all the $c_i$ are equal.
 Thus the variety is 
of dimension 1 by the Lang--Weil theorem (that is, an algebraic curve over $\mathbb{F}_q$).
Elementary but somewhat tedious calculations show that the function 
$\overline{4tu}(1+ x - y - z)^2$ is not constant on  $\cU_{\boldsymbol{b}}$ and  the  function $\overline{w}^2(1+ x -y -z)^2$ is not 
 constant on  $\cW_{\boldsymbol{b},t}$.

Therefore the bound of Bombieri~\cite[Theorem~6]{Bom}) applies to  both sums and yields 
$$
\Sigma_{1,2}(K,\boldsymbol{b},t) \ll q.
$$ 
Now using  this bound in~\eqref{eq:Sigma12} and recalling~\eqref{eq:Sigma}, we see that
\begin{equation}
\label{eq:Sigma t}
\Sigma(K,\boldsymbol{b},t) \ll q, \qquad t \in\Fq^\times .
\end{equation}

\subsubsection*{The case $t=0$}
In this case, we see from~\eqref{eq:Sigma*} that
$$
 \Sigma^*(K,\boldsymbol{b},0)=\sum_{(u,x,y,z) \in \cU^*_{\boldsymbol{b}} } \sum_{w \in \Fq} 
\e_q\(w(1+x-y-z)\) = q T,
$$
where $T$ is the number of solutions $(u,x,y,z) \in \mathbb{F}^{4}_q$ to~\eqref{eq:U*} with $1+x= y+z$.  Direct elimination of variables 
shows that $T = O(1)$
and we obtain 
\begin{equation}
\label{eq:Sigma 0}
\Sigma(K,\boldsymbol{b},0) \ll q.
\end{equation}

Combining~\eqref{eq:Sigma t} and~\eqref{eq:Sigma 0}  we  derive the desired result. 
\end{proof}

\subsection{Concluding the argument}
Substituting the bound of   Proposition~\ref{prop:sigma}  in~\eqref{eq:Sigmas} we obtain 
$$
\Sigma(K,\boldsymbol{b}) \ll q \log q, 
$$
with the contribution from $\boldsymbol{b} \in  \mathcal{B}^{\Delta}$ to  in~\eqref{eq:S4}, bounded as~\eqref{trivbd},
we see that~\eqref{ABsum}  becomes 
\begin{align*}
V_{a,q}(\balpha&;  h,M,N)\\
&  \le  (AB)^{-1}  \sqrt{ \| \boldsymbol{\alpha} \|_1  \| \boldsymbol{\alpha} \|_2 } (AN)^{3/4} \left( AB^2 Mq+B^4 q  \right)^{1/4}    q^{o(1)}.
\end{align*}
We now choose
$$
A=\frac{1}{2} M^{-1/3} N^{2/3} \quad \text{and} \quad B=(MN)^{1/3}
$$
to balance the above estimate, and note that the conditions~\eqref{type1cond} imply $A,B \geq 1$ as well as~\eqref{restr},
which implies~\eqref{eq: Bound V}. 

\subsection{Further possibilities}
One of the obvious ways to try to improve the bound~\eqref{eq: Bound V} is to use higher powers as in~\cite{KMS2}. 
However studying exponential sums over more general  higher dimensional varieties can be quite challenging. 

This however leads to some further  possibilities where the above method can be more competitive. One of them  is an extension where the summation over $m$ 
in the sum $V_{a,q}(\balpha, \bbeta; h,M,N)$ from a dyadic interval $m \sim M$ to an arbitrary set $m \in \cM$
with $\cM \subseteq \Fq$. 
More precisely, the method of~\cite{KMS2} rests on bounds for the second moment of the quantity  
$$
\nu(r,s) =  \sumthree_{\substack{ a \sim A, \ m  \sim M, \ n  \in [0, 6N]\\  
am=s, ~\overline{a}n  \equiv r \pmod q}}  |\alpha_m|   .
$$ 
It has been shown in~\cite{BaSh} that one can obtain good bounds on this quantity even if $m$ runs through 
an arbitrary set  $\cM \subseteq \Fq$. 

\section{Correlation between Sali\'{e} sums}
\label{app:C}
The identity~\eqref{eq:S eval} links Sali\'{e} sums to sums over modular square roots and 
plays an important role in the proof of~\cite[Theorem~1.2]{DuZa}. 
Using our argument, we now are able to obtain the following improvement of the bound of~\cite[Theorem~1.2]{DuZa}
 on  sums of  Sali\'{e} sums~\eqref{eq:Sal}.  
 
 \begin{prop} \label{prop:Sal} For any positive integers $M,N\le q$  and any integer $a$ with $\gcd(a,q)=1$, 
we have 
\begin{align*}
 \sum_{n_1,n_2 \sim N}&  \left|  \sum_{m \sim M}   S(m, an_1;q)  S(m,an_2;q)\right| \\
 &   \le \begin{cases} q^{5/4+o(1)}N\(M^{7/8}q^{-1/8} + M^{7/12}\)  \(N^{7/8}q^{-1/8} + N^{7/12} \);\\
 q^{5/4+o(1)} N \(Mq^{-1/4} + M^{5/8}\)  \(Nq^{-1/4} + N^{5/8} \).
 \end{cases} 
\end{align*}
\end{prop}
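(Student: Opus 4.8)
The plan is to use the Sali\'{e} evaluation~\eqref{eq:S eval} to rewrite the correlation sum in terms of the quantity $\sum_{\lambda}|A_{h,\lambda,a}|\,Q_{\lambda,1}(\bbeta)$ that is bounded inside the proof of Theorem~\ref{thm:Waq}, the role played there by Cauchy--Schwarz being taken over here by a positivity property. Writing $g(m,n):=\sum_{x^2=amn}\e_q(2x)$, which is real since $\{x:x^2=amn\}$ is stable under $x\mapsto -x$, we have $S(m,an_i;q)=\sqrt q\,\varepsilon_q(an_i/q)\,g(m,n_i)$ by~\eqref{eq:S eval}, so that
\[
\sum_{m\sim M}S(m,an_1;q)S(m,an_2;q)=q\,\varepsilon_q^{2}\Bigl(\tfrac{n_1n_2}{q}\Bigr)\sum_{m\sim M}g(m,n_1)g(m,n_2).
\]
Since $g(m,n)$ vanishes unless $(am/q)=(n/q)$, the inner sum vanishes unless $(n_1/q)=(n_2/q)=:j$, and then $m$ runs effectively over those $m\sim M$ with $(am/q)=j$; as $|\varepsilon_q^{2}|=1$, the left-hand side of Proposition~\ref{prop:Sal} equals $q(\Sigma_1+\Sigma_{-1})$, where
\[
\Sigma_j:=\sum_{\substack{n_1,n_2\sim N\\ (n_1/q)=(n_2/q)=j}}|T(n_1,n_2)|,\qquad T(n_1,n_2):=\sum_{\substack{m\sim M\\ (am/q)=j}}g(m,n_1)g(m,n_2).
\]
The two cases are handled identically, the case $j=-1$ requiring only the standard twist by a fixed nonresidue exactly as for $R_{-1}$ in the proof of Theorem~\ref{thm:Waq}; so I focus on $\Sigma_1$.

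Next I would factor solutions of $x^2=amn_i$ as $x=tu_i$ with $t^2=am$ and $u_i^2=n_i$ --- the step used to pass from $R_1$ to $A_{h,\lambda,a}Q_{\lambda,1}(\bbeta)$ in the proof of Theorem~\ref{thm:Waq} --- which gives, for $(am/q)=(n_1/q)=(n_2/q)=1$,
\[
g(m,n_1)g(m,n_2)=\frac12\sum_{t^2=am}\ \sum_{u^2=n_1}\sum_{v^2=n_2}\e_q\bigl(2t(u+v)\bigr);
\]
collecting by $\lambda=u+v$ yields $T(n_1,n_2)=\tfrac12\sum_{\lambda\in\Fq}A_{2,\lambda,a}\,Q^{+}_{\lambda}(n_1,n_2)$, where $A_{h,\lambda,a}=\sum_{m\sim M}\sum_{t^2=am}\e_q(ht\lambda)$ is as in the proof of Theorem~\ref{thm:Waq} and $Q^{+}_{\lambda}(n_1,n_2):=\#\{(u,v):u^2=n_1,\,v^2=n_2,\,u+v=\lambda\}$. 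The key point is that $Q^{+}_{\lambda}(n_1,n_2)\ge 0$, so that
\[
\Sigma_1\le\frac12\sum_{\lambda\in\Fq}|A_{2,\lambda,a}|\sum_{\substack{n_1,n_2\sim N\\(n_1/q)=(n_2/q)=1}}Q^{+}_{\lambda}(n_1,n_2)=\frac12\sum_{\lambda\in\Fq}|A_{2,\lambda,a}|\,Q_{\lambda,1}\bigl(\mathbf{1}_{[N,2N]}\bigr),
\]
the last identity being the substitution $v\mapsto -v$ (each $n_i=u_i^2$ being automatically a square). Thus $\Sigma_1$ is bounded by the very quantity estimated inside the proof of Theorem~\ref{thm:Waq} with the weight $\bbeta=\mathbf{1}_{[N,2N]}$, and the positivity of $Q^{+}_{\lambda}$ is precisely what lets us dispense with the Cauchy--Schwarz step there and the attendant factor $\|\balpha\|_2$.

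It then remains to run the H\"{o}lder estimate from the proof of Theorem~\ref{thm:Waq}: applying H\"{o}lder with exponents $(4,4,2)$ to the factors $|A_{2,\lambda,a}|$, $Q_{\lambda,1}(\mathbf{1}_{[N,2N]})^{1/2}$, $Q_{\lambda,1}(\mathbf{1}_{[N,2N]})^{1/2}$ gives
\[
\Sigma_1^{4}\ll\Bigl(\sum_{\lambda\in\Fq}|A_{2,\lambda,a}|^{4}\Bigr)\Bigl(\sum_{\lambda\in\Fq}Q_{\lambda,1}(\mathbf{1}_{[N,2N]})^{2}\Bigr)\Bigl(\sum_{\lambda\in\Fq}Q_{\lambda,1}(\mathbf{1}_{[N,2N]})\Bigr)^{2},
\]
and then one invokes $\sum_{\lambda}|A_{2,\lambda,a}|^{4}\ll q\,E_{q,b}(\mathbf{1}_{[M,2M]})$ from~\eqref{eq:A4} (with $ab\equiv1\bmod q$), $\sum_{\lambda}Q_{\lambda,1}(\mathbf{1}_{[N,2N]})^{2}=E_{q,1}(\mathbf{1}_{[N,2N]})$ from~\eqref{eq:E Q2}, and $\sum_{\lambda}Q_{\lambda,1}(\mathbf{1}_{[N,2N]})\ll N^{2}$ from~\eqref{eq:betaell1}. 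Substituting the bounds $E_{q,b}(\mathbf{1}_{[M,2M]})\ll(M^{7/2}q^{-1/2}+M^{7/3})M^{o(1)}$ and $E_{q,1}(\mathbf{1}_{[N,2N]})\ll(N^{7/2}q^{-1/2}+N^{7/3})N^{o(1)}$ coming from Lemma~\ref{lem:EqN} with $\|\mathbf{1}_{[M,2M]}\|_\infty=1$ and $\|\mathbf{1}_{[M,2M]}\|_1\asymp M$, then taking fourth roots and multiplying by $q$ yields the first bound of Proposition~\ref{prop:Sal}; replacing Lemma~\ref{lem:EqN} by Lemma~\ref{lem:EqN-2}, so that $E_{q,b}(\mathbf{1}_{[M,2M]})\ll M^{4}/q+M^{5/2}q^{o(1)}$ and similarly in $N$, yields the second. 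Throughout one may assume $M,N\le q/2$, the remaining ranges being trivial.

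There is no essential obstacle here: granting the identity $\sum_{n_1,n_2}Q^{+}_{\lambda}(n_1,n_2)=Q_{\lambda,1}(\mathbf{1}_{[N,2N]})$, the proof is a bookkeeping rerun of that of Theorem~\ref{thm:Waq}. The points that need care are the dyadic normalisations, the combinatorial factors arising in the factorisation $x=tu_i$ (responsible for the harmless constant $\tfrac12$), the observation that $g(m,n)$ is real so that no conjugates intervene, and the $j=-1$ case.
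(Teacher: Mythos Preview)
Your proposal is correct and follows essentially the same route as the paper's proof. The paper phrases the reduction as ``$\text{LHS}\le q(\widetilde R_1+\widetilde R_{-1})$ where the $\widetilde R_j$ are as in~\eqref{eq:WRR} but with two-dimensional weights $\widetilde\beta_{n_1,n_2}$'' (the complex signs of $T(n_1,n_2)$), whereas you invoke the nonnegativity of $Q^{+}_{\lambda}(n_1,n_2)$ to pull the absolute value inside; both devices collapse immediately to $|Q^{+}_{\lambda}|\le Q_{\lambda,1}(\mathbf 1_{[N,2N]})$, after which the H\"older/energy argument from~\eqref{eq:RQ2} and Lemmas~\ref{lem:EqN}--\ref{lem:EqN-2} is identical and yields the stated bounds.
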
 

\begin{proof} From~\cite[Equations~(8.1) and~(8.2)]{DuZa} we infer that 
$$
\sum_{n_1,n_2 \sim N} \left|  \sum_{m  \sim M}   S(m, an_1;q)  S(m, an_2;q)\right|  \le 
q(\widetilde R_1+\widetilde  R_{-1}).
$$
Here, the $\widetilde R_j$ (with $j = \pm 1$) are as in~\eqref{eq:WRR}, with the only difference being the presence of two-dimensional weights
 $\widetilde  \beta_{n_1,n_2}$ instead of products of two one dimensional weights  $\beta_{n_1} \overline{ \beta}_{n_2}$. This is inconsequential for 
the  argument. In particular,  we have a full analogue of the bound~\eqref{eq:R-fin} and its version using  Lemma~\ref{lem:EqN-2}
 from which we derive the desired result. 
\end{proof}

For example, for $M,N \le q^{2/3}$, the second bound of Proposition~\ref{prop:Sal}  simplifies as
$$
 \sum_{n_1,n_2 \sim N}  \left|  \sum_{m \sim M}   S(m, an_1;q)  S(m,an_2;q)\right| \le MN^2q^{1+o(1)} \(\frac{q^2}{M^3N^3}\)^{1/8}, 
$$
which improves the trivial bound $MN^2q$ whenever $MN \ge q^{2/3+\varepsilon}$ for any fixed $\varepsilon > 0$.
While for $M,N \le q^{3/7}$, the first bound yields
$$
 \sum_{n_1,n_2 \sim N}  \left|  \sum_{m \sim M}   S(m, an_1;q)  S(m,an_2;q)\right| \le MN^2q^{1+o(1)} \(\frac{q^3}{M^5N^5}\)^{1/12}, 
$$
which improves the trivial bound $MN^2q$ whenever $MN \ge q^{3/5+\varepsilon}$ for any fixed $\varepsilon > 0$.

\section*{Acknowledgement} 
The authors thank the anonymous referee for their meticulous comments on the manuscript.
The authors are also very grateful to Bruce Berndt, Moubariz Garaev, Paul Pollack, George Shakan and Asif Zaman 
for their comments on a preliminary version of the manuscript. The authors also thank Paul Pollack for the information 
about the work of Benli~\cite{Ben}.

The work of A.D. was supported on a UIUC
Campus research board grant. 
 The work of  B.K. was  supported  by the Academy of Finland Grant~319180. 
 The work of I.S. was supported in part  by the Australian Research Council Grant~DP170100786.


\begin{thebibliography}{99}
 
 \bibitem{BGHBS}  W.~D.~Banks, M.~Z.~Garaev,
D.~R.~Heath-Brown and I.~E.~Shparlinski, `Density of non-residues
in Burgess-type intervals and applications', {\it Bull. Lond. Math. Soc.\/}, 
{\bf 40} (2008), 88--96. 
        
 \bibitem{BaSh} W. D.  Banks and  I. E. Shparlinski, 
`Congruences with intervals and arbitrary sets',
{\it Archiv Math.\/},  (to appear). 
 
 \bibitem {Ben} K. Benli,  `Small prime {$k$}th power residues',
 {\it Proc. Amer. Math. Soc.\/}, (to appear).  
  
 \bibitem{BePo} K. Benli and P. Pollack, `Small prime {$k$}th power residues for {$k=2,3,4$}: a reciprocity
   laws approach',  {\it Proc. Amer. Math. Soc.\/}, {\bf  147} (2019), 987--994. 

\bibitem{BEW} B.C. Berndt, R. J. Evans and K. Williams, Gauss and Jacobi sums, Canadian Mathematical Society Series of Monographs and Advanced Texts, John Wiley \& Sons, Inc., New York, 1998. A Wiley-Intersci. Pub..  

\bibitem{BHM}
U. Betke, M. Henk and J. M. Wills, `Successive-minima-type inequalities',
{\it Discr. Comput. Geom.\/}, {\bf 9},
(1993), 165--175.

\bibitem{BFKMM} V. Blomer, {\'E}. Fouvry, E. Kowalski, P. Michel and D. Mili{\'c}evi{\'c},
 `On moments of twisted $L$-functions',
 {\it Amer. J.  Math.\/}, {\bf 139} (2017), 707--768.
 
 \bibitem{Bom} E. Bombieri, `On exponential sums in finite fields', 
 {\it Amer. J. Math.\/}, {\bf 88}, (1966),  71--105.
 
\bibitem{BGKS} J.~Bourgain, M.~Z.~Garaev, S. V. Konyagin
and I. E. Shparlinski, `On congruences with products of variables
from short intervals and  applications',
 {\it Proc. Steklov Math. Inst.\/}, {\bf 280} (2013),  67--96. 
 
 \bibitem{BoLi} J. Bourgain and E. Lindenstrauss, `Entropy of quantum limits',
{\it Comm. Math. Phys.\/}, {\bf 233} (2003), 153--171. 
 
 \bibitem{Bur} D. A. Burgess,  `The distribution of quadratic residues and non-residues',
 {\it Mathematika\/}, {\bf 4}  (1957), 106--112. 
 
\bibitem{CCGHSZ} M.--C. Chang, J. Cilleruelo, M. Z. Garaev, J.  Hern\'andez,
I. E. Shparlinski and A. Zumalac\'{a}rregui,
`Points on curves in small boxes and applications',
{\it Michigan Math. J.\/}, {\bf 63} (2014), 503--534. 

\bibitem{CI}
J. B. Conrey and H. Iwaniec, 
`The cubic moment of central values of automorphic $L$-functions',
{\it Ann. of Math.\/},   {\bf 151} (2000),  1175--1216.

\bibitem{Dav}
H.~Davenport,
\textit{Multiplicative number theory}, 2nd edition,  Springer-Verlag, New York-Berlin, 1980.  

\bibitem{DrTi} M.\ Drmota and R.\ Tichy,
{\it Sequences, discrepancies and applications\/},
Springer-Verlag, Berlin, 1997.

\bibitem{Duk} W. Duke,  `Hyperbolic distribution problems and half-integral weight Maass forms',
 {\it Invent. Math.\/}, {\bf 92}  (1988), 73--90.

\bibitem{DFI1} 
W. Duke, J. Friedlander and H. Iwaniec, `Equidistribution of roots of a quadratic congruence to prime moduli', {\it Ann. of Math.\/}, {\bf 141} (1995),   423--441. 

\bibitem{DFI2} 
W. Duke, J. Friedlander and H. Iwaniec, `Weyl sums for quadratic roots', 
{\it  Int. Math. Res. Not.\/}, {\bf 2012} (2012), 2493--2549. 

\bibitem{DuZa} A. Dunn and A. Zaharescu, 
`The twisted second moment of modular half integral weight  $L$-functions', 
{\it Preprint\/}, 2019,   \url{http://arxiv.org/abs/1903. 03416}.

\bibitem{EV} J. Ellenberg and A. Venkatesh,  
`Reflection principles and bounds for class group torsion',  
{\it Int. Math. Res. Not. IMRN\/}, {\bf 2007} (2007), Art. ID rnm002. 


\bibitem{Ell} P. D. T. A. Elliott,  `The least prime {$k$-th}-power residue', 
{\it J. London Math. Soc.\/}, {\bf 3}  (1971), 205--210. 

\bibitem{EOS}
P. Erd{\H o}s, A. M. Odlyzko and A. S{\'a}rk{\"o}zy,
`On the residues of products of prime numbers',
{\it Period. Math. Hung.\/}, {\bf 18} (1987), 229--239.

\bibitem{Fog} E. Fogels,  `\"{U}ber die  Ausnahmenullstelle der  Heckeschen
  $L$-Funktionen', {\it Acta Arith.\/}, {\bf 8}  (1962/1963), 307--309. 

\bibitem{Fouv} {\'E}. Fouvry,
`Sur le probl{\'e}me des diviseurs de Titchmarsh',
{\it J. Reine Angew. Math.\/},  {\bf 357} (1985),  51--76.

\bibitem{FKM} 
{\'E}. Fouvry,  E. Kowalski and  P.  Michel,  `Algebraic trace functions over the primes',
{\it Duke Math. J.\/}, {\bf 163} (2014),  1683--1736.

\bibitem{FI}
J.~B.~Friedlander and H.~Iwaniec, `What is {$\dots$} the parity phenomenon?',
{\it Notices Amer. Math. Soc.\/},  {\bf 56} (2009), 817--818. 

\bibitem{FrIw}
J.~B.~Friedlander and H.~Iwaniec, {\it Opera de cribro}, 
Colloquium Publications~{\bf 57}  American Math. Soc., Providence, RI., 2010.
   

\bibitem{Go} 
D. Goldfeld,  `Gauss's class number problem for imaginary quadratic fields', 
{\it Bull. Amer. Math. Soc.\/},  {\bf 13} (1985),  23--37. 

\bibitem{GrOno} M. Griffin and K. Ono, `Elliptic curves and lower bounds for class numbers', 
{\it J. Number Theory\/}, {\bf 214} (2020) 1--12.  

\bibitem{GOT} M. Griffin,  K. Ono and W.-L. Tsai, `Quadratic twists of elliptic curves and class numbers', 
 \textit{Preprint}, 2020,   available at \url{https://arxiv.org/abs/2006.01063}.  

\bibitem{HB} D. R. Heath-Brown, `Prime numbers in short intervals and a generalized Vaughan
  identity', {\it Canadian J. Math.\/}, {\bf 34} (1982), 1365--1377.

\bibitem{HB2} D. R. Heath-Brown, `Prime twins and {S}iegel zeros',
{\it Proc. London Math. Soc.\/},  {\bf 47} (1983), 193--224.

\bibitem{HB3} D. R. Heath-Brown, `A mean value estimate for real character sums', 
{\it Acta Arith.\/}, {\bf 72} (1995), 235--275.

\bibitem{Hom}
K. Homma, `On the discrepancy of uniformly distributed roots of qua- dratic congruences',
{\it J. Number Theory\/}, {\bf  128} (2008),   500--508. 

\bibitem{IwKow} H. Iwaniec and E. Kowalski,
{\it Analytic number theory\/}, Amer.  Math.  Soc.,
Providence, RI, 2004.

\bibitem{KM} E. Kowalski and   P.  Michel,
 `Zeros of families of automorphic {$L$}-functions close to $1$',
{\it Pacific J. Math.\/}, {\bf 207} (2002), 411--431. 

\bibitem{KMS1} E. Kowalski,  P.  Michel and  W. Sawin,
 `Bilinear forms with Kloosterman sums and applications',
{\it Annals  Math.\/}, {\bf 186} (2017), 413--500.

\bibitem{KMS2} E. Kowalski,  P.  Michel and  W. Sawin,
 `Bilinear forms with generalized Kloosterman sums',
{\it  Ann. Scuola Normale Pisa\/},  (to appear).

\bibitem{KN}
L. Kuipers and H. Niederreiter, {\it Uniform distribution of
sequences\/}, Wiley-Interscience, New York-London-Sydney, 1974.

\bibitem{LO} J. C. Lagarias  and  A. M. Odlyzko,
`Effective versions of the Chebotarev density theorem',
{\it Algebraic Number Fields\/}, Academic Press,
New York,  1977,  409--464. 

\bibitem{LiuMas}
S.-C. Liu and R. Masri, `Hybrid bounds for quadratic Weyl sums and arithmetic applications',
{\it Forum Math.\/}, {\bf  27} (2015),  3397--3423. 

\bibitem{MoVa}
H. L. Montgomery and R. C. Vaughan, 
{\it  Multiplicative number theory I:
Classical theory\/},  Cambridge Univ. Press, Cambridge, 2006.

\bibitem{Nor} K. Norton, `A character-sum estimate and applications', 
{\it Acta Arith.\/}, {\bf 85}  (1998), 51--78. 

 \bibitem{Po2} P. Pollack, `Bounds for the first several prime character nonresidues',
 {\it Proc. Amer. Math. Soc.\/},   {\bf 145} (2017), 2815--2826. 
 
\bibitem{Sal} H. Sali{\'e}, `\"Uber die Kloostermanschen Summen $S(u,v;q)$',  
{\it Math. Zeit.\/}, {\bf 34} (1932),  91--109.  

\bibitem{SA} P. Sarnak, {\it Some applications of modular forms\/}, 
Cambridge Tracts in Math., vol.~99, Cambridge Univ. Press, Cambridge, 1990. 

\bibitem{Shk}
I. D. Shkredov,  `Modular hyperbolas and bilinear forms of Kloosterman sums',  
 \textit{Preprint}, 2019,  
available at \url{https://arxiv.org/abs/1905.00291}. 

\bibitem{Shp} 
I. E. Shparlinski, `On short products of primes in arithmetic progressions', {\it Proc. Amer. Math. Soc.} {\bf 147} (2019),
 977--985.

\bibitem{TaoVu}
T. Tao and V. Vu, {\it Additive combinatorics\/}, Cambridge Stud.
Adv. Math., {\bf 105}, Cambridge Univ. Press, Cambridge, 2006.

\bibitem{To}
A. T{\'o}th, `Roots of quadratic congruences', 
{\it  Int. Math. Res. Not.\/}, {\bf 14} (2000), 719--739. 

\bibitem{TZam} J. Thorner and A. Zaman,  
`An explicit bound for the least prime ideal in the {C}hebotarev
  density theorem', {\it Algebra Number Theory\/}, {\bf 11} (2017), 1135--1197. 
  
  \bibitem{TSc} N. Tschebotareff, `Die Bestimmung der  Dichtigkeit einer Menge von Primzahlen, welche zu einer gegebenen 
  Substitutionsklasse geh\"{o}ren',   {\it Math. Ann.}, {\bf 95},  (1926), 191--228. 

\bibitem{LV} A. I. Vinogradov and J. V. Linnik,  `Hypoelliptic curves and the least prime quadratic residue',
{\it Dokl. Akad. Nauk SSSR\/}, {\bf 168},  (1966), 259--261.

\bibitem{We} A. Weiss,  `The least prime ideal',
{\it J. Reine Angew. Math.\/}, {\bf 338} (1983), 56--94. 

\bibitem{Zac} R, Zacharias,  `Simultaneous non-vanishing for Dirichlet $l$-functions',
{\it Annales de l'Institut Fourier\/}, (to appear). 

\bibitem{Zam2} A. Zaman,  {\it Analytic estimates for the {C}hebotarev {D}ensity {T}heorem and
  their applications\/}, (Ph.D. Thesis,  Univ. Toronto, ProQuest LLC, Ann Arbor, MI, 2017. 
  
  \bibitem{Zam3}  A. Zaman, `Primes represented by positive definite binary quadratic forms',
  {\it Quart. J. Math.\/}, {\bf 69} (2018), 1353--1386. 
\end{thebibliography}
\end{document}